\documentclass[11pt, twoside]{amsart}
\usepackage{amsmath,mathtools}
\usepackage[foot]{amsaddr}
\usepackage{amsmath, amsthm, amssymb, amsfonts, enumerate}
\usepackage{todonotes}
\usepackage[colorlinks=true,linkcolor=blue,urlcolor=blue]{hyperref}
\usepackage{dsfont}
\usepackage{color}
\usepackage{geometry}
\usepackage{todonotes}
\usepackage{epstopdf}
\usepackage{bbm}
\usepackage{geometry}
\usepackage[utf8]{inputenc}
\usepackage{bm}
\usepackage{amsfonts}
\usepackage{amsfonts}
\usepackage{textcomp}
\usepackage{amssymb}
\usepackage{float}
\usepackage{tikz}
\usepackage{epsfig}
\usepackage{amsmath}
\usepackage{stmaryrd}
\usepackage{mathrsfs}
\usepackage[english]{babel}
\usepackage{a4}
\usepackage{csquotes,nicefrac}
\usepackage{enumerate}
\usepackage[multiple]{footmisc}
\usepackage[most]{tcolorbox}

\usepackage{soul}
\newcommand{\stkout}[1]{\ifmmode\text{\sout{\ensuremath{#1}}}\else\sout{#1}\fi}
\usepackage[capitalise,nameinlink]{cleveref} 
\crefname{enumi}{item}{items}
\crefname{equation}{}{}
\crefname{subsection}{Subsection}{Subsections}

\theoremstyle{definition}
\newtheorem{theorem}{Theorem}[section]

\newtheorem{remark}[theorem]{Remark}
\newtheorem{hypothesis}[theorem]{Assumption}
\newtheorem{lemma}[theorem]{Lemma}

\newtheorem{coroll}[theorem]{Corollary}
\newtheorem{definition}[theorem]{Definition}

\newtheorem{example}[theorem]{Example}

\usepackage{imakeidx}

\usepackage[backend=biber, style=numeric,maxnames=4,
  minnames=4, maxcitenames=4,
  mincitenames=4]{biblatex}
\def \E{\mathsf{E}}

\DeclareMathOperator*{\Lip}{\operatorname{Lip}}

\newcommand{\bk}[1]{\llbracket#1\rrbracket}
\DeclareMathOperator*{\esssup}{\operatorname{esssup}}
\DeclareMathOperator*{\essinf}{\operatorname{essinf}}

\definecolor{red}{rgb}{1.0,0.0,0.0}

\definecolor{blu}{rgb}{0.0,0.0,1.0}

\definecolor{gre}{rgb}{0.03,0.50,0.03}

\usepackage[T1]{fontenc}
\title{Duality for Stochastic Control  with non-Markovian Random Coefficients}

\date{\today}
\subjclass[2020]{%
Primary: Primary 93E20; Secondary 49N15, 60L20
}
\keywords{stochastic optimal control, random coefficients, non-Markovian control, duality, rough stochastic differential equations, stochastic Hamilton–Jacobi–Bellman equations.
}

\numberwithin{equation}{section}

\makeatletter
\newcommand{\authorsym}[1]{\textsuperscript{\@fnsymbol{#1}}}
\makeatother

\title{Duality for Stochastic Control  with non-Markovian Random Coefficients}

\author{Peter Bank\authorsym{1}}
\email{bank@math.tu-berlin.de}

\author{Jannis R. Dause\authorsym{1}}
\email{dause@math.tu-berlin.de}

\author{Filippo de Feo\authorsym{1}}
\email{defeo@math.tu-berlin.de}

\author{Peter K. Friz\authorsym{1}\textsuperscript{,}\authorsym{2}}
\email{friz@math.tu-berlin.de}

\thanks{\authorsym{1} Institut für Mathematik,
Technische Universität Berlin, Berlin, Germany.}

\thanks{\authorsym{2} Weierstraß-Institut,
Berlin, Germany.}

\begin{document}

\begin{abstract}We develop novel duality methods for stochastic optimal control problems under two sources of randomness and  non-Markovian random coefficients adapted to just one of them. The Hamilton--Jacobi--Bellman (HJB) equation is a second-order backward
stochastic partial differential equation.
Our  duality theory provides an alternative description of the random value function in terms of  a suitable pathwise optimal control problem parameterized by the realizations of one of the Brownian motions. This allows us to regain Markovianity using the theory of rough optimal control problems, leading to rough second order HJB equations that we solve in a suitable viscosity sense.
\end{abstract}

\maketitle

\section{Introduction}
Consider a (non-Markovian) stochastic control problem, with random value function
\begin{equation}
\label{intro:stochastic_control_prob}
    V_t(x)\coloneqq \essinf_{\pi }\mathbb E\left[\int_t^T l_s(X_s^{t,x,\pi},\pi_s) ds+g(X_T^{t,x,\pi})\bigg| \mathfrak{F}_t^W\right],
\end{equation}
and controlled doubly-stochastic dynamics given by
\begin{equation*}
    dX_s^{t, x, \pi}=b_s(X_s^{t, x, \pi},\pi_s)ds+\sigma_s (X_s^{t, x, \pi},\pi_s)dB_s+\gamma_s(X_s^{t, x, \pi},\pi_s)dW_s,\quad X_t^{t,x,\pi}=x,
\end{equation*}
where $W, B$ are two independent Brownian Motions, the coefficients depend predictably on  $W$ (but not on $B$) and $\pi$ is an admissible control. When the coefficients do not depend on $W$ either, the control problem \eqref{intro:stochastic_control_prob} reduces to the classical Markovian stochastic control setup and  can thus be treated with  Bellman's \textit{dynamic programming principle} (DPP) and the standard \textit{Hamilton-Jacobi-Bellman} (HJB) equation. The latter can be treated by the classical theory of \textit{viscosity solutions} for PDEs due to Lions-Crandall.

However, when  the coefficients depend on $W$, the problem becomes significantly more involved and was investigated by \cite{peng92,qiu2018}. A dynamic representation of \eqref{intro:stochastic_control_prob} is given as the solution (in some suitable sense) of a \textit{stochastic Hamilton--Jacobi--Bellman equation} (SHJB), i.e. a backward  stochastic partial differential equation (BSPDE) of the form
\begin{equation}
\label{intro:BSPDE}
 -dv_{t}(x)=-H_{t}\left(x,D (\mathfrak{d}_{\omega} v_t)(x),D v_t(x), D^2 v_t(x)\right)dt-\mathfrak{d}_{\omega} v_{t}(x) dW_{t}; \quad v_{T}= g .
\end{equation}
Here $Dv_t(x),D^2v_t(x)$ denote the standard derivatives in the variable $x$, while $\mathfrak{d}_{\omega} v_{t}(x)$ denotes the martingale coefficient in the semimartingale decomposition of the random field \(v_t(x)\), i.e. $
v_t( x)=v_T(x)-\int_t^T \mathfrak{d}_s v_s(x) d s-\int_t^T \mathfrak{d}_\omega v_s( x) d W_s.
$ For the study via smooth and weak solutions of the SHJB equation we refer to \cite{peng92}, while, for the characterization via viscosity solutions, we refer to  \cite{qiu2018}\footnote{These are distinct from the stochastic viscosity solutions à \cite{lions_fully_1998}}\textsuperscript{,}\footnote{See also \cite{qiu_stochastic_2023} for extensions to stochastic Hamilton-Jacobi-Bellman-Isaacs equations and \cite{qiu-yang} for extensions to infinite-dimensional state spaces.}.

 We remark that, due to possibly irregular dependence on $W,$ the control problem \eqref{intro:stochastic_control_prob} and the BSPDE \eqref{intro:BSPDE} are not covered  by the  frameworks of path-dependent PDEs (e.g. \cite{ekren2016viscosity,ekren2016viscosityII}) or path-dependent optimal control problems that can be treated via Markovian liftings and stochastic control in Hilbert spaces (e.g. \cite{bolli_defeo,defeo-fed-sw,fabbri2017stochastic} and the references therein). 

\paragraph{\textbf{Motivations.}} The complex analytic nature of SHJB \eqref{intro:BSPDE} is directly reflected in its numerical analysis, as there are no known efficient numerical schemes to solve it.  For instance, efficient numerical schemes for viscosity solutions are  open challenges even in the classical Markovian case (in this case, the SHJB equation becomes a classical PDE), when the dimension of the state space $d$ is high. The backward non-Markovian nature of such SPDE, adds further challenges.
As a close comparison, we refer to the recent paper \cite{cohen-defeo-hebner-sirignano} (and references therein) for the challenges faced by numerical methods for the easier setting of deterministic HJB equations on Hilbert spaces related to the previously mentioned path-dependent control problems via Markovian lifts: however, even these methods cannot numerically treat the SHJB equation and the related control problem \eqref{intro:stochastic_control_prob}. 

In view of this, it is crucial to investigate alternative approaches, leading to a deeper understanding of the theory and, eventually, to efficient numerical schemes. We start by observing that, in classical settings, pathwise approaches to dualize stochastic control problems are able to provide tight lower bounds on the value function, typically leading to efficient numerical schemes (see detailed literature review below). In these approaches, one writes the value function in terms of suitable pathwise optimal control problems involving penalization that have to be optimized to match the problem value.

\paragraph{\textbf{Our contributions.}} Under these motivations, in this work we provide an  approach for the stochastic control problem \eqref{intro:stochastic_control_prob} via a novel \textit{duality theory} (see \cref{sec:duality}), obtained by leveraging the analytic theory \cite{peng92,qiu2018}. In particular, the duality result provides sharp lower bounds for  \eqref{intro:stochastic_control_prob} in terms of a suitably penalized anticipative control problem. Such characterization is given both when $V$ is a smooth solution of the (SHJB) (\cref{th:duality_smooth_semimartingale}) and when this smoothness property is not assumed (\cref{th:duality_stoch_viscosity}) (inspired by the theory of viscosity solutions \cite{qiu2018}). 

Using extensions of known results from \textit{rough stochastic differential equations} (RSDEs)\cite{fhl21} the anticipative control problem  can  be identified with a rough stochastic, but Markovian control problem, which is subsequently analyzed using DPPs \cite{flz26} and path-dependent rough HJB equations, for which we establish well-posedness results under minimal regularity assumptions. The latter uses a novel approach based on the good rough paths of \cite{coutin_2007} which may be of independent interest for the analysis of path-dependent rough PDEs.  
 We also provide a conditional duality result in the standard Markovian case (\cref{sec: duality_markvian_appendix}). This   allows us to apply the arguments of the paper in the standard case, without the restrictions due to the presence of  random coefficients, and still generalizes \cite{DiehlFrizGassiat2017,bank_duality_2026}.
\paragraph{\textbf{State of the Art.}}
\textit{Duality methods} have been investigated in the context of standard optimal stopping \cite{DavisKaratzas1994, Rogers2002, bayer_primal_2025, ye-wong}, discrete-time stochastic control \cite{rogers_2007}, optimal switching \cite{ye-wong}, and standard (continuous-time) stochastic control \cite{Wets75,burstein_davis,DiehlFrizGassiat2017,henry-litterer-ren,lauriere2026, bank_duality_2026}. In particular, the robustness of the lower bounds provided by these methods  make them suitable for the development of efficient numerical schemes \cite{rogers_2007,henry-litterer-ren,lauriere2026,ye-wong}.

Pathwise stochastic control problems have been studied in \cite{burstein_davis, lions_fully_1998, buckdahn_ma_07}. In recent years pathwise control problems have also been intensively studied in the field of rough path analysis in particular for applications, e.g., in robust filtering \cite{allan_cohen_2020}, reinforcement learning \cite{chakraborty_pathwise_2024, ashkarian_pontryagin_2026}, anticipative stochastic control \cite{bank_causal_2025} and model-free portfolio theory \cite{allan_modelfree_2023}.

The theory of rough stochastic analysis arguably initiated by the first intrinsic well-posedness theory of RSDEs of \cite{fhl21} has rapidly proven to be a useful tool for applications in stochastic filtering \cite{allan_rough_2025, bugini_rough_2025}, McKean-Vlasov SDEs \cite{friz_mckean-vlasov_2025}, nonlinear PDEs \cite{bugini_nonlinear_2025}, stochastic numerics \cite{dause_controlled_2026}, option pricing \cite{bank_rough_2025}, mean-field games \cite{friz_mean-field_2026, bayraktar_mean-field_2026} and path-dependent stochastic control \cite{flz26, horst_pontryagin_2025}. In particular, we emphasize the high influence of the work \cite{flz26} on \cref{section_rough_HJB}. 
\paragraph{\textbf{Outline.}} In \cref{sec:duality}, we provide a duality theory for the optimal control problem \eqref{intro:stochastic_control_prob}. In \cref{section_rough_HJB}, we address the anticipative control problem via rough optimal control methods. 
In \cref{notations}, we collect some notations used throughout the whole paper. In \cref{sec: duality_markvian_appendix}, we  simplify the setting of the paper to doubly controlled \emph{Markovian} SDEs and we provide a duality theory in this case. In \cref{measure_theory}, we collect results from measure theory used throughout the work. In \cref{appendix_section_rsde_and_control}, we give a minimal exposition to \textit{rough stochastic analysis} and \textit{rough stochastic differential equations}(RSDEs) that we use in the paper. In \cref{kunita}, we state the It\^{o}-Kunita-Wentzell formula for smooth semimartingale fields.

\paragraph{\textbf{Acknowledgments.}} The authors are very grateful to Jinniao Qiu for helpful comments related to the present paper. JRD acknowledges funding by the Deutsche Forschungsgemeinschaft (DFG, German Research Foundation) - Project-ID 410208580 - IRTG2544 (“Stochastic Analysis in Interaction”). PB,  FdF, PKF acknowledge funding by the Deutsche Forschungsgemeinschaft (DFG, German Research Foundation) – CRC/TRR 388 ``Rough Analysis, Stochastic Dynamics and Related Fields'' – Project ID 516748464. 
\section{Duality for random coefficients}\label{sec:duality}
In this section, we introduce the optimal control problem and build our duality theory.\\
Let $m \in \mathbb{N}_{\geq 2}$ with $m=m_{B}+ m_{W}$ for $ m_{B}, m_{W}\in \mathbb{N}$.  Let $\Omega = C([0,T]; \mathbb{R}^{m})$ and $(B, W)$ the $m_{B}+m_{W}$-dimensional canonical process on the Wiener space $(\Omega, \mathfrak{F}, (\mathfrak{F}_{t})_{t \in [0,T]}, \mathbb{P})$, where $\mathfrak{F}=\mathfrak{B}(\Omega)$ and  $(\mathfrak{F}_{t})_{t \in [0,T]}$ is the augmented Brownian filtration of $(B, W)$. Then $B$ and $W$ are respectively $m_{B}, m_{W}$-dim. independent Brownian Motions on the above probability space with their respective augmented  filtrations given by $(\mathfrak{F}^{B}_{t})_{t\in [0,T]}$, $(\mathfrak{F}^{W}_{t})_{t \in [0,T]}$.
We consider a controlled SDE 
\begin{equation}
\label{eq:controlled_doubly_sde}
    dX_s^{t, x, \pi}=b_s(X_s^{t, x, \pi},\pi_s)ds+\sigma_s (X_s^{t, x, \pi},\pi_s)dB_s+\gamma_s(X_s^{t, x, \pi},\pi_s)dW_s , \quad X_t^{t, x, \pi}=x\in \mathbb R^d, 
\end{equation}
where $\pi\in \mathcal U=\{\pi \colon [0, T]\times \Omega \to U \textit{ is }(\mathfrak{F}_s) \textit{-predictable} \}$ are admissible controls, $U \subset \mathbb R^h$ compact, and $b: [0, T] \times \Omega \times \mathbb{R}^d \times U \rightarrow \mathbb{R}^d$, $\sigma: [0,T]\times \Omega \times \mathbb{R}^d \times U \rightarrow \mathbb{R}^{d \times m_{B}}$, $\gamma: [0, T] \times \Omega \times \mathbb{R}^d \times U \rightarrow \mathbb{R}^{d \times m_{W}}$ will be predictable with respect to $(\mathfrak{F}^W_s)$, in a sense specified  below. Denote  $\Sigma=[\sigma,\gamma]:[0, T] \times \Omega \times \mathbb{R}^d \times U \rightarrow \mathbb{R}^{d \times m}$, with $m=m_{W}+m_{B}$.

Consider the following optimal control problem with random value function
\begin{align*}
V_t(x)(\omega)&:=\essinf_{\pi \in \mathcal U}\mathbb E\left[\int_t^T l_s(X_s^{t,x,\pi},\pi_s) ds+g(X_T^{t,x,\pi})\bigg| \mathfrak{F}_t\right](\omega)\\
&=\essinf_{\pi \in \mathcal U}\mathbb E\left[\int_t^T l_s(X_s^{t,x,\pi},\pi_s) ds+g(X_T^{t,x,\pi})\bigg| \mathfrak{F}_t^W\right](\omega),
\end{align*}
where $l:  [0, T] \times \Omega \times \mathbb{R}^d \times U \rightarrow \mathbb{R}$, $g: \Omega \times \mathbb{R}^d \rightarrow \mathbb{R}$ are $(\mathfrak{F}^{W}_t)$-predictable (resp. $\mathfrak{F}^W_T$-measurable).
Throughout this section we will always work under the following assumption:
\begin{hypothesis}\label{hp:regularity_coeff}Let 
$g \in L^{\infty}\left(\Omega ; H^{1, \infty}\right)$. For the coefficients $z=b^i, \Sigma^{i j},l ,$ for $1 \leq i \leq d, 1 \leq j \leq m$, we assume that
\begin{enumerate}
    \item $z$ is $\mathfrak{P}^W \otimes \mathfrak{B}\left(\mathbb{R}^d\right) \otimes \mathfrak{B}(U)$-measurable;
    \item for $dt \otimes \mathbb{P}$-almost all $(t, \omega),$ $ z_t( x, \pi)$ is uniformly continuous on $\mathbb{R}^d \times U$;
    \item there exists $L>0$ such that
    $\|g\|_{L^{\infty}\left(\Omega; H^{1, \infty}\right)}+\sup _{\pi \in \mathcal{U}}\|z_{(\cdot)}( \cdot, \pi)\|_{\mathcal{L}^{\infty}\left(H^{1, \infty}\right)} \leq L$.
\end{enumerate}
\end{hypothesis}
Under \cref{hp:regularity_coeff}, \eqref{eq:controlled_doubly_sde} and $V$ are well-posed  \cite[Section 3]{qiu2018}.

\subsection{The smooth setting}
Throughout this section we will work under the following assumption (recall  \cref{def:smooth-semimartingales}):
\begin{hypothesis}
    \label{ass:peng}
    $V \in \mathcal{C}^{2}_{\mathfrak{F}}$ is a continuous semimartingale field of the form
\begin{align}
V_t(x)=g(x)-\int_t^T \mathfrak{d}_{s} V_s(x) d s-\int_t^T \mathfrak{d}_{\omega} V_s(x) d W_s,\quad t\in [0,T],x\in \mathbb R^d
\end{align}
and the processes $\mathfrak{d}_{t} V,\mathfrak{d}_{\omega} V$ are  continuous in $(t,x)$, $\mathbb P$-a.s.
\end{hypothesis}
The random value function leads us 
to consider the following stochastic Hamilton-Jacobi-Bellman (SHJB) equation: 
$$
-d v_t(x)=H_t\left(x,D \mathfrak{d}_{\omega} v_t(x),D v_t(x), D^2 v_t(x)\right) d t-\mathfrak{d}_{\omega} v_t(x) d W_t, \quad v_T(x)=g(x),
$$
i.e. 
$$v_t(x)=g(x)+\int_t^T H_s\left(x,D \mathfrak{d}_{\omega} v_s(x),D v_s(x), D^2 v_s(x)\right) d s-\int_t^T \mathfrak{d}_{\omega} v_s(x) d W_s, $$
where
$$
\begin{aligned}
&H_t(x, q,p,A ):=\inf _{u \in U}H^{\text{cv}}_t(x, q,p,A ,u),\\
&H^{\text{cv}}_t(x,q,p,A ,u):=\gamma_t(x,u)\cdot q+b_t(x,u)\cdot p+l_t(x,u)+\operatorname{tr}[a_t(x,u)A],
\end{aligned}
$$
with 
$
a_t(x,u)=\frac{1}{2}\Sigma_t (\Sigma_t)^{\top
}(x, u)=\frac{1}{2}\left[\sigma_t (\sigma_t)^{\top}(x, u)+\gamma_t (\gamma_t)^{\top}(x, u)\right]$. 

Proceeding as in \cite[Section 3]{peng92},
    under \cref{ass:peng}, we have
\begin{align}\label{eq.consequence_shjb}
\mathfrak{d}_{s} V_{s}(x)=-H_s\left(x,D (\mathfrak{d}_{\omega} V_s)(x),D V_s(x), D^2 V_s(x)\right).
\end{align}
Comparing  \cref{ass:peng} with (SHJB), it follows that the couple  $\left(V_t(x), \mathfrak{d}_{\omega} V_t(x)\right)$ satisfies (SHJB).

For the subsequent results, we introduce the following notations. The It\^{o}-Kunita-Wentzell formula allows us to associate with any $h \in \mathcal C^2_\mathfrak{F}$, the $(\mathfrak F_t)$-martingale $M^{t,x,\pi,h}$ given by
\begin{align*}
M_{t,\tau}^{t,x,\pi,h} &:=h_\tau(X_\tau^{t,x,\pi})-h_t(x)-\int_t^\tau\big[h_s^0(X_s^{t,x,\pi})+b_s(X_s^{t,x,\pi},\pi_s)  D h_s(X_s^{t,x,\pi})\\
&\quad + \operatorname{tr} \left( a_s(X_s^{t,x,\pi},\pi_s) D^2 h_s(X_s^{t,x,\pi}) \right) + \gamma_s(X_s^{t,x,\pi},\pi_s)  Dh_s^1(X_s^{t,x,\pi},\pi_s)\big] ds\\
&=\int_t^\tau Dh_s(X_s^{t,x,\pi}) \sigma_s(X_s^{t,x,\pi},\pi_s) dB_s+ \int_t^\tau\left[ h_s^1(X_s^{t,x,\pi}) + Dh_s(X_s^{t,x,\pi})\gamma_s(X_s^{t,x,\pi},\pi_s) \right]dW_s,
\end{align*}
Moreover, we set
\begin{align}
& V^{1,h}_t(x)\coloneqq \mathbb E\Big[\essinf_{\pi \in {\mathcal U}} \mathbb E\left[ \int_t^T l_s(X_s^{t,x,\pi},\pi_s) ds+g(X_T^{t,x,\pi}) - M_{t,T}^{t,x,\pi,h} \bigg| \mathfrak{F}_T^W\right] \mathfrak{F}_t^W \Bigg]\nonumber\\
& \quad\quad\quad\quad = h_t(x) + \mathbb{E}\left[\essinf_{\pi \in {\mathcal U}}\mathbb E\left[ \int_t^T\big[ \mathfrak{d}_{s} h_{s}(X^{t,x, \pi}_{s})\right. \right. \nonumber \\
& \quad \quad\quad\quad+\left. \left. H^{\text{cv}}_s(X^{t,x, \pi}_{s},D (\mathfrak{d}_{\omega}h_s)(X^{t,x, \pi}_{s}),D h_s(X^{t,x, \pi}_{s}) ,D^2 h_s(X^{t,x,\pi}_{s}),\pi_s)\big]ds+g(X_T^{t,x,\pi})-h_T(X_T^{t,x,\pi})    \bigg| \mathfrak{F}_T^W\right] \bigg| \mathfrak{F}_t^W \right], \label{eq:duality_th}\\
& V^{2,h}_t(x)\coloneqq h_t(x) +\mathbb E\bigg[ \int_t^T  \essinf_{y}\left[\mathfrak{d}_{s}h_{s}(y)+H_s\left(y,D( \mathfrak{d}_{\omega}h_{s})(y),D h_s(y), D^2 h_s(y)\right) \right]ds \nonumber \\
&\quad \quad\quad\quad\quad \quad \quad\quad\quad\quad\quad \quad\quad\quad \quad\quad +\essinf_{y}\left[g(y)-h_T(y) \bigg] \bigg| \mathfrak{F}_t^W  \right],\nonumber 
\end{align}
for all $h \in \mathcal C^2_\mathfrak{F}$.

We are now ready to state our first duality result.
\begin{theorem}[Duality, smooth setting]\label{th:duality_smooth_semimartingale}    Let Assumptions \ref{hp:regularity_coeff} and \ref{ass:peng} hold. Then almost surely
\begin{align}
\label{eq:duality_smooth_case}
V_t(x)&=\max_{h \in \mathcal{H}}  V^{1,h}_t(x)=\max_{h \in \mathcal{H}}  V^{2,h}_t(x),
\end{align} where $\mathcal{H}:= \{ h \in \mathcal{C}^{2}_{\mathfrak{F}} : h_{T}= g \}$.
\end{theorem}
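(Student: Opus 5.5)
We argue in two parts: a weak duality inequality valid for every test field, and attainment of the maximum at $h=V$. The main tool is the It\^{o}--Kunita--Wentzell formula for semimartingale fields (\cref{kunita}), which identifies the two expressions for $V^{1,h}$ in \eqref{eq:duality_th}.

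\emph{Weak duality.} Fix $h\in\mathcal H$. First we show $V^{2,h}_t(x)\le V^{1,h}_t(x)$. Inside the conditional expectation given $\mathfrak F_T^W$, bound the integrand pointwise in the state:
\[
\mathfrak d_s h_s(X_s)+H^{\mathrm{cv}}_s(X_s,\dots,\pi_s)\ge \essinf_y\big[\mathfrak d_s h_s(y)+H_s(y,\dots)\big],
\]
and similarly $g(X_T)-h_T(X_T)\ge \essinf_y[g-h_T]$. The right-hand sides are $\mathfrak F^W$-measurable, so they pass through $\mathbb E[\cdot|\mathfrak F^W_T]$ and through the $\essinf_\pi$. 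Taking $\mathbb E[\cdot|\mathfrak F_t^W]$ then gives the inequality. Some care is needed to make the infimum over $y$ measurable; continuity in $y$ lets us reduce it to a countable infimum.

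Next we show $V^{1,h}_t(x)\le V_t(x)$. For each $\pi\in\mathcal U$,
\[
\essinf_{\pi'}\mathbb E[J^{\pi'}-M^{\pi',h}_{t,T}\mid\mathfrak F_T^W]\le \mathbb E[J^\pi-M^{\pi,h}_{t,T}\mid\mathfrak F_T^W],
\]
where $J^\pi$ denotes the cost. Taking $\mathbb E[\cdot|\mathfrak F^W_t]$ and using the tower property gives $\mathbb E[J^\pi|\mathfrak F_t]$ conditioned on $\mathfrak F^W_t$, minus $\mathbb E[M^{\pi,h}_{t,T}|\mathfrak F_t^W]$. The latter vanishes because $M$ is an $(\mathfrak F_s)$-martingale started at $0$ at time $t$; this requires true martingality, which follows from the integrability contained in the definition of $\mathcal C^2_\mathfrak F$ together with the bounded coefficients of \cref{hp:regularity_coeff}. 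We then take the essential infimum over $\pi$. Since the conditional expectations $\mathbb E[J^\pi|\mathfrak F_t]$ coincide with those given $\mathfrak F^W_t$, as noted in the definition of $V$, we obtain $V^{1,h}\le V$. Hence $V^{2,h}\le V^{1,h}\le V$ for all $h\in\mathcal H$.

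\emph{Attainment.} Take $h=V$, which lies in $\mathcal H$ by \cref{ass:peng}. By \eqref{eq.consequence_shjb}, $\mathfrak d_sV_s(y)+H_s(y,D\mathfrak d_\omega V_s,DV_s,D^2V_s)=0$ for all $y$, and $g-V_T=0$. Therefore $V^{2,V}_t(x)=V_t(x)$. Combined with the chain of inequalities, this forces $V=V^{1,V}=V^{2,V}$, so both suprema are maxima attained at $h=V$.

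\emph{Main obstacle.} The delicate step is the null-set bookkeeping when exchanging $\essinf_\pi$ with conditional expectations, and in particular justifying that $\mathbb E[M_{t,T}|\mathfrak F^W_t]=0$ with $\pi$ depending on both $B$ and $W$. For this I would use the independence of $B$ and $W$ and the tower property through $\mathfrak F_t$, since $\mathfrak F_t^W\subset\mathfrak F_t$. I would also invoke \eqref{eq.consequence_shjb} only after fixing a version of the fields that is continuous in $(t,x)$, so that the pointwise identity holds for all $y$ simultaneously, almost surely.
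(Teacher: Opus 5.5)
Your proof is correct and is essentially the paper's argument. The paper starts from $V_t(x)$, applies the It\^{o}--Kunita--Wentzell formula (the martingale part drops out under $\mathbb E[\cdot\,|\,\mathfrak F^W_t]$), inserts $\mathbb E[\cdot\,|\,\mathfrak F^W_T]$ by the tower property, and pulls the essential infimum inside to get $V_t(x)\ge V^{1,h}_t(x)\ge V^{2,h}_t(x)$; it then uses \eqref{eq.consequence_shjb} at $h=V$, which is your chain of inequalities read in the opposite direction. One harmless wording slip: for a fixed $\pi$ depending on $B$, $\mathbb E[J^\pi|\mathfrak F_t]$ and $\mathbb E[J^\pi|\mathfrak F^W_t]$ need not coincide (only their essential infima over $\pi$ do), but you do not need this, since the tower property already gives $\mathbb E[J^\pi|\mathfrak F^W_t]$ and $V_t(x)=\essinf_{\pi\in\mathcal U}\mathbb E[J^\pi|\mathfrak F^W_t]$ by definition.
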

The strength of this theorem is that it does not merely give an abstract reformulation of the value function, but produces sharp dual bounds for it: for every sufficiently smooth test semimartingale $h$, the corresponding dual functional gives a pathwise optimization problem in which the $W$-noise is frozen and the martingale correction compensates for the enlarged, potentially anticipative information. This yields a rigorous bound on $V_t(x)$, while the choice $h=V$ makes the bound exact, so there is no duality gap. Of course, the choice $h=V$ is only possible when the value function is smooth in the sense that $V \in \mathcal H$. However,  in the next subsection, we will also provide a duality result  under non-smoothness of $V$.
In any case, when the true value function is unavailable, one can approximate it by tractable test functions $h$ and obtain bounds that improve as the approximation improves. In this sense, the theorem transforms the difficult (SHJB) problem into a family of pathwise control problems whose optimal dual element recovers the true value. We will analyze the pathwise control problem more closely in \cref{section_rough_HJB}. 
\begin{proof}
For all $h \in \mathcal{H}$, and using the It\^{o}-Kunita-Wentzell formula (\cref{lemma:It\^{o}-kunita}), we have almost surely
\begin{align*}
V_t(x)&=\essinf_{\pi \in {\mathcal U}} \mathbb E\left[\int_t^T l_s(X_s^{t,x,\pi},\pi_s) ds+h_T(X_T^{t,x,\pi})  \bigg| \mathfrak{F}_t^W\right]\\
&=h_t(x)+ \essinf_{\pi \in {\mathcal U}} \mathbb E\bigg[ \int_t^T \big[\mathfrak{d}_{s}h_{s}(X_s^{t,x,\pi})\\
&\quad \quad\quad\quad+  H^{\text{cv}}_s(X_s^{t,x,\pi},D (\mathfrak{d}_{\omega
}h)_s(X_s^{t,x,\pi}),D h_s(X_s^{t,x,\pi}) ,D^{2} h_s(X_s^{t,x,\pi}),\pi_s ) \big]  ds  \bigg| \mathfrak{F}_t^W  \bigg]\\
&=h_t(x)+ \essinf_{\pi \in {\mathcal U}}\mathbb E\bigg[  \mathbb E\bigg[ \int_t^T \big[ \mathfrak{d}_{s}h_{s}(X_s^{t,x,\pi}) \\
&\quad \quad\quad\quad +H^{\text{cv}}_s(X_s^{t,x,\pi},D (\mathfrak{d}_{\omega}h)_s(X_s^{t,x,\pi}),D h_s(X_s^{t,x,\pi}) ,D^2 h_s(X_s^{t,x,\pi}),\pi_s ) \big] ds \bigg| \mathfrak{F}_T^W\bigg] \bigg| \mathfrak{F}_t^W  \bigg]\\
&\geq  V^{1,h}_t(x)\geq  V^{2,h}_t(x), 
\end{align*}
where in the first inequality of the last line we moved the essinf inside the first conditional expectation, see \cref{cor:conditional_essinf} for a rigorous justification. Taking $h=V\in \mathcal C^2_\mathfrak{F}$ and using \eqref{eq.consequence_shjb} (which is a consequence of the (SHJB) equation), we have
$ V_t(x)= V^{2,V}_t(x)$ almost surely. 
The claim follows.
\end{proof}
\subsection{The non-smooth setting}
\label{section_duality_for_viscosity_solutions} In this subsection, inspired by \cite{qiu2018}, we relax the smoothness assumption on $V,$ i.e. \cref{ass:peng}.

 As in \cite{qiu2018}, we define the space of smooth pointwise sub-, super-solutions, respectively, by
 $$
\begin{aligned}
& \underline{\mathcal{S}}:=\left\{\phi \in \mathcal{C}_{\mathfrak{F}}^2: \phi^{+} \in \mathcal{S}^{\infty}(C_0(\mathbb{R}^d)), \phi_T(x) \leq g(x),\right. \\
& \quad \quad \quad \quad\quad \quad\quad  -\mathfrak{d}_t \phi_t(x)-H_t\left( x, D \mathfrak{d}_\omega \phi_t(x), D \phi_t(x), D^2 \phi_t(x)\right)\leq 0 \left. \quad (t, x) \in[0, T) \times \mathbb{R}^d\right\}, \\
&\overline{\mathcal{S}}:=\left\{\phi \in \mathcal{C}_{\mathfrak{F}}^2: \phi^{-} \in \mathcal{S}^{\infty}(C_0(\mathbb{R}^d)), \phi(T, x) \geq G(x),\right.\\
&\quad \quad \quad \quad\quad \quad\quad -\mathfrak{d}_t \phi_t(x)-H_t\left( x, D \mathfrak{d}_\omega \phi_t(x), D \phi_t( x), D^2 \phi_t( x)\right) \geq 0 \left. \quad (t, x) \in[0, T) \times \mathbb{R}^d\right\}.
\end{aligned}
$$
\begin{theorem}[Duality, non-smooth setting]\label{th:duality_stoch_viscosity}Let \cref{hp:regularity_coeff} hold. Let $t\in [0,T],x\in \mathbb R^d$ such that almost surely
\begin{equation}\label{eq:essup_phi_getV}
    \esssup_{\phi \in \underline{\mathcal{S}}} \phi_{t}(x) \geq V_{t}(x).
\end{equation}
Then, we have almost surely
\begin{align}
\label{eq:duality_viscosity}
V_t(x)&=\esssup_{h \in \mathcal C^2_\mathfrak{F}}  V^{1,h}_t(x)=\esssup_{h \in \mathcal C^2_\mathfrak{F}}  V^{2,h}_t(x).
\end{align}
\end{theorem}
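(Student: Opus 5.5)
The plan has two halves: an upper bound valid for every test field $h$, and a matching lower bound obtained from the subsolutions supplied by \eqref{eq:essup_phi_getV}.

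\emph{Upper bound.} Fix an arbitrary $h\in\mathcal C^2_{\mathfrak F}$ (no terminal constraint). The chain of (in)equalities in the proof of \cref{th:duality_smooth_semimartingale} goes through almost verbatim, with one change. Apply the It\^{o}--Kunita--Wentzell formula (\cref{lemma:It\^{o}-kunita}) to $h_s(X^{t,x,\pi}_s)$. Since $M^{t,x,\pi,h}_{t,\cdot}$ is a true martingale with $\mathbb E[M^{t,x,\pi,h}_{t,T}\mid\mathfrak F_t]=0$, we get for every $\pi\in\mathcal U$ that $\mathbb E[\int_t^T l_s\,ds+g(X_T)\mid\mathfrak F^W_t]$ equals $h_t(x)$ plus the conditional expectation of $\int_t^T[\mathfrak d_sh_s+H^{\text{cv}}_s(\dots,\pi_s)]ds+g(X_T)-h_T(X_T)$. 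The terminal mismatch $g-h_T$ is now kept rather than cancelled. Next, take $\essinf_\pi$ and move it inside the outer conditional expectation using \cref{cor:conditional_essinf}; this gives $V_t(x)\ge V^{1,h}_t(x)$. Bounding the integrand pointwise from below by $\essinf_y[\mathfrak d_sh_s(y)+H_s(y,\dots)]$ and $g-h_T$ by $\essinf_y[g(y)-h_T(y)]$ gives $V^{1,h}_t(x)\ge V^{2,h}_t(x)$. Hence
\[
V_t(x)\ \ge\ \esssup_{h} V^{1,h}_t(x)\ \ge\ \esssup_h V^{2,h}_t(x)\quad\text{a.s.}
\]
We need the integrability needed for the martingale property and for the conditional expectations to be well defined. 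I would check this from the definition of $\mathcal C^2_{\mathfrak F}$ (\cref{def:smooth-semimartingales}) together with the bounded coefficients of \cref{hp:regularity_coeff}. If $\mathcal C^2_{\mathfrak F}$ does not already guarantee it, I would restrict the suprema to the relevant subclass, which is harmless for the lower bound below.

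\emph{Lower bound.} Take $\phi\in\underline{\mathcal S}\subset\mathcal C^2_{\mathfrak F}$ as a test function. The subsolution inequality gives $\mathfrak d_s\phi_s(y)+H_s(y,D\mathfrak d_\omega\phi_s,D\phi_s,D^2\phi_s)\ge0$ for all $(s,y)$. The terminal condition $\phi_T\le g$ gives $\essinf_y[g(y)-\phi_T(y)]\ge0$. Substituting into the definition of $V^{2,\phi}$ yields $V^{2,\phi}_t(x)\ge\phi_t(x)$ almost surely. Therefore $\esssup_h V^{2,h}_t(x)\ge\esssup_{\phi\in\underline{\mathcal S}}\phi_t(x)\ge V_t(x)$, using \eqref{eq:essup_phi_getV}. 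Combined with the upper bound, all three quantities coincide a.s., which proves \eqref{eq:duality_viscosity}.

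\emph{Main obstacle.} I expect the delicate step to be measure-theoretic: the interchange of $\essinf_\pi$ with $\mathbb E[\,\cdot\mid\mathfrak F^W_t]$, and the pathwise $\essinf_y$ over $y\in\mathbb R^d$ in $V^{2,h}$. The latter must be a measurable process, so I would use continuity of $h$, $\mathfrak d h$ and the Hamiltonian in $y$ to reduce it to an infimum over a countable dense set. The boundedness $\phi^+\in\mathcal S^\infty(C_0)$ for elements of $\underline{\mathcal S}$ ensures these quantities are integrable from above, so the conditional expectations make sense. In the first step I would try invoking \cref{cor:conditional_essinf} exactly as in the smooth case, since that argument never used $h_T=g$.
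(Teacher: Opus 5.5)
Your proposal is correct and follows the paper's proof essentially step for step. You use the It\^{o}--Kunita--Wentzell rewrite while keeping the terminal mismatch $g-h_T$, and the interchange of $\essinf_\pi$ with $\mathbb E[\,\cdot\mid\mathfrak F^W_t]$ (as in \cref{cor:conditional_essinf}) then gives $V_t(x)\ge V^{1,h}_t(x)\ge V^{2,h}_t(x)$ for every $h\in\mathcal C^2_{\mathfrak F}$. You then close the chain with $V^{2,\phi}_t(x)\ge\phi_t(x)$ for $\phi\in\underline{\mathcal S}\subset\mathcal C^2_{\mathfrak F}$ together with \eqref{eq:essup_phi_getV}. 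The integrability you hedge about is already guaranteed by the definition of $\mathcal C^2_{\mathfrak F}$ and the bounded coefficients, so no restriction of the class of test fields is needed.
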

\begin{remark}
Under \cref{hp:regularity_coeff}, additional regularity assumptions on the coefficients and using viscosity solutions, the proof of \cite[Theorem 5.6]{qiu2018}  shows that\footnote{see Equation (5.5) there, together with the successive lines, and the end of the proof} $$\esssup_{\phi \in \underline{\mathcal{S}}} \phi_{t}(x) =V_{t}(x)=\essinf_{\phi \in \overline{\mathcal{S}}} \phi_{t}(x),\quad \text{for all } t\in [0,T],x\in \mathbb R^d$$ so that \eqref{eq:essup_phi_getV} is satisfied for all $t,x$.
\end{remark}
\begin{proof}[Proof of \cref{th:duality_stoch_viscosity}]
For all $h \in \mathcal C^2_\mathfrak{F}$ (recall \cref{def:smooth-semimartingales}), by applying the It\^{o}-Kunita-Wentzell formula, we have almost surely
\begin{align*}
V_t(x)&=\essinf_{\pi \in {\mathcal U}} \mathbb E\left[\int_t^T l_s(X_s^{t,x,\pi},\pi_s) ds+h_T(X_T^{t,x,\pi})+g(X_T^{t,x,\pi})-h_T(X_T^{t,x,\pi})  \bigg| \mathfrak{F}_t^W\right]\\
&=h_t(x)+ \essinf_{\pi \in {\mathcal U}} \mathbb E\bigg[ \int_t^T \big[\mathfrak{d}_{s} h_{s}(X_s^{t,x,\pi}) +  H^{\text{cv}}_s(X_s^{t,x,\pi},D (\mathfrak{d}_{\omega} h)_s(X_s^{t,x,\pi}),D h_s(X_s^{t,x,\pi}) ,D^{2} h_s(X_s^{t,x,\pi}),\pi_s )\big] ds  \\
&\quad +g(X_T^{t,x,\pi})-h_T(X_T^{t,x,\pi})  \bigg| \mathfrak{F}_t^W  \bigg]\\
&=h_t(x)+ \essinf_{\pi \in {\mathcal U}}\mathbb E\bigg[  \mathbb E\bigg[ \int_t^T \big[ \mathfrak{d}_{s}h_{s}(X_s^{t,x,\pi}) +H^{\text{cv}}_s(X_s^{t,x,\pi},D (\mathfrak{d}_{\omega}h)_s(X_s^{t,x,\pi}),D h_s(X_s^{t,x,\pi}) ,D^2 h_s(X_s^{t,x,\pi}),\pi_s )\big]ds \\
&\quad +g(X_T^{t,x,\pi})-h_T(X_T^{t,x,\pi})  \bigg| \mathfrak{F}_T^W\bigg] \bigg| \mathfrak{F}_t^W  \bigg]\\
&\geq \esssup_{h \in \mathcal C^2_\mathfrak{F}}  V^{1,h}_t(x)\geq \esssup_{h \in \mathcal C^2_\mathfrak{F}}  V^{2,h}_t(x)\geq \esssup_{h \in \underline{\mathcal S}} h_t(x)\geq V_t(x),
\end{align*}
where in the last line we have used the fact that $\mathcal C^2_\mathfrak{F} \supset \underline{\mathcal S}$, \eqref{eq:essup_phi_getV} (and  \cref{cor:conditional_essinf}).
The claim follows.
\end{proof}
\section{Anticipative stochastic control with random coefficients via rough paths}
\label{section_rough_HJB}
In this section, we show how to address the anticipative control problem associated with $V^{1,h}$ for a fixed $h\in \mathcal H$ (resp. $h\in \mathcal{C}^{2}_{\mathfrak{F}}$ in the context of \cref{th:duality_stoch_viscosity}) via rough optimal control methods.
\subsection{Anticipative stochastic control and RSDEs}
\label{subsection_the_anticipative_control_problem_and_randomization_of_RSDEs}
    Motivated by \eqref{eq:duality_smooth_case} and \eqref{eq:duality_viscosity}, consider the partially anticipative problem:   
    \begin{equation}
    \label{eq:path_dep_control_problem}
  \hat V_t(x):= \essinf_{\pi \in {\mathcal U}}\mathbb E\left[ \int_t^T \hat l_s(X_s^{t,x,\pi},\pi_s) ds +\hat g(X_T^{t,x,\pi})  \bigg| \mathfrak{F}_T^W\right], 
  \end{equation}
    where $\hat l, \hat g$ are $(\mathfrak{F}_{t}^{W})$-predictable resp. $\mathfrak{F}^{W}_{T}$-measurable. For example in the context of \cref{th:duality_stoch_viscosity}, \eqref{eq:duality_th} take  $\hat l_s(x,\pi):=\mathfrak{d}_{s}h_{s}(x) +H_s^{\text{cv}}(x,D (\mathfrak{d}_{\omega}h_s)(x),D h_s(x) ,D^2 h_s(x),\pi), \hat g(x):=g(x)-h_T(x)$ to see that $\hat V=V^{1,h}$. Such pathwise stochastic control problems have previously been considered e.g. in \cite{burstein_davis, lions_fully_1998,  buckdahn_ma_07} and more recently \cite{flz26}. Following the latter, \eqref{eq:path_dep_control_problem} can be treated with techniques coming from the recently developed theory of controlled rough SDEs see \cite{fhl21, flz26, flz25} (and therefore avoiding subtle measurability issues present in the former works). Note however, that since our setting involves random terminal cost $\hat{g}$ and running-cost $\hat{l}$ as well as random coefficients in the dynamics we still need to verify that randomization procedures as in \cite{flz26} and \cite{flz25} are admissible here. 
    \newline
    Throughout this section we consider a filtered probability space 
    \begin{equation*}
        (\Omega , \mathfrak{F}, (\mathfrak{F}_{t})_{t \in [0,T]}, \mathbb{P})= (\Omega', \mathfrak{F}', (\mathfrak{F}'_{t})_{t \in [0,T]}, \mathbb{P}')\otimes (\Omega'', \mathfrak{F}'', (\mathfrak{F}_{t}'')_{t \in [0,T]}, \mathbb{P}''),
    \end{equation*}
    where the former is the augmented canonical space generated by the $m_{B}$-dim. canonical process denoted by $B$ and the latter is the augmented canonical space generated by $m_{W}$-dim. canonical process $W$. See \cref{appendix:subsection:product_prob_spaces} for details on this construction. In the following let $b : [0,T] \times \Omega \times \mathbb{R}^{d} \times U \to \mathbb{R}^{d}$; $\sigma: [0,T] \times \Omega \times \mathbb{R}^{d} \times U \to \mathbb{R}^{d \times m_{B}}$ and $\gamma: [0,T] \times \Omega \times \mathbb{R}^{d} \times U \to \mathbb{R}^{d \times m_{W}}$. Further we need to impose the following condition on the measurability structure:
    \begin{hypothesis}
    \label{progressive_assumption}
        For the coefficients $\Theta \in \{b, \gamma, \sigma , \hat{l}, \hat g \}$ and a suitable choice of target space 
        $V\in \{ \mathbb{R}^{d}, \mathbb{R}^{d\times m_{W}}, \mathbb{R}^{d\times m_{B}}, \mathbb{R} \}$ respectively, $\Theta: [0,T] \times \Omega \to C_{b}(\mathbb{R}^{d} \times U; V)$
        is strongly \\
        $\mathfrak{P}^{W}/\mathfrak{B}(C_{b}(\mathbb{R}^{d} \times U; V))$-measurable, where $\mathfrak{P}^{W}$ denotes the predictable $\sigma$-algebra w.r.t. the filtration $(\mathfrak{F}^{W}_{t})$ on $[0,T]\times \Omega$. 
    \end{hypothesis}
    \begin{lemma}
    \label{lemma: deterministic_version}Suppose \cref{progressive_assumption} holds.
    Let $\Theta \in \{b, \sigma, \gamma, D_{x} \gamma, \hat{l}\}$ such that $\Theta: [0,T]\times \Omega \times \mathbb{R}^{d}\to V$ for a suitable target space $V\in \{ \mathbb{R}^{d}, \mathbb{R}^{d\times m_{W}}, \mathcal{L}(\mathbb{R}^{d}; \mathbb{R}^{d\times m_{W}}), \mathbb{R}^{d\times m_{B}}, \mathbb{R} \}$. Then there exist strongly Borel-measurable $\theta^{\text{det}}: [0,T]\times C([0,T]; \mathbb{R}^{m_{W}}) \to C_{b}(\mathbb{R}^{d} \times U; V)$ and $\hat{g}^{\text{det}}: \mathbb{R}^{d} \times C([0,T]; \mathbb{R}^{m_{W}})\to \mathbb{R}$ such that 
    \begin{equation*}
    \Theta_{t}(\cdot, \omega) = \Theta^{\text{det}}_{t}(\cdot, W(\omega))=\Theta^{\text{det}}_{t}(\cdot, W_{\cdot \wedge t}(\omega)); \quad \hat{g}(\cdot, \omega)= \hat{g}^{\text{det}}(\cdot, W(\omega))
    \end{equation*}
    as elements in $ C_{b}(\mathbb{R}^{d} \times U; V)$ for $\mathbb{P}$-a.e. $\omega \in \Omega$, for every $t \in [0,T]$. In particular this implies that for any $(x,u) \in \mathbb{R}^{d} \times U$ it holds
    \begin{equation*}
        \Theta(t, x, u, \omega)= \Theta^{\text{det}}_{t}(x, u,  W_{\cdot \wedge t}(\omega)); \; \; \; \; \hat{g}(x, \omega)= \hat{g}^{\text{det}}(x, W_{\cdot \wedge T}(\omega))
    \end{equation*}
    for any $t \in [0,T]$, for $\mathbb{P}$-a.e. $\omega \in \Omega$
    \end{lemma}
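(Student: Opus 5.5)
This is a Doob--Dynkin factorization argument for Banach-space-valued maps, carried out in three steps: reduce to simple functions, factor those, and pass to the limit.

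\textbf{Step 1: reduce to simple functions.} Fix $\Theta$ and set $E:=C_b(\mathbb{R}^d\times U;V)$. By \cref{progressive_assumption}, the map $\Theta:[0,T]\times\Omega\to E$ is strongly $\mathfrak{P}^W$-measurable. Hence it is the pointwise limit of $\mathfrak{P}^W$-simple functions
\[
\Theta^n=\sum_{k}e_k^n\mathbf 1_{A_k^n}, \qquad e_k^n\in E,\quad A_k^n\in\mathfrak{P}^W,
\]
each with a finite sum. For $\Theta=D_x\gamma$ I will use the same assumption with $D_x\gamma$ in place of $\gamma$, which is implicitly part of the list. Alternatively, I can obtain $D_x\gamma$ as a limit of difference quotients of $\gamma$, and these preserve strong measurability.

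\textbf{Step 2: factor the predictable sets.} The predictable $\sigma$-algebra $\mathfrak{P}^W$ on $[0,T]\times\Omega$ is generated, up to null sets coming from the augmentation, by the stopped map
\[
\Phi:(t,\omega)\mapsto (t,W_{\cdot\wedge t}(\omega))\in[0,T]\times C([0,T];\mathbb{R}^{m_W}).
\]
More precisely, $\mathfrak{P}^W$ consists of the sets $\Phi^{-1}(\Gamma)$ with $\Gamma$ Borel, modulo evanescent or null modifications. I will check this on the generating $\pi$-system of predictable rectangles $(s,r]\times F$, with $F\in\sigma(W_u:u\le s)$; on such a rectangle $\mathbf 1_F(\omega)=\mathbf 1_{\Gamma}(W_{\cdot\wedge t}(\omega))$ for $t>s$. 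A monotone class argument then extends this to all of $\mathfrak{P}^W$. The completion enters only through indistinguishability: every set in the augmented $\mathfrak{P}^W$ differs from a set in the raw predictable $\sigma$-algebra by a $dt\otimes\mathbb P$-null set, and in fact by an evanescent set when one uses predictable versions.

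With this, each $A_k^n$ can be written as $\Phi^{-1}(\Gamma_k^n)$ up to null sets. I then define
\[
\theta^{n}(t,w):=\sum_k e^n_k\mathbf 1_{\Gamma^n_k}(t,w).
\]
This is strongly Borel-measurable, and the factorization through $W_{\cdot\wedge t}$ is built in.

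\textbf{Step 3: pass to the limit.} I define $\Theta^{\det}_t(w):=\lim_n\theta^n(t,w)$ on the Borel set where this limit exists in $E$, and set it to $0$ elsewhere. The existence set is Borel because $E$ is a metric space and convergence is a Cauchy condition: it is the set $\{\limsup_{n,m}\|\theta^n-\theta^m\|=0\}$, and since each $\theta^n$ has separable range, only countably many values are involved. A pointwise limit of strongly measurable functions is strongly measurable, so $\Theta^{\det}$ is strongly Borel.

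For $\hat g$ I repeat the argument with $\mathfrak{F}^W_T$ in place of $\mathfrak{P}^W$. Here the classical Doob--Dynkin lemma applies directly: $\mathfrak{F}^W_T$ coincides with $\sigma(W)$ up to null sets. The pointwise statement for fixed $(x,u)$ then follows by composing with the continuous evaluation functional $\delta_{(x,u)}$ on $E$.

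\textbf{Main obstacle.} I expect the hardest point to be the quantifier in the claim: it asserts a $\mathbb P$-null set that works for every $t$, rather than equality $dt\otimes\mathbb P$-a.e. The null set produced in Step 2 is a subset of $[0,T]\times\Omega$, so it must be shown to be evanescent, not merely of product measure zero. I will handle this by choosing raw (unaugmented) predictable representatives from the outset. Two predictable processes that agree up to an evanescent set differ only on $[0,T]\times N$ for a single null set $N$. For the $W$-augmentation this is harmless, because a predictable process with respect to the augmented filtration is indistinguishable from one that is predictable with respect to the raw filtration.
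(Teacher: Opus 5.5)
Your argument is correct, and it takes a more self-contained route than the paper. The paper uses the product structure, $\mathfrak{F}^W_t=(\{\emptyset,\Omega'\}\otimes\mathfrak{F}''_t)\vee\mathcal{N}$, to replace $\Theta$ up to indistinguishability by an $(\mathfrak{F}''_t)$-predictable process $\bar\Theta$ on the canonical space $\Omega''$. It then invokes \cref{appendix:lemma:optional_nonantic}: predictable processes are indistinguishable from optional ones, and optional processes on canonical space satisfy $X_t(\omega'')=X_t(\omega''_{\cdot\wedge t})$. Since $W(\omega'')=\omega''$, $\bar\Theta$ itself serves as $\Theta^{\text{det}}$.

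You work on $\Omega$ directly, treating $W$ as a path-valued random variable, and in effect reprove that lemma. You identify the raw predictable $\sigma$-algebra with $\sigma(\Phi)$. Your good-sets argument shows that augmentation perturbs predictable sets only by evanescent sets, which is exactly what yields one null set valid for all $t$. You then approximate by simple functions in the non-separable space $C_b(\mathbb{R}^d\times U;V)$ and use a Borel Cauchy set to define the limit.

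The paper's route is shorter and gives non-anticipativity in one stroke. Yours makes explicit what the paper delegates to the citation. First, $\Theta^{\text{det}}$ really is strongly Borel on $[0,T]\times C([0,T];\mathbb{R}^{m_W})$; an augmented-predictable $\bar\Theta$ is a priori not Borel in $\omega''$. Second, the cited lemma is stated for Polish targets, so one has to pass to the separable closed span of the range. Your route also does not need the product structure at all.

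One small point to make explicit. Your construction gives $\Theta_t(\omega)=\Theta^{\text{det}}_t(W_{\cdot\wedge t}(\omega))$. The first equality in the statement, $\Theta^{\text{det}}_t(W(\omega))=\Theta^{\text{det}}_t(W_{\cdot\wedge t}(\omega))$, is not automatic for an arbitrary Borel $\Gamma^n_k$ with the right $\Phi$-preimage. To fix this, replace $\Gamma^n_k$ by $\{(t,w):(t,w_{\cdot\wedge t})\in\Gamma^n_k\}$. This set has the same preimage under $\Phi$ and is Borel, because $(t,w)\mapsto(t,w_{\cdot\wedge t})$ is continuous. Then every $\theta^n$, and hence the limit, satisfies $\Theta^{\text{det}}_t(w)=\Theta^{\text{det}}_t(w_{\cdot\wedge t})$ identically.

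Also, your difference-quotient fallback for $D_x\gamma$ needs the quotients to converge in the norm of $C_b$. That requires extra regularity, for instance uniform continuity of $D_x\gamma_t(\cdot,\omega)$, which is not contained in \cref{progressive_assumption}. So, like the paper, you are effectively adding $D_x\gamma$ to the list there.
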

    \begin{proof}
        Note that since $\mathfrak{F}^{W}_{t}= (\{ \emptyset , \Omega' \} \otimes \mathfrak{F}''_{t})\vee \mathcal{N}$ where $\mathcal{N}$ denotes the collection of $\mathbb{P}=\mathbb{P}'\otimes \mathbb{P}''$-nullsets, we see that by \cref{progressive_assumption}, for any $\Theta$ as above there is a $(\mathfrak{F}''_{t})$-predictable $\bar{\Theta}: [0,T]\times \Omega''\to C(\mathbb{R}^{d}\times U; V)$ such that $\Theta=\bar{\Theta}$ up to $\mathbb{P}$-indistinguishability. The existence of $\Theta^{\text{det}}$ as described above is then a direct consequence of applying \cref{appendix:lemma:optional_nonantic} to $\bar{\Theta}$.
    \end{proof}
    Notably \cref{lemma: deterministic_version} allows us to work with coefficients as deterministic, non-anticipative functionals on path space instead of random fields. From now on we will not distinguish between  $\Theta$ and $\Theta^{\text{det}}$ but w.l.o.g. only work with the latter one. Note that the solution of the controlled doubly-SDE 
    \begin{equation}
    \label{eq:dcSDE}
    \begin{aligned}
    dX_{s}^{t, x, \pi}=&b_s^{\text{det}}(X_s^{t, x, \pi},\pi_s, W_{\cdot \wedge s})ds+\sigma_s^{\text{det}} (X_s^{t, x, \pi}, \pi_s, W_{\cdot \wedge s})dB_s\\
    &+\gamma_s^{\text{det}}(X_s^{t, x,\pi},\pi_s, W_{\cdot \wedge s})dW_s , \quad X_t^{t, x, \pi}=x\in \mathbb R^d
    \end{aligned}
    \end{equation} is indistinguishable from the solution of \eqref{eq:controlled_doubly_sde} and thus will be denoted by the same letter going forward. Naturally, for suitably regular $\gamma$, we can also rewrite \eqref{eq:dcSDE} in Stratonovich-form 
    \begin{equation*}
    \begin{aligned}
    dX_{s}^{t, x, \pi}=&\tilde{b}_s(X_s^{t, x, \pi}, \pi_s, W_{\cdot \wedge s})ds+\sigma_s (X_s^{t, x, \pi}, \pi_s, W_{\cdot \wedge s})dB_s+\gamma_s(X_s^{t, x,\pi},\pi_s, W_{\cdot \wedge s})\circ dW_s , \quad X_t^{t, x, \pi}=x\in \mathbb R^d. 
    \end{aligned}
    \end{equation*}
Here we consider the corrected drift 
\begin{equation*}
    \tilde{b}_{s}(x,u, z)\coloneqq b_{s}(x, u,z) -\frac{1}{2} \left( \sum_{k=1}^{m_{W}}(D_{x}\gamma_{s}(x, u, z)\gamma_{s}(x, u, z) +  \gamma'_{s}(x,u,z))(e_{k}\otimes e_{k})\right)
\end{equation*}
with $\gamma'$ being the Gubinelli-derivative of $\gamma$ w.r.t $W$ (see \cref{assumption_RSDE_existence_uniqueness} for details). We further need the following assumptions on the coefficients which follow from \cref{thm.fixpoint} to ensure existence and uniqueness of solutions to the controlled RSDEs: 
\begin{equation}
    \label{eq:controlled_RSDE}
\begin{aligned}
    dX_{s}^{t, x, \pi; \mathbf{Z}}= &b_{s} (X_{s}^{t, x, \pi; \mathbf{Z}}, \pi_{s}, Z_{\cdot \wedge s} ) ds + \sigma_{s} (X_{s}^{t, x, \pi; \mathbf{Z}}, \pi_{s},  Z_{\cdot \wedge s}) dB_{s}+ \gamma_{s}(X_{s}^{t, x, \pi; \mathbf{Z}},  Z_{\cdot \wedge s}) d\mathbf{Z}_{s}; \quad X^{t, x, \pi; \mathbf{Z}}_{t}=x \\
    d\tilde{X}_{s}^{t, x, \pi; \mathbf{Z}}= & \tilde{b}_{s} (\tilde{X}_{s}^{t, x, \pi; \mathbf{Z}}, \pi_{s}, Z_{\cdot \wedge s} ) ds + \sigma_{s} (\tilde{X}_{s}^{t, x, \pi; \mathbf{Z}}, \pi_{s},  Z_{\cdot \wedge s}) dB_{s}+ \gamma_{s}(\tilde{X}_{s}^{t, x, \pi; \mathbf{Z}},  Z_{\cdot \wedge s}) d\mathbf{Z}_{s}; \quad \tilde{X}^{t, x, \pi; \mathbf{Z}}_{t}=x
\end{aligned}
\end{equation}
where $\mathbf{Z} =(Z, \mathbb{Z}) \in \mathscr{C}^{\alpha}([0,T]; \mathbb{R}^{m_{W}})$ and $\pi$ denotes some $U$-valued control to be specified in \cref{def:controls}. 
    \begin{hypothesis}
    \label{assumption_RSDE_existence_uniqueness}
        Suppose that the following hold:
        \begin{enumerate}[(i)]
        \item $\gamma$ is independent of any controls $\pi$. 
        \item For any $Z \in C([0,T]; \mathbb{R}^{m_{W}})$ and $\pi \in \mathcal{U}$ it holds for $\Theta \in \{b(\cdot, \pi), \sigma(\cdot, \pi), \hat{l}(\cdot, 
        \pi) \}$, that
        \begin{equation*}
            \Theta^{Z, \pi} : [0,T] \times \Omega \times \mathbb{R}^{d} \to V; \, \; \; \; (t, \omega, x) \mapsto \Theta_{t}(t, x, \pi_{t}(\omega), Z_{\cdot \wedge t})
        \end{equation*}
        is $(\mathfrak{F}_{t})$-predictable and random bounded Lipschitz as in \cref{def:random_bounded_Lipschitz}.
        \item Let $\beta > \nicefrac{1}{\alpha}$. There is $\gamma'$ such that 
        $
           (\gamma, \gamma'): [0,T] \times \mathbb{R}^{d} \times C^{m_{W}} \to \mathcal{L}(\mathbb{R}^{m_{W}}; \mathbb{R}^{d})\times \mathcal{L}(\mathbb{R}^{m_{W}}\otimes \mathbb{R}^{m_{W}}; \mathbb{R}^{d})$,  
        such that for any $\mathbf{Z}=(Z, \mathbb{Z})\in \mathscr{C}^{\alpha}([0,T]; \mathbb{R}^{m_{W}})$, $f\in \{\gamma, \gamma'\}$ $f_{t}(\cdot, Z)= f_{t}(\cdot, Z_{\cdot \wedge t})$. Further assume $(\gamma(\cdot, Z), \gamma'(\cdot, Z)) \in \mathscr{D}_{Z}^{2\alpha} C_{b}^{\beta}$ and $(D_{x}\gamma(\cdot, Z), D_{x}\gamma'(\cdot, Z)) \in \mathscr{D}_{Z}^{2\alpha} C_{b}^{\beta-1}$.  
        \item For any $Z \in C([0,T]; \mathbb{R}^{m_{W}})$, $\hat{g}(\cdot, Z)\in \operatorname{BUC}(\mathbb{R}^{d})$. 
        \end{enumerate}
    \end{hypothesis}
    \begin{remark}
    \label{remark:controlled_diffusion}
    Notably $(\gamma, \gamma')$ do not depend on the control $\pi$, as the problem degenerates otherwise due to the unbounded variation of $Z$, see \cite[Remark 13]{DiehlFrizGassiat2017}. In \cite{allan_cohen_2020, IannucciCrisanCass2025} the authors provide criteria to consider such controlled (in sense of $\pi$ dependence) diffusion coefficients in the rough term, however at the expense of restricting the class of admissible controls and of adding regularization terms to the cost functional. 
    \end{remark}
    As in \cite{flz26}, we want to distinguish between controls with multiple degrees of dependence on $\mathbf{Z}$. Therefore we introduce the following sets of controls: 
    \begin{definition}[\cite{flz26} Section 7]
    \label{def:controls}
        Let $\mathfrak{P}'$ be the predictable $\sigma$-algebra on $[0,T] \times \Omega'$ w.r.t. $(\mathfrak{F}'_{t})_{t \in [0,T]}$. Consider  the following sets of $U$-valued controls 
        $\pi : [0,T] \times \Omega' \times \mathscr{C}^{0, \alpha, 1}([0,T]; \mathbb{R}^{m_{W}})\to U$: 
        \begin{equation*}
            \begin{aligned}
                \mathcal{A}^{1}&\coloneqq \{\pi \; \; \text{is} \; \; \mathfrak{P}'\otimes \{\emptyset, \mathscr{C}^{0, \alpha, 1}_{T} \}/\mathfrak{B}_{U} \; -\text{measurable} \}\\
                \mathcal{A}^{2}&\coloneqq \{\pi \; \; \text{is} \; \; \mathfrak{P}'\otimes \mathfrak{C}^{\alpha}_{T}/\mathfrak{B}_{U} \; -\text{measurable} \}\\
                \mathcal{A}^{c}&\coloneqq \{\pi \; \; \text{is} \; \; \mathfrak{P}'\otimes \mathfrak{C}^{\alpha}_{T}/\mathfrak{B}_{U} \; -\text{measurable}; \; \; \pi_{t}(\omega'; \mathbf{Z})= \pi_{t}(\omega'; \mathbf{Z}_{\cdot \wedge t}) \; \text{for} \; \mathbb{P}'-\text{a.e.} \; \omega' \in \Omega' \}\\
                \mathcal{A}&\coloneqq \{ \pi : [0,T]\times \Omega' \to U \; \text{is} \; \mathfrak{P}'/\mathfrak{B}_{U} \; -\text{measurable}\}  
            \end{aligned}
        \end{equation*}
        Obviously it holds $\mathcal{A}^{1} \subset \mathcal{A}^{c} \subset \mathcal{A}^{2}$. For any such $
        \pi \in \mathcal{A}^{i}$ for $i\in \{1, 2, c \}$ we define $\bar{\pi}: [0,T]\times \Omega \to U; (t, \omega)\mapsto \pi(t, \omega', \mathbf{W}^{\text{It\^{o}}}(\omega''))$, where $\mathbf{W}^{\text{It\^{o}}}$ is the It\^{o}-lift of $W$. 
    \end{definition}
    We are now equipped to discuss existence, uniqueness and measurability of solutions to \eqref{eq:controlled_RSDE}. Recall that $\mathfrak{C}^{\alpha}_{T}$ denotes the Borel-$\sigma$-algebra of the Polish space $\mathscr{C}^{0, \alpha, 1}([0,T]; \mathbb{R}^{m_{W}})$. 
    \begin{theorem}
    \label{rsde_existence_uniqueness_progressiveversion}
        Suppose \cref{assumption_RSDE_existence_uniqueness} holds. Then for any $q \in [2, \infty)$, $(t, x) \in [0, T]\times \mathbb{R}^{d}$, any $\pi \in \mathcal{A}^{i}$, $i\in \{1,2, c\}$ and any $\mathbf{Z}=(Z, \mathbb{Z}) \in \mathscr{C}^{0, \alpha, 1}([0,T]; \mathbb{R}^{m_{W}})$ there exist unique $L_{q, \infty}$-solutions  to both equations in \eqref{eq:controlled_RSDE} in the sense of \cref{def.soln}. Further for both there is a $\mathfrak{C}_{T}^{\alpha}$-predictable version w.r.t. the filtration $(\mathfrak{F}_{s}')_{s \in [0,T]}$. 
    \end{theorem}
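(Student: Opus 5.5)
The proof will rest on the general well-posedness result for RSDEs with random bounded Lipschitz coefficients from the appendix (\cref{thm.fixpoint}, with solutions in the sense of \cref{def.soln}). The work is to check that, for fixed $\mathbf{Z}$ and $\pi$, the frozen coefficients meet its hypotheses, and then to establish joint measurability in $\mathbf{Z}$.

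First fix $(t,x)$, $\pi\in\mathcal{A}^i$ and $\mathbf{Z}\in\mathscr{C}^{0,\alpha,1}$. The map $(s,\omega')\mapsto\pi_s(\omega';\mathbf{Z})$ is $\mathfrak{P}'$-measurable, since sections of $\mathfrak{P}'\otimes\mathfrak{C}^\alpha_T$-measurable maps are $\mathfrak{P}'$-measurable. So by \cref{assumption_RSDE_existence_uniqueness}(ii) the drift $b_s(\cdot,\pi_s,Z_{\cdot\wedge s})$ and the diffusion $\sigma_s(\cdot,\pi_s,Z_{\cdot\wedge s})$ are $(\mathfrak{F}'_s)$-predictable and random bounded Lipschitz on the $B$-space $\Omega'$. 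By (iii), $(\gamma(\cdot,Z),\gamma'(\cdot,Z))$ is a deterministic, $Z$-controlled vector field in $\mathscr{D}^{2\alpha}_ZC^\beta_b$ with $\beta>1/\alpha$, and its derivative has the analogous regularity. This is exactly the setting of \cref{thm.fixpoint}, which yields a unique $L_{q,\infty}$-solution for every $q\in[2,\infty)$.

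For the Stratonovich version $\tilde X$, the only change is the drift $\tilde b$. Its correction term $D_x\gamma\,\gamma+\gamma'$ is bounded and Lipschitz in $x$, because $\gamma\in C^\beta_b$ with $\beta>2$ and $D_x\gamma'\in C^{\beta-1}_b$. It is also continuous in $s$ for fixed $Z$. Hence $\tilde b$ is again random bounded Lipschitz and predictable, and the same argument applies.

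The main obstacle is the measurable dependence on $\mathbf{Z}$: we need a version of $(s,\omega',\mathbf{Z})\mapsto X_s^{t,x,\pi;\mathbf{Z}}(\omega')$ that is predictable with respect to $\mathfrak{P}'\otimes\mathfrak{C}^\alpha_T$. I would follow the Picard-iteration route of \cite{flz26}. Since \cref{thm.fixpoint} produces the solution as the limit of a contraction (on small intervals, patched together), I would define iterates $X^{n+1}=\Gamma(X^n)$ starting from $X^0\equiv x$ and prove by induction that each iterate has a jointly measurable predictable version.

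For the Lebesgue and $dB$ integrals this follows from parametrized stochastic integration: the integrand is jointly measurable in the parameter $\mathbf{Z}$ from the Polish space $\mathscr{C}^{0,\alpha,1}$, so one may use Stricker–Yor-type results or approximation by simple integrands. For the rough integral, I would use that it is the limit of compensated Riemann sums $\gamma(X_u)Z_{u,v}+(\cdots)\mathbb{Z}_{u,v}$, each of which is jointly measurable. Limits in probability, taken uniformly along a fixed sequence of partitions, preserve measurability after passing to subsequences chosen measurably, for instance via the first index beyond which the terms are Cauchy.

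The contraction estimate gives convergence of the iterates in $L_q$ for each fixed $\mathbf{Z}$, with a rate that is locally uniform in $\mathbf{Z}$. Taking $\limsup$ of a suitably fast subsequence then yields a jointly measurable limit, which is $\mathfrak{C}^\alpha_T$-predictable with respect to $(\mathfrak{F}'_s)$. For each $\mathbf{Z}$ this limit is indistinguishable from the unique solution constructed above.
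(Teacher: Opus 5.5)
Your existence-and-uniqueness step matches the paper's: for fixed $\mathbf{Z}$ and $\pi$ you apply \cref{thm.fixpoint}. Your separate check that the Stratonovich drift $\tilde b$ is again random bounded Lipschitz is a detail the paper skips. Note that it is $\gamma'\in C^{\beta-1}_b$, not $D_x\gamma'$, that makes the correction Lipschitz.

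For the $\mathfrak{C}^\alpha_T$-predictable version your route is sound in outline but genuinely different. The paper constructs nothing. It observes that, since $U\times C([0,T];\mathbb{R}^{m_W})$ is Polish, the map $\pi^{\text{ext}}(t,\omega',\mathbf{Z})=(\pi_t(\omega';\mathbf{Z}),Z_{\cdot\wedge t})$ is $\mathfrak{C}^\alpha_T$-predictable. Hence $b\circ\pi^{\text{ext}}$ and $\sigma\circ\pi^{\text{ext}}$, together with $(\gamma,\gamma')$, meet the parametrized hypotheses (A1)--(A2) of \cite{flz25}. It then quotes \cite[Theorem 3.2(i)]{flz25}, which is exactly a measurable-dependence result for RSDEs in the rough-path parameter.

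You instead re-prove that result by hand: Picard iterates, Stricker--Yor for the $ds$ and $dB$ integrals, measurable limits of compensated Riemann sums for the rough integral, and a measurable selection of the limit. This is self-contained and shows why the statement holds. However, it relies on the internal contraction structure of the proof of \cref{thm.fixpoint}, not just its statement, and it carries bookkeeping the citation avoids. The one substantive point of the paper's proof is the joint $\mathfrak{P}'\otimes\mathfrak{C}^\alpha_T$-measurability of $(s,\omega',\mathbf{Z})\mapsto b_s(\cdot,\pi_s(\omega';\mathbf{Z}),Z_{\cdot\wedge s})$ and of the $\sigma$-term. Your base step needs exactly this, and you only assert it; derive it via $\pi^{\text{ext}}$.

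Two details of your sketch should be corrected.
\begin{enumerate}
\item The ``locally uniform in $\mathbf{Z}$'' rate is not available. \cref{assumption_RSDE_existence_uniqueness}(iii) gives $(\gamma(\cdot,Z),\gamma'(\cdot,Z))\in\mathscr{D}^{2\alpha}_ZC^\beta_b$ for each $Z$ separately, with no bound or continuity in $Z$, so contraction constants and interval lengths can vary arbitrarily. It is also not needed. For fixed $\mathbf{Z}$, a contraction factor $\tfrac12$ makes $\sum_n\|X^{n+1}-X^n\|$ finite. The iterates then converge a.s.\ along the full sequence, and $\limsup_n X^n$ is already jointly measurable.
\item Patching the small intervals requires interval lengths that depend measurably on $\mathbf{Z}$, e.g.\ the coarsest dyadic grid on which the contraction holds. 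This in turn needs $\mathbf{Z}\mapsto\gamma'(\cdot,Z)$ and the controlled norms to be measurable. The paper also takes this for granted when it asserts (A1)--(A2) for $(\gamma,\gamma')$.
\end{enumerate}
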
 
    From now on we will always consider these versions and
     denote them by $(X^{ t, x,\pi; \mathbf{Z}}_{s})_{s \in [t,T]}, (\tilde{X}^{t, x, \pi; \mathbf{Z}}_{t})_{s \in [t,T]}$, respectively.
    \begin{proof}
        For the existence and uniqueness of solutions directly apply \cref{thm.fixpoint}. Of course this solution is then also a rough Itô process in the sense of \cite[Section 3]{flz25}. For the existence of the $\mathfrak{C}^{\alpha}_{T}$-predictable version, note that since $U \subset \mathbb{R}^{h}$ is compact, it is complete and separable under the Euclidean norm. Thus the product-space $U \times C([0,T]; \mathbb{R}^{m_{W}}) $ is complete and separable as well and so for any $\pi \in \mathcal{A}^{i}$ for any $i\in \{1,2, c\}$ the extension
        \begin{equation*}
            \pi^{\text{ext}} : [0,T] \times \Omega' \times \mathscr{C}^{0,\alpha, 1}([0,T]; \mathbb{R}^{m_{W}}) \to U \times C([0,T]; \mathbb{R}^{m_{W}}); \; \; (t, \omega', \mathbf{Z}) \mapsto \left(\pi_{t}(\omega'; \mathbf{Z}), Z_{\cdot \wedge t} \right)
        \end{equation*}
        is $\mathfrak{C}_{T}^{\alpha}$-predictable. Thus by measurability of the composition $\Theta \in \{b \circ \pi^{\text{ext}}, \sigma \circ \pi^{\text{ext}}\}$ and the pair $(\gamma, \gamma')$ together satisfy condition (A1) and (A2) in \cite[Section 3]{flz25} and thus by \cite[Theorem 3.2(i)]{flz25} there is a $\mathfrak{C}_{T}^{\alpha}$-predictable version of the solution process 
        \begin{equation*}
        [0,T] \times \Omega' \times \mathscr{C}^{0, \alpha, 1}([0,T]; \mathbb{R}^{m_{W}}) \ni (t, \omega', \mathbf{Z}) \mapsto X_{t}^{\pi, \mathbf{Z}}(\omega')
        \end{equation*}
        for any $\pi \in \mathcal{A}^{i}$, $i\in \{1,2, c\}$. 
    \end{proof}
    Next we want to connect the anticipative stochastic control problem \eqref{eq:path_dep_control_problem} with the corresponding rough stochastic control problem. To perform this 'freezing of the noise $W$' on the level of the control problem we need some further stability properties
    \begin{hypothesis}
    \label{ass:weaker_stability}
        Define for any $Z^{1}, Z^{2}\in \mathcal{C}^{\alpha}([0,T]; \mathbb{R}^{m_{W}})$
    \begin{equation*}
    \begin{aligned} 
       \max_{\varphi \in \{\tilde{b}, \sigma, \hat{l} \}} \sup_{u \in U} \sup_{t \in [0,T]}\Vert \varphi_{t}( \cdot, u, Z^1_{\cdot \wedge t})- \varphi_{t}( \cdot, u, Z^2_{\cdot \wedge t}) \Vert_{\infty}+ \Vert \hat{g}(\cdot, Z^1)- \hat{g}(\cdot, Z^2)\Vert_{\infty}&\eqqcolon A^{1}(Z^1; Z^2) \\
    \Vert \gamma_{t}(\cdot, Z_{\cdot \wedge t}^{1})- \gamma_{t}(\cdot,  Z^{2}_{\cdot \wedge t})\Vert_{\infty}\eqqcolon A^{2}(Z^{1}, Z^{2})
    \end{aligned}
    \end{equation*}
    and suppose that $A^{i}(Z^{n}, Z)\to 0$ for any sequence $(Z_{n})_{n \in \mathbb{N}}$ converging to $Z$ in $\mathcal{C}^{\alpha}$.
    \end{hypothesis}
    The following is an extension of \cite[Theorem 7.4]{flz26} using the good rough paths of \cref{subsection:good_rough_paths}.
    \begin{theorem}
    \label{thm:randomization_value_function}
        Suppose that Assumption \ref{progressive_assumption}, \ref{assumption_RSDE_existence_uniqueness} and \ref{ass:weaker_stability} hold. We define for any $i \in \{1,2, c\}$ the random value functions 
        \begin{equation*}
            \hat{V}^{i}_{t}(x) \coloneqq \essinf_{\pi \in \mathcal{A}^{i}}\mathbb{E} \left[\int_{t}^{T} \hat{l}_{s}(X_{s}^{t,x, \bar{\pi}}, \bar{\pi}_{s}) ds + \hat{g}(X_{T}^{t,x,\bar{\pi}}) \bigg|\mathfrak{F}^{W}_{T}\right]
        \end{equation*}
        as well as the rough value functions: 
        \begin{equation*}
        \begin{aligned}
        \mathcal{V}_{t}(x; \mathbf{Z})&\coloneqq \inf_{\pi \in \mathcal{A}} \mathbb{E}'\left[ \int_{t}^{T} \hat{l}_{s}(X_{s}^{t,x, \pi; \mathbf{Z}}, \pi_{s}, Z_{\cdot \wedge s}) ds + \hat{g}(X_{T}^{t,x,\pi;  \mathbf{Z}}) \right], \\
        \tilde{\mathcal{V}}_{t}(x; \mathbf{Z})&\coloneqq \inf_{\pi \in \mathcal{A}} \mathbb{E}'\left[ \int_{t}^{T} \hat{l}_{s}(\tilde{X}_{s}^{t,x, \pi; \mathbf{Z}}, \pi_{s}, Z_{\cdot \wedge s}) ds + \hat{g}(\tilde{X}_{T}^{t,x,\pi;  \mathbf{Z}}) \right].
        \end{aligned}
        \end{equation*} where $X^{\cdot;\mathbf{Z}}, \tilde{X}^{\cdot; \mathbf{Z}}$ are as in \eqref{eq:controlled_RSDE}; we also put  
        $\overline{\mathcal{V}}_{t}(x; \omega)\coloneqq \mathcal{V}_{t}(x; \mathbf{W}^{\text{It\^{o}}}(\omega''))= \tilde{\mathcal{V}}_{t}(x; \mathbf{W}^{\text{Strat}}(\omega''))$ for a.e. $\omega=(\omega', \omega'') \in \Omega$.
        Then it holds for any $(t, x) \in [0,T] \times \mathbb{R}^{d}$, a.s.
        \begin{equation*}
            \hat{V}^{1}_{t}(x)=\hat{V}^{2}_{t}(x)=\hat{V}^{c}_{t}(x)= \overline{\mathcal{V}}_{t}(x)= \hat{V}_{t}(x).
        \end{equation*}
    \end{theorem}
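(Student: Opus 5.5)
The plan follows the strategy of \cite[Theorem 7.4]{flz26}. First I prove a \emph{randomization identity}, which identifies the conditional expectation given $\mathfrak{F}^W_T$ with the rough expectation evaluated at the It\^{o} lift. Second I compare the three control classes.

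\textbf{Step 1: randomization identity.} For $\pi\in\mathcal{A}^2$ (and hence for $\mathcal{A}^1\subset\mathcal{A}^c\subset\mathcal{A}^2$), I show that almost surely
\[
\mathbb{E}\Big[\int_t^T \hat l_s(X_s^{t,x,\bar\pi},\bar\pi_s)ds+\hat g(X_T^{t,x,\bar\pi})\Big|\mathfrak{F}^W_T\Big](\omega)=J(t,x,\pi;\mathbf{W}^{\text{It\^o}}(\omega'')),
\]
where $J(t,x,\pi;\mathbf{Z})$ denotes the $\mathbb{E}'$-cost of $X^{t,x,\pi(\cdot;\mathbf{Z});\mathbf{Z}}$.

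This rests on the identity $X^{t,x,\bar\pi}(\omega',\omega'')=X^{t,x,\pi(\cdot;\mathbf{W}(\omega''));\mathbf{W}(\omega'')}(\omega')$ almost surely. I would prove it in three moves.
\begin{enumerate}[(a)]
\item Take $\mathbf{W}^{\text{Strat}}$ as a limit of piecewise-linear (or good, in the sense of \cref{subsection:good_rough_paths}) smooth approximations $\mathbf{Z}^n$. For smooth $Z^n$ the RSDE is a classical SDE in $B$ alone, so the identity holds by construction.
\item Pass to the limit using the stability estimates of \cref{thm.fixpoint}, where the coefficient perturbations are controlled by \cref{ass:weaker_stability} through $A^1,A^2\to0$.
\item Identify the limit as the Stratonovich doubly-SDE solution via Wong--Zakai, equivalently via the It\^o form with the correction $\tilde b$.
\end{enumerate}
The $\mathfrak{C}^\alpha_T$-predictable version from \cref{rsde_existence_uniqueness_progressiveversion}, combined with Fubini on the product space and independence of $B$ and $W$, then gives the conditional expectation formula. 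The same argument yields $\mathcal{V}(\cdot;\mathbf{W}^{\text{It\^o}})=\tilde{\mathcal{V}}(\cdot;\mathbf{W}^{\text{Strat}})$, since both RSDEs then describe the same process.

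\textbf{Step 2: comparing the control classes.} From Step 1 and the inclusions of the classes,
\[
\hat V^2\le\hat V^c\le\hat V^1,\qquad \hat V^1\ge\overline{\mathcal V}.
\]
In fact $\hat V^1=\overline{\mathcal V}$ up to the exchange of $\inf$ and evaluation, because for $\pi\in\mathcal{A}^1$ the control does not depend on $\mathbf{Z}$.

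For $\hat V^2\ge\overline{\mathcal V}$, fix $\pi\in\mathcal{A}^2$. For each fixed $\mathbf{Z}$, the control $\pi(\cdot;\mathbf{Z})$ lies in $\mathcal{A}$, so $J(t,x,\pi;\mathbf{Z})\ge\mathcal V_t(x;\mathbf{Z})$ pointwise. Evaluating at $\mathbf{Z}=\mathbf{W}^{\text{It\^o}}$ gives the inequality.

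For the reverse inequality $\hat V^1\le\overline{\mathcal V}$, I need $\epsilon$-optimal controls depending measurably on $\mathbf{Z}$. I would obtain them as follows.
\begin{enumerate}[(a)]
\item Show that $\mathbf{Z}\mapsto J(t,x,\pi;\mathbf{Z})$ is continuous on $\mathscr{C}^{0,\alpha,1}$, uniformly in $\pi\in\mathcal{A}$, using the same RSDE stability and \cref{ass:weaker_stability}.
\item Consequently $\mathcal V_t(x;\cdot)$ is continuous.
\item Cover a countable dense set with $\epsilon$-optimal constant controls and glue them along a measurable countable partition to obtain a $\pi\in\mathcal{A}^2$. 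Replacing $\mathcal{A}^2$ by $\mathcal{A}^1$ is not possible in that gluing, so I would handle it differently.
\end{enumerate}

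On the left side I have
\[
\hat V^1=\operatorname{essinf}_{\pi\in\mathcal{A}}J(\pi;\mathbf{W}),
\]
and on the right side $\inf_\pi J(\pi;\mathbf{Z})$ evaluated at $\mathbf{Z}=\mathbf{W}$. These two agree: the essential infimum over the countable family $\{J(\pi_k;\mathbf W)\}_{k\in\mathbb N}$ of continuous functionals of $\mathbf{W}$ is dominated by $\inf_k J(\pi_k;\mathbf{W})$. Choosing a countable family that is dense in cost, which is possible because of the uniform continuity in (a), this infimum equals $\mathcal V(\mathbf{W})$.

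Finally, $\hat V=\hat V^c$ follows because every $(\mathfrak{F}_s)$-predictable $\pi$ admits, by \cref{appendix:lemma:optional_nonantic}, a representation $\pi_s=\tilde\pi_s(\omega',W_{\cdot\wedge s})$. Since $W$ is an a.s.\ continuous functional of its It\^o lift, this gives an element of $\mathcal{A}^c$, and the reverse inclusion is immediate.

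\textbf{Main obstacle.} I expect the hardest part to be Step 1: establishing the almost-sure identification of $X^{\bar\pi}$ with the rough solution when $\pi$ depends anticipatively on $\mathbf{Z}$ and the coefficients are path-dependent. Here I would first use good rough path approximations so that the convergence $A^i\to0$ applies, rather than relying on arbitrary lifts.
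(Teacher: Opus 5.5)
\textbf{The gap is in Step 2(a).} Your overall architecture matches the paper's, which follows \cite[Theorem 7.4]{flz26}:
\begin{itemize}
\item $\hat V^2\le\hat V^c\le\hat V^1$ from the inclusions of the control classes;
\item $\hat V=\hat V^c$ via \cref{appendix:lemma:optional_nonantic};
\item $\overline{\mathcal V}\le\hat V^2$ from the pointwise bound $J(\pi;\mathbf Z)\ge\mathcal V(\mathbf Z)$;
\item $\hat V^1\le\overline{\mathcal V}$ from a countable family of $\epsilon$-optimal controls.
\end{itemize}
The last inequality rests on Step 2(a), and so does the measurability of $\overline{\mathcal V}$ that you implicitly need for $\overline{\mathcal V}\le\hat V^2$. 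There you assert that $\mathbf Z\mapsto J(t,x,\pi;\mathbf Z)$ is continuous on $\mathscr C^{0,\alpha,1}$, uniformly in $\pi$, by RSDE stability and \cref{ass:weaker_stability}. But \cref{ass:weaker_stability} only gives sup-norm continuity $A^1,A^2\to0$ of the coefficients in $Z$. The rough stability estimate \cite[Theorem 4.9]{fhl21} additionally needs $(\gamma(\cdot,Z^1),\gamma'(\cdot,Z^1))$ and $(\gamma(\cdot,Z^2),\gamma'(\cdot,Z^2))$ to be close as \emph{controlled} vector fields, i.e.\ closeness of Gubinelli derivatives and $2\alpha$-remainders. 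These are the quantities $A^3,A^4$ of \cref{ass:rpde_viscos_2}, which this theorem deliberately does not assume. This genuinely costs regularity: in \cref{example:benefit_good_approximation}, $\gamma_t(x,Z)=\varphi(x,Z_t)$ fits the hypotheses, but the paper notes that \cref{ass:rpde_viscos_2} would require extra regularity such as $\varphi(x,\cdot)\in\operatorname{Lip}^{2+\epsilon}$. Without (a), neither $\mathcal V(\cdot)$ nor the family $J(\pi_k;\cdot)$ is known to be continuous, so you cannot choose a countable family that is dense in cost, and the argument stalls. With an extra assumption of the type in \cref{ass:rpde_viscos_2} your argument would go through, but removing that assumption is the point of the theorem.

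\textbf{How the paper avoids this.} It never varies the rough path inside the coefficients.
\begin{itemize}
\item It introduces an intermediate problem $\tilde{\mathcal V}(\cdot;Z,\tilde Z)$ in which the coefficients see $Z\in\mathcal C^\alpha$ while the integrator is a Lipschitz path $\tilde Z$. This is a classical SDE, so $J$ and $\tilde{\mathcal V}$ are continuous on $\mathcal C^\alpha\times\mathcal C^1$, uniformly in controls, using only $A^1,A^2$ and Gronwall.
\item The countable-family argument of \cite{flz26} is run at that level, giving $\tilde{\mathcal V}(W,\tilde W)\ge\hat V^{1,\tilde W}$ a.s.
\item It then takes $\tilde W=W^n$, the piecewise-linear good rough path approximation of $\mathbf W^{\text{Strat}}$ from \cref{appendix:ex:good_rp_Brownian}. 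Both sides pass to the limit, $\tilde{\mathcal V}(W,W^n)\to\overline{\mathcal V}$ and $\hat V^{1,W^n}\to\hat V^1$, by \cref{lemma:good_rp_RSDE}. That lemma keeps $W$ frozen in the coefficients and bounds the error uniformly over controls by $\rho_\alpha(\mathbf W^{\text{Strat}};(W^n,\Pi(W;W^n)))\to0$. This also exhibits $\overline{\mathcal V}$ as a limit of measurable random variables.
\end{itemize}

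\textbf{On your Step 1.} For $\pi\in\mathcal A^2$ the control anticipates $W$, so $X^{t,x,\bar\pi}$ is not an It\^o solution of the doubly-SDE. The identity you want to prove is simply how $X^{t,x,\bar\pi}$ is defined, through the randomized RSDE of \cref{rsde_existence_uniqueness_progressiveversion}. For non-anticipating controls it is the known consistency of randomized RSDEs with It\^o SDEs \cite{flz25,flz26}. Your Wong--Zakai route would need one of two things you do not have:
\begin{itemize}
\item the same unavailable continuity of $Z\mapsto(\gamma,\gamma')$, if $Z^n$ also enters the coefficients; or
\item a Wong--Zakai theorem for path-dependent random coefficients.
\end{itemize}
Also, \cref{thm.fixpoint} is an existence result, not a stability estimate. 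So the real obstacle is Step 2(a), not Step 1.
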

    \begin{proof}
        Note first, that $(\mathfrak{F}_{t})_{t \in [0,T]}$ is the same as the augmented filtration of the $m_{W}+ m_{B}$-dim. Brownian Motion $(W_{t}, B_{t})_{t \in [0,T]}$  on the space $(\Omega, \mathfrak{F}, \mathbb{P})$. Thus by \cref{appendix:lemma:optional_nonantic} for any $(\mathfrak{F}_{s})$-predictable control $(\pi_{s})_{s \in [0,T]}$ there is a Borel-measurable $\pi^{\text{det}}: [0, T]\times C([0,T]; \mathbb{R}^{m})\to U$ such that $\pi^{\text{det}}_{t}(B(\omega'), W(\omega''))=\pi^{\text{det}}_{t}(B_{\cdot \wedge t}(\omega'), W_{\cdot \wedge t}(\omega''))= \pi_{t}(\omega)$ for any $t \in [0,T]$, for $\mathbb{P}$-a.e. $\omega \in \Omega$. Thus denoting the set of such maps by  $\mathcal{U}^{\operatorname{bc}}$ and for $\pi \in \mathcal{U}^{bc}$, $\bar{\pi}_{t}=\pi_{t}(B_{\cdot \wedge t}, W_{\cdot \wedge t})$. Then for any $\pi \in \mathcal{U}^{bc}$ note that $\bar{\pi} \in \mathcal{A}^{c}$ and thus it holds for any $(t,x) \in [0,T] \times \mathbb{R}^{d}$ $\mathbb{P}$-a.s.
        \begin{equation*}
            \begin{aligned}
                \hat{V}^{1}_{t}(x) \geq \hat V_t (x)&= \essinf_{\pi \in {\mathcal U}}\mathbb E\left[ \int_t^T \hat l_s(X_s^{t,x,\pi},\pi_s) ds +\hat g(X_T^{t,x,\pi})   \bigg| \mathfrak{F}_T^W\right]\\
                &= \essinf_{\pi \in {\mathcal U}^{\operatorname{bc}}}\mathbb E\left[ \int_t^T \hat l_s(X_s^{t,x,\bar{\pi}},\bar{\pi}_s) ds +\hat g(X_T^{t,x,\bar{\pi}})   \bigg| \mathfrak{F}_T^W\right]\geq \hat{V}_{t}^{c}(x) \geq \hat{V}^{2}_{t}(x).
            \end{aligned}
        \end{equation*}
    From here on we omit the dependence on  $(t,x)$ for simplicity. The inequality $\bar{\mathcal{V}}\leq \hat V^{2}$ follows as in \cite[Theorem 7.4]{flz26}. 
    Thus it remains to show the inequality $\hat{V}^{1}\leq \bar{\mathcal{V}}$. First let $\tilde{W}:[0,T]\times \Omega'' \to \mathbb{R}^{d_{W}}$, $\mathfrak{B}([0,T])\otimes \mathfrak{F}^{''}_{T}/\mathfrak{B}^{d_{W}}$-measurable such that $\tilde{W}\in \mathcal{C}^{1}([0,T]; \mathbb{R}^{d_{W}})$ almost surely. We define for any $(Z, \tilde{Z})\in \mathcal{C}^{\alpha}\times \mathcal{C}^{1}$ the solution to the controlled SDE
    \begin{equation*}
    d\tilde{X}_{s}^{\pi; Z, \tilde{Z}}= \tilde{b}_{s} (\tilde{X}_{s}^{\pi; Z, \tilde{Z}}, \pi_{s}, Z_{\cdot \wedge s} ) ds + \sigma_{s} (\tilde{X}_{s}^{\pi; Z, \tilde{Z}}, \pi_{s},  Z_{\cdot \wedge s}) dB_{s}+ \gamma_{s}(\tilde{X}_{s}^{\pi; Z, \tilde{Z}},  Z_{\cdot \wedge s}) d\tilde{Z}_{s}; \quad \tilde{X}^{\pi; Z, \tilde{Z}}_{t}=x,  
    \end{equation*}
    where the latter integral is in Riemann-Stieltjes sense and $\pi \in \mathcal{A}$. Now let 
    \begin{equation*}
        J(\eta; Z, \tilde{Z})\coloneqq \mathbb{E}'\left[ \int_{t}^{T} \hat{l}_{s}(\tilde{X}_{s}^{ \pi; Z, \tilde{Z}}, \pi_{s}, Z_{\cdot \wedge s}) ds + \hat{g}(\tilde{X}_{T}^{\pi;  \mathbf{Z}}) \right]; \quad \tilde{\mathcal{V}}(; Z, \tilde{Z})\coloneqq \inf_{\eta \in \mathcal{A}} J( \eta; Z, \tilde{Z}). 
    \end{equation*}
    Now by Assumption \ref{assumption_RSDE_existence_uniqueness} and \ref{ass:weaker_stability} it follows that that both $\mathcal{V}, J$ are continuous in $(Z, \tilde{Z})\in \mathcal{C}^{\alpha}\times \mathcal{C}^{1}$ ($J$ even uniformly over $\eta \in \mathcal{A}$). Using this, one can perform an analogous argument to \cite[Theorem 7.4]{flz26} (ii) to show that $\tilde{\mathcal{V}}(W(\omega''), \tilde{W}(\omega''))\geq \hat V^{1, \tilde{W}}(\omega'')$ for $\mathbb{P}''$-a.e. $\omega'' \in \Omega''$, where $\bar{X}^{\pi\tilde{Z}}(\omega)\coloneqq \tilde{X}^{\pi;Z, \tilde{Z}}(\omega')|_{Z= W(\omega''), \tilde{Z}=\tilde{W}(\omega'')}$ and 
    \begin{equation*}
        \hat{V}^{1, \tilde{W}}(\omega'')\coloneqq \essinf_{\pi \in \mathcal{A}^{1}}\mathbb{E} \left[\int_{t}^{T} \hat{l}_{s}(\bar{X}_{s}^{ \bar{\pi}; \tilde{W}}, \bar{\pi}_{s}) ds + \hat{g}(\bar{X}_{T}^{\bar{\pi}}; \tilde{W}) \bigg|\mathfrak{F}^{W}_{T}\right](\omega'').
    \end{equation*}
    Now we know by \cref{appendix:ex:good_rp_Brownian} that $\mathbf{W}^{\operatorname{Strat}}$ a.s. has a piecewise linear good rough path sequence $\mathbf{W}^{n}$. Using then stability properties of $\mathcal{V}$ as in \cref{eq:good_rp_value_function} it follows for a.e. $\omega''\in 
    \Omega''$
    \begin{equation*}
        \bar{\mathcal{V}}(\omega'')=\mathcal{V}(\mathbf{W}^{\operatorname{Strat}}(\omega''))=\lim_{n\to \infty}\mathcal{V}(W(\omega''), W^{n}(\omega''))\geq \lim_{n \to \infty} \hat V^{1, W^{n}}(\omega'')\geq \hat{V}^{1}(\omega''), 
    \end{equation*}
    where for the last inequality we used that by \cite[Proposition 7.2]{flz26} and \cref{lemma:good_rp_RSDE} it holds
    \begin{equation*}
    \begin{aligned}
        |\hat{V}^{1}(\omega'')- \hat{V}^{1, W^{n}}(\omega'')|&\lesssim \esssup_{\pi \in \mathcal{A}^{1}}\left(\Vert \sup_{s\in [0,T]} |X_{s}^{\pi; Z, \tilde{Z}}- X_{s}^{\pi; (Z, \mathbb{Z}})\Vert_{L^{1}}\bigg|_{(Z, \mathbb{Z})=\mathbf{W}^{\operatorname{Strat}(\omega'')}, \tilde{Z}= W^{n}(\omega'')}\right)\\
        &\leq \rho_{\alpha}\left(\mathbf{W}^{\operatorname{Strat}(\omega'')}; (W^{n}(\omega''); \Pi(W; W^{n})(\omega''))\right) \stackrel{n \to \infty}{\rightarrow}0.
    \end{aligned}
    \end{equation*}
    \end{proof}
        Note that both $\mathcal{V}_{t}(x; \mathbf{Z}), \tilde{\mathcal{V}}_{t}(x; \mathbf{Z})$ are Markovian stochastic control problems with rough drivers and environments as the coefficients in \eqref{eq:path_dep_control_problem} only depend on the noise $(W_{t})_{t \in [0,T]}$, which is 'frozen' here and the controls $\pi \in \mathcal{A}$ only depend on the noise $B$. This way a rough dynamic programming principle and subsequently a corresponding rough HJB-equation can be recovered using the approach in \cite[Section 5 and 6]{flz26}. Let us focus on this approach next.
    \subsection{Viscosity solutions to the rough stochastic control problems}
    \label{subsection_Viscosity Solutions to the Rough Stochastic Control Problems}
    For the remainder of this section, we will exclusively focus on the rough stochastic control problem posed by $\tilde{\mathcal{V}}$. 
    Assuming suitable regularity of $\tilde{\mathcal{V}}$ one can, by applying the rough stochastic DPP \cite[Theorem 5.6]{flz26}, formally derive the corresponding 2nd-order rough HJB-Equation, assuming Stratonovich dynamics: 
    \begin{equation}
    \label{eq:Rough_HJB}
    \begin{aligned}
-d\tilde{\mathcal{V}}_{t}(x)&=\mathcal{H}_t( x ,D \tilde{\mathcal{V}}, D^2 \tilde{\mathcal{V}}, Z_{\cdot \wedge t}) d t+  D \tilde{\mathcal{V}}_{t}(x)\gamma_{t}(x, Z_{\cdot \wedge t}) d \mathbf{Z}_{t} \; \; \text{on} \; (0,T) \times \mathbb{R}^{d}\\
\tilde{\mathcal{V}}_T(x)&=\hat{g}(x, Z) \; \; \text{on} \; \mathbb{R}^{d}
\end{aligned}
\end{equation}
with the Hamiltonian $\mathcal{H}$ given by: 
\begin{equation*}
\begin{aligned}
&\mathcal{H}^{\text{cv}}_t(x, p,A ,u, z)\coloneqq \tilde{b}_t(x,u, z)\cdot p+\frac{1}{2}\operatorname{tr}[\sigma_t(\sigma_t)^{\top}(x,u, z)A]+\hat l_t(x,u, z)\\
&\mathcal{H}_t(x, p,A, z )\coloneqq\inf _{u \in U}\mathcal{H}^{\text{cv}}_t(x,p,A ,u, z).
\end{aligned}
\end{equation*}
As in the classical stochastic control framework, $\tilde{\mathcal{V}}$ will often not be sufficiently smooth to satisfy this equation in a strong sense, which leads naturally to the theory of \textit{viscosity solutions} for nonlinear rough PDEs (RPDEs) introduced in \cite{OberhauserCaruanaFriz2011}. 
\begin{definition}
\label{def:rough_viscosity_solution}
Let $\mathbf{Z} \in \mathscr{C}^{0,\alpha}_{g}([0,T]; 
    \mathbb{R}^{m_W})$. We say that $\tilde{\mathcal{V}}(\cdot, \cdot; \mathbf{Z})$ solves the equation \eqref{eq:Rough_HJB} if for any sequence of smooth paths $Z^{n}:[0,T]\to \mathbb{R}^{m_{W}}$ and canonical\footnote{by 'canonical' we mean $\mathbb{Z}^{n}_{s,t}\coloneqq \int_{s}^{t} \delta Z^{n}_{s,r} dZ^{n}_{r}$.} rough path lifts $\mathbf{Z}^{n}\coloneqq (Z^{n}, \mathbb{Z}^{n})_{n \in \mathbb{N}}$ with $\mathbf{Z}^{n}\to \mathbf{Z}$ in $\mathscr{C}^{0, \alpha}_{g}$ it holds $\tilde{\mathcal{V}}^{n} \to \tilde{\mathcal{V}}$ locally uniformly, where $\tilde{\mathcal{V}}(\cdot, \cdot; \mathbf{Z}^{n})$ is the unique viscosity solution (in the sense of Lions-Crandall) to 
    \begin{equation}
    \label{eq:smooth_HJB}
    \begin{aligned}
-\partial_t \tilde{\mathcal{V}}(x)&=\mathcal{H}_t( x ,D \tilde{\mathcal{V}}, D^2 \tilde{\mathcal{V}}, Z^{n}_{\cdot \wedge t}) +  D \tilde{\mathcal{V}}_{t}(x)\gamma_{t}(x, Z^{n}_{\cdot \wedge t}) \dot{Z^n_t} \; \; \text{on} \; (0,T) \times \mathbb{R}^{d}\\
\tilde{\mathcal{V}}_T(x)&=\hat{g}(x, Z) \; \; \text{on} \; \mathbb{R}^{d}. 
\end{aligned}
\end{equation}
\end{definition}
\begin{remark}
Note that in \cref{def:rough_viscosity_solution} we 'smooth out' the path-dependence on $Z$ everywhere in \eqref{eq:Rough_HJB}. At this point this is an arbitrary choice, after all one could also consider the limiting behavior of viscosity solutions to 
\begin{equation}
    \label{remark:eq:partial_smooth_Rough_HJB}
    \begin{aligned}
-\partial_{t}\tilde{\mathcal{V}}_{t}(x)&=\mathcal{H}_t( x ,D \tilde{\mathcal{V}}, D^2 \tilde{\mathcal{V}}, Z_{\cdot \wedge t}) + D\tilde{\mathcal{V}}_{t}(x)\gamma_t(x, Z_{\cdot \wedge t}) \dot{Z}^{n}_{t}  \; \; \text{on} \; (0,T) \times \mathbb{R}^{d},\\
\tilde{\mathcal{V}}_T(x)&=\hat{g}(x, Z) \; \; \text{on} \; \mathbb{R}^{d}. 
\end{aligned}
\end{equation}
Intuitively solutions to \eqref{remark:eq:partial_smooth_Rough_HJB} should even be 'closer' to the original value function $V$ as less quantities need to be approximated (after all by \cref{th:duality_stoch_viscosity} the rough path $\mathbf{Z}$ should be thought of as being 'given' to us). We will see in \cref{thm:good_rough_paths_viscosity} that this intuition is indeed correct. In general, it is not clear how $\gamma(\cdot, Z)$ can be seen as being controlled by the smooth path $Z^{n}$ (without resorting to imposing Young-regularity as in \cite{flz26}). We circumvent this issue by suitably employing the 'good' rough paths of \cite{coutin_2007} (see \cref{subsection:good_rough_paths}).
\end{remark}
In the following consider $\mathbf{Z}=(Z, \mathbb{Z}) \in \mathscr{C}^{0,\alpha}_{g}([0,T]; \mathbb{R}^{m_{W}})$ a fixed path. As we have seen in \cref{thm:randomization_value_function} it suffices to work on the filtered probability space $(\Omega' , \mathfrak{F}', (\mathfrak{F}'_{t})_{t\in [0,T]}, \mathbb{P}')$. Since we exclusively work with the measure $\mathbb{P}'$ for the remainder of this section,  all $L^{p}$-norms will be taken with respect to this measure i.e. $\Vert \cdot \Vert_{L^{p}}= \Vert \cdot \Vert_{L^{p}(\Omega'; \mathbb{P}')}$.\\
Since \cref{def:rough_viscosity_solution} is concerned with the stability of $\tilde{\mathcal{V}}$ under smooth approximations of the rough path, we need to impose the following stability properties in order to ensure stability of the controlled RSDE and the control problem itself.

\begin{hypothesis}    \label{ass:rpde_viscos_1}
   Suppose that for 
   \begin{equation*}
   \Theta(t,x, u, Z_{\cdot \wedge t})  \in \{\hat{l}(t,x, u, Z_{\cdot \wedge t}), \tilde{b}(t,x, u, Z_{\cdot \wedge t}), \gamma(t,x,  Z_{\cdot \wedge t}), \sigma(t,x, u, Z_{\cdot \wedge}), \hat{g}(x, Z)\}
   \end{equation*}
   it holds $
       [0,T] \times \mathbb{R}^{d} \times U \ni (t, x, u) \mapsto \Theta(t,x, u, Z_{\cdot \wedge t})$
   is uniformly continuous.
   Further suppose that there is $L>0$ such that 
   \begin{equation*}
       |\Theta(t, x, u)- \Theta(t,y, u)| \leq L |x-y|;\quad |\Theta(t, 0, u)| \leq L 
   \end{equation*}
   for any $t \in [0,T], x,y \in \mathbb{R}^{d}$ and $u \in U$.
\end{hypothesis}
\begin{hypothesis}
    \label{ass:rpde_viscos_2}
    Define for any $Z^{1}, Z^{2}\in \mathcal{C}^{\alpha}([0,T]; \mathbb{R}^{m_{W}})$
    \begin{equation*}
    \begin{aligned} 
    \sup_{t\in [0,T]}\Vert \gamma_{t}(\cdot, Z^{1})-\gamma_{t}(\cdot, Z^{2})\Vert_{C^{\beta}_{b}}+\sup_{t \in [0,T]}\Vert \gamma'_{t}(\cdot, Z^{1})-\gamma'_{t}(\cdot, Z^{2})\Vert_{C^{\beta-1}_{b}} &\eqqcolon A^{3}(Z^1; Z^2)\\
    [ \gamma(\cdot, \cdot, Z^{1}), \gamma'(\cdot, \cdot, Z^{1}); \gamma(\cdot, \cdot, Z^{2}), \gamma'(\cdot, \cdot, Z^{2})]_{Z^{1}, Z^{2}; 2\alpha} &\eqqcolon A^{4}(Z^1; Z^2)
    \end{aligned}
    \end{equation*}
    with the latter notation as in \cref{def:scvec}. Then assume that for any smooth sequence $(Z^{n})_{n \in \mathbb{N}}$ with $Z^{n}\to Z$ in $\mathcal{C}^{\alpha}$ it holds
        $\lim_{n\to \infty}A^{i}(Z; Z^{n})=0$ for any $i={1,3, 4}$ where $A^{1}$ is as in \cref{ass:weaker_stability}.
\end{hypothesis}
We first consider the well-posedness of \eqref{eq:Rough_HJB} in the 'classical' sense of \cref{def:rough_viscosity_solution}.
\begin{theorem}
\label{thm:rough_viscosity_classic}
    Suppose \cref{assumption_RSDE_existence_uniqueness}, \cref{ass:rpde_viscos_1} and \cref{ass:rpde_viscos_2} hold. Then $\tilde{\mathcal{V}}(t,x; \mathbf{Z})$ is the unique viscosity solution to \eqref{eq:Rough_HJB} in the sense of \cref{def:rough_viscosity_solution}. In particular for any sequence of smooth paths $(Z^{n})_{n \in \mathbb{N}}$ with canonical rough path lift $\mathbf{Z}^{n}\to \mathbf{Z}$ in $\mathscr{C}^{0, \alpha}_{g}$, \eqref{eq:smooth_HJB} has a unique viscosity solution given by $\tilde{\mathcal{V}}(\cdot, \cdot; \mathbf{Z}^{n})$ and 
    \begin{equation*}
    \begin{aligned}
        \sup_{t\in [0,T], x \in \mathbb{R}^{d}}|\tilde{\mathcal{V}}(t,x; \mathbf{Z})- \tilde{\mathcal{V}}(t,x;\mathbf{Z}^{n})|\lesssim & \sum_{i=1,3,4} A^{i}(Z; Z^{n})+ \rho_{\alpha}(\mathbf{Z}; \mathbf{Z}^{n})\stackrel{n \to \infty}{\longrightarrow} 0
    \end{aligned}
    \end{equation*}
\end{theorem}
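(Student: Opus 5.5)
The proof reduces to two ingredients: identifying the smooth-path value functions as classical viscosity solutions, and a quantitative stability estimate for the rough control problem in the driving path. Uniqueness then follows from \cref{def:rough_viscosity_solution}: any solution must be the locally uniform limit of the same sequence $\tilde{\mathcal{V}}(\cdot,\cdot;\mathbf{Z}^n)$.

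\textbf{Smooth paths.} For a smooth $Z^n$ with canonical lift $\mathbf{Z}^n$, the rough integral $\gamma_s(\tilde X_s,Z^n_{\cdot\wedge s})\,d\mathbf{Z}^n_s$ coincides with the Riemann--Stieltjes integral against $\dot Z^n_s\,ds$. So $\tilde X^{t,x,\pi;\mathbf{Z}^n}$ solves a classical controlled SDE with drift $\tilde b_s(x,u,Z^n_{\cdot\wedge s})+\gamma_s(x,Z^n_{\cdot\wedge s})\dot Z^n_s$. All coefficients are deterministic functions of $(s,x,u)$, since the path dependence is on the fixed path $Z^n$. By \cref{ass:rpde_viscos_1} they are uniformly continuous in $(t,x,u)$ and Lipschitz in $x$ uniformly in $(t,u)$; the extra term $\gamma\dot Z^n$ is Lipschitz with constant $L\|\dot Z^n\|_\infty$. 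The classical Markovian theory then applies: DPP, the Lions--Crandall characterisation of the value function as a viscosity solution of \eqref{eq:smooth_HJB}, and comparison in the class of bounded uniformly continuous functions. Together these show that $\tilde{\mathcal{V}}(\cdot,\cdot;\mathbf{Z}^n)$ is the unique viscosity solution of \eqref{eq:smooth_HJB}. I would also note that $\tilde{\mathcal{V}}(\cdot,\cdot;\mathbf{Z}^n)$, defined by the rough formula with $\pi\in\mathcal A$, equals this classical value function, because $\mathcal{A}$ consists exactly of the $(\mathfrak F'_t)$-predictable controls.

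\textbf{Stability estimate.} For any $\pi\in\mathcal{A}$, write $X=\tilde X^{t,x,\pi;\mathbf{Z}}$ and $X^n=\tilde X^{t,x,\pi;\mathbf{Z}^n}$. First, $|\inf_\pi J(\pi;\mathbf Z)-\inf_\pi J(\pi;\mathbf Z^n)|\le\sup_\pi|J(\pi;\mathbf Z)-J(\pi;\mathbf Z^n)|$. Then I split each cost difference into two parts:
\[
|\hat l_s(X_s,\pi_s,Z)-\hat l_s(X^n_s,\pi_s,Z^n)|\le L|X_s-X^n_s|+A^1(Z;Z^n),
\]
and the analogous bound for $\hat g$. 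Hence
\[
\sup_{t,x}|\tilde{\mathcal{V}}(t,x;\mathbf{Z})-\tilde{\mathcal{V}}(t,x;\mathbf{Z}^n)|\lesssim A^1(Z;Z^n)+\sup_{t,x,\pi}\Bigl\|\sup_s|X_s-X^n_s|\Bigr\|_{L^1}.
\]
The remaining term is controlled by the RSDE stability estimate underlying \cref{thm.fixpoint}, i.e.\ the local Lipschitz estimate of \cite{fhl21} for two different rough paths and two different coefficient pairs. It bounds $\|\sup_s|X_s-X^n_s|\|_{L^q}$ by $\rho_\alpha(\mathbf{Z};\mathbf{Z}^n)$ plus the distances of the coefficients. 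For the drift and $\sigma$ these distances are measured by $A^1$. For the rough coefficients $(\gamma,\gamma')(\cdot,Z)$ versus $(\gamma,\gamma')(\cdot,Z^n)$, which are controlled by different paths, they are measured by the sup-distance $A^3$ and the mixed controlled-vector-field distance $A^4$ of \cref{def:scvec}. The estimate must hold with constants uniform in $(t,x)$ and in $\pi$, since the control enters only through $b,\sigma$, which are uniformly bounded and Lipschitz. It must also be uniform in $n$, which follows because $\mathbf{Z}^n$ is bounded in $\mathscr{C}^\alpha$ and the controlled norms of $(\gamma,\gamma')(\cdot,Z^n)$ are bounded thanks to $A^3,A^4\to0$. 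By \cref{ass:rpde_viscos_2}, the right-hand side tends to $0$, which gives the displayed bound and the uniform convergence.

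\textbf{Main obstacle.} The hardest step is the RSDE stability estimate with coefficients controlled by different paths $Z$ versus $Z^n$. It requires comparing the two rough integrals through their germs $\gamma_s(X_s,Z)\delta Z_{s,t}+(D\gamma\,\gamma+\gamma')\mathbb Z_{s,t}$ and the corresponding germs for $Z^n$, and applying the stochastic sewing lemma to the difference. This is exactly where the quantity $A^4$ enters. I would first try to quote the corresponding stability statement from \cite{fhl21,flz25} in the appendix, extended to path-dependent $\gamma$ as in \cref{thm.fixpoint}, and only verify the uniformity in $\pi$ and in $(t,x)$ by hand.
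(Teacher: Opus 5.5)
Your proposal is correct and matches the paper's proof essentially step for step. The smooth-path problems are identified as classical Markovian control problems whose value functions are the unique Lions--Crandall viscosity solutions of \eqref{eq:smooth_HJB}; the paper cites \cite[Chapter 4, Theorems 5.2 and 6.1]{YongZhou1999} for this. The convergence then comes from splitting the cost difference into the $A^1$-terms plus $\sup_{\pi}\|\sup_s|\tilde X^{t,x,\pi;\mathbf Z}_s-\tilde X^{t,x,\pi;\mathbf Z^n}_s|\|_{L^1}$, and the paper handles this last term, as you do, by quoting the local Lipschitz stability of RSDEs from \cite[Theorem 4.9]{fhl21} together with \cref{ass:rpde_viscos_1} and \cref{ass:rpde_viscos_2}, uniformly in $(t,x)$ and $\pi$.
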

\begin{proof}
    Under the above assumptions \cite[Chapter 4, Theorem 5.2 and Theorem 6.1]{YongZhou1999} assert that $\tilde{\mathcal{V}}(\cdot, \cdot; \mathbf{Z}^{n})$ is the unique viscosity solution in the sense of Lions-Crandall of \eqref{eq:smooth_HJB}. The convergence to $\tilde{\mathcal{V}}$ now follows by noting that 
    \begin{equation*}
    \begin{aligned}
        &\big| \tilde{\mathcal{V}}_{t}(x; \mathbf{Z}) - \tilde{\mathcal{V}}_{t}(x; \mathbf{Z}^{n}) \big|\\
        &\lesssim \sup_{\pi \in \mathcal{A}} \bigg|\mathbb{E}'\left[ \int_t^T \hat{l}_s(X_s^{t,x,\pi, \mathbf{Z}},\pi_s, Z_{\cdot \wedge s})-\hat l_s(\tilde{X}_s^{t,x,\pi, \mathbf{Z}^{n}},\pi_s, Z^n_{\cdot \wedge s}) ds +\hat g(\tilde{X}_T^{t,x,\pi, \mathbf{Z}}, Z)- \hat{g}(\tilde{X}_T^{t,x,\pi, \mathbf{Z}^{n}}, Z^n) \right] \bigg| \\
        &\lesssim
        \int_{0}^{T} \sup_{u \in U}  \Vert \hat{l}(s,  \cdot, u, Z_{\cdot \wedge s})- \hat{l}(s,  \cdot, u, Z^n_{\cdot \wedge s}) \Vert_{\infty} ds +\Vert \hat{g}(\cdot, Z)- \hat{g}(\cdot, Z^n)\Vert_{\infty}\\
        &\qquad+ \sup_{\pi \in \mathcal{A}} \left \Vert \sup_{s \in [t,T]} |\tilde{X}^{t, x, \pi; \mathbf{Z}}_{s}- \tilde{X}^{t,x, \pi; \mathbf{Z}^{n}}_{s} | \right \Vert_{L^{1}}. 
    \end{aligned}
    \end{equation*}
     The latter term is vanishing, uniformly in $(t, x)$, by using the local Lipschitz continuity of solutions to RSDEs from \cite[Theorem 4.9]{fhl21} combined with the stability estimates from \cref{ass:rpde_viscos_1} and \cref{ass:rpde_viscos_2}.
\end{proof}
While the above approach is classical for viscosity solutions of RPDEs note that many stochastic control problems of interest will not satisfy the continuous dependence on the noise $W$ (resp. $Z$) assumed in \cref{ass:rpde_viscos_2}. This motivates the following result, which avoids \cref{ass:rpde_viscos_2}. 
\begin{theorem}
\label{thm:good_rough_paths_viscosity}
    Suppose \cref{assumption_RSDE_existence_uniqueness} and \cref{ass:rpde_viscos_1} hold. Further assume that there is a piece-wise linear sequence of good rough paths $(Z^{n})_{n \in \mathbb{N}}$ w.r.t. $\mathbf{Z}$ in the sense of \cref{def:good_rough_path}. Then for any $n\in \mathbb{N}$ denote by $\tilde{X^{t, x, \pi, n}}$ the unique solution of the controlled SDE 
    \begin{equation*}
    \begin{aligned}
    d\tilde{X}_{s}^{t, x, \pi, n}= &\tilde{b}_{s} (\tilde{X}_{s}^{t, x, \pi, n}, \pi_{s}, Z_{\cdot \wedge s} ) ds + \sigma_{s} (\tilde{X}_{s}^{t, x, \pi,n }, \pi_{s},  Z_{\cdot \wedge s}) dB_{s}+ \gamma_{s}(\tilde{X}_{s}^{t, x, \pi, n},  Z_{\cdot \wedge s}) dZ^{n}_{s}; \quad \tilde{X}^{t, x, \pi, n}_{t}=x 
    \end{aligned}
    \end{equation*}
    Then for any $n \in \mathbb{N}$ it holds 
    \begin{equation*}
        \tilde{\mathcal{V}}^{n}_{t}(x)\coloneqq \inf_{\pi \in \mathcal{A}} \mathbb{E}'\left[\int_{t}^{T} \hat{l}^{h}(\tilde{X}^{t,x, \pi, n}_s, \pi_s, Z_{\cdot \wedge s}) ds + \hat{g}(\tilde{X}^{t,x, \pi, n}_{T}, Z)\right]
    \end{equation*}
    is the unique viscosity solution to \begin{equation}
    \label{eq:partial_smooth_Rough_HJB}
    \begin{aligned}
-\partial_{t} \tilde{\mathcal{V}}_{t}(x)&= \mathcal{H}_{t}( x ,D \tilde{\mathcal{V}}, D^2 \tilde{\mathcal{V}}, Z_{\cdot \wedge t}) +D \tilde{\mathcal{V}}_{t}(x)\gamma_{t}(x, Z_{\cdot \wedge t})  \dot{Z}^{n}_{t}  \; \; \text{on} \; (0,T) \times \mathbb{R}^{d},\\
\tilde{\mathcal{V}}_T(x)&=\hat{g}(x, Z) \; \; \text{on} \; \mathbb{R}^{d}. 
\end{aligned}
\end{equation}
   Further it holds with the iterated Riemann-Stieltjes integral $\Pi(Z; Z^{n})_{s,t} \coloneqq \int_{s}^{t} \delta Z_{s, r} dZ^{n}_{r}$:
   \begin{equation*}
       \sup_{t\in [0,T], x \in \mathbb{R}^{d}}\big| \tilde{\mathcal{V}}^{n}_{t}(x)- \tilde{\mathcal{V}}_{t}(x; \mathbf{Z})\big| \lesssim \rho_{\alpha}(\mathbf{Z}; (Z^{n}, \Pi(Z; Z^{n})))\stackrel{n \to \infty}{\longrightarrow} 0.
   \end{equation*}
\end{theorem}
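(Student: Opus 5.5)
The argument has two parts. First, for fixed $n$, we identify $\tilde{\mathcal{V}}^n$ as a classical Lions--Crandall viscosity solution. Second, we compare $\tilde{\mathcal{V}}^n$ with $\tilde{\mathcal{V}}(\cdot,\cdot;\mathbf{Z})$ via a pathwise RSDE stability estimate, uniformly in the control.

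\emph{Step 1 (fixed $n$).} Since $Z^n$ is piecewise linear, $\dot Z^n$ is bounded and piecewise constant. We therefore rewrite the controlled equation for $\tilde X^{t,x,\pi,n}$ as an ordinary controlled SDE,
\[
d\tilde X_s=\bigl[\tilde b_s(\tilde X_s,\pi_s,Z_{\cdot\wedge s})+\gamma_s(\tilde X_s,Z_{\cdot\wedge s})\dot Z^n_s\bigr]ds+\sigma_s(\tilde X_s,\pi_s,Z_{\cdot\wedge s})dB_s .
\]
Here $Z$ is fixed and deterministic, so the path-dependence of the coefficients is just deterministic time dependence. By \cref{ass:rpde_viscos_1} this drift is Lipschitz in $x$ with linear growth. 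It is measurable in $t$, and continuous in $t$ on each linear piece of $Z^n$. The diffusion coefficient, running cost $\hat l$ and terminal cost $\hat g(\cdot,Z)$ are uniformly continuous and Lipschitz. Hence \cite[Chapter 4, Theorems 5.2 and 6.1]{YongZhou1999} apply, exactly as in the proof of \cref{thm:rough_viscosity_classic}, and yield the DPP and that $\tilde{\mathcal V}^n$ is the unique viscosity solution of \eqref{eq:partial_smooth_Rough_HJB}. The only point to check is the jumps of $\dot Z^n$ at the finitely many partition points. We handle this by solving backwards interval by interval: on each subinterval the Hamiltonian is continuous in $t$, and the terminal value of each piece is the (continuous) value function at its right endpoint. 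Alternatively, one can invoke the $L^1$-in-time version of the comparison principle.

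\emph{Step 2 (convergence).} Fix $\pi\in\mathcal A$. The Riemann--Stieltjes integral against $Z^n$ coincides with the rough integral against the rough path $(Z^n,\Pi(Z;Z^n))$. This is the mixed object underlying \cref{def:good_rough_path}: the integrand $\gamma(\cdot,Z_{\cdot\wedge s})$ is controlled by $Z$ and not by $Z^n$, and the cross-integral $\Pi(Z;Z^n)$ is exactly what makes sense of it. Consequently $\tilde X^{t,x,\pi,n}$ solves the same RSDE as $\tilde X^{t,x,\pi;\mathbf Z}$, with only the driving rough path replaced. We then invoke \cref{lemma:good_rp_RSDE}, the stability estimate already used in the proof of \cref{thm:randomization_value_function}. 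Combined with the local Lipschitz estimate \cite[Theorem 4.9]{fhl21}, it gives
\[
\sup_{\pi\in\mathcal A}\Bigl\|\sup_{s\in[t,T]}|\tilde X^{t,x,\pi;\mathbf Z}_s-\tilde X^{t,x,\pi,n}_s|\Bigr\|_{L^1}\lesssim \rho_\alpha\bigl(\mathbf Z;(Z^n,\Pi(Z;Z^n))\bigr).
\]
The implicit constant depends only on $L$, on $\|(\gamma,\gamma')\|$ in \cref{assumption_RSDE_existence_uniqueness}(iii), and on a bound for $\|\mathbf Z\|_\alpha$ and the good-rough-path norms. Crucially, it is independent of $(t,x,\pi)$, because all coefficient bounds are uniform in $u$ and $x$.

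\emph{Step 3 (value functions).} Because $\hat l$ and $\hat g$ are evaluated at the \emph{same} environment $Z$ in both problems, no $A^i$-type terms appear. Using $|\inf_\pi a_\pi-\inf_\pi b_\pi|\le\sup_\pi|a_\pi-b_\pi|$ and the Lipschitz property of $\hat l,\hat g$ from \cref{ass:rpde_viscos_1}, we obtain
\[
|\tilde{\mathcal V}^n_t(x)-\tilde{\mathcal V}_t(x;\mathbf Z)|\le L(T+1)\sup_{\pi}\Bigl\|\sup_{s}|\tilde X^{t,x,\pi;\mathbf Z}_s-\tilde X^{t,x,\pi,n}_s|\Bigr\|_{L^1}.
\]
Inserting Step 2 gives the claimed bound. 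The bound tends to $0$ by the definition of good rough paths.

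The main obstacle is Step 2, namely ensuring that the stability estimate for RSDEs driven by the mixed lift $(Z^n,\Pi(Z;Z^n))$ holds uniformly over controls. I would first verify that the proof of \cref{lemma:good_rp_RSDE} only uses bounds on $b$ and $\sigma$ that are uniform in $u$. I would also check that the first-order remainder of the Riemann--Stieltjes integral is controlled by $\Pi$ through a sewing-lemma argument in $L^{q,\infty}$, so that the constant does not depend on $\pi$.
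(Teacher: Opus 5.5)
Your outline is the paper's proof. Step 1 glues Yong--Zhou viscosity solutions backwards over the linear pieces of $Z^n$ via the DPP. Then, since $\hat l,\hat g$ see the same environment $Z$ in both problems, you reduce to a control-uniform RSDE stability estimate taken from \cref{lemma:good_rp_RSDE}. Steps 1 and 3 are fine.

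\textbf{The gap: Step 2.} The paper's \cref{lemma:good_rp_RSDE} has the same defect: \cref{lemma:good_rp_approx} is fine for $X$-controlled integrands, but $f(Y^n)$ is not $X$-controlled uniformly in $n$. The pair $(Z^n,\Pi(Z;Z^n))$ is not a rough path, since $\delta\Pi(Z;Z^n)_{s,u,t}=\delta Z_{s,u}\otimes\delta Z^n_{u,t}$. Moreover, the integrand $\gamma_s(\tilde X^{t,x,\pi,n}_s,Z_{\cdot\wedge s})$ is controlled by $Z$ only through its explicit time dependence. Through the state it moves with $Z^n$, with Gubinelli derivative $D\gamma\,\gamma$. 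The local expansion of the Riemann--Stieltjes integral is therefore
\[
\gamma_s\,\delta Z^n_{s,t}+\gamma'_s\,\Pi(Z;Z^n)_{s,t}+(D\gamma\,\gamma)_s\,\Pi(Z^n;Z^n)_{s,t}+\text{higher order}.
\]
So the sewing argument you plan to check necessarily produces $\|\Pi(Z^n;Z^n)-\mathbb{Z}\|_{2\alpha}$, which $\rho_\alpha(\mathbf Z;(Z^n,\Pi(Z;Z^n)))$ does not control.

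\textbf{Counterexample.} Let $\alpha<\tfrac12$ and $\mathbf Z=0$. Let $Z^n$ be polygonal loops of radius $r_n$ run at frequency $\omega_n\to\infty$, with $r_n=(\omega_n\log\omega_n)^{-1/2}$. Then $\rho_\alpha(\mathbf Z;(Z^n,\Pi(Z;Z^n)))=\|Z^n\|_\alpha\lesssim r_n\omega_n^{\alpha}\to0$ and $\|\Pi(Z^n;Z^n)\|_{2\alpha}\lesssim r_n^2\omega_n\to0$, so the sequence is good. Take $\gamma_1=e_1$ and $\gamma_2=(0,\phi(x_1))$ with $\phi$ bounded smooth and $\phi(y)=y$ near $0$. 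Take $\tilde b=\sigma=\hat l=0$ and $\hat g(x)=\psi(x_2)$ with $\psi$ bounded Lipschitz and $\psi(y)=y$ near $0$. Then
\[
\tilde{\mathcal V}^n_0(0)-\tilde{\mathcal V}_0(0;\mathbf Z)=\Pi(Z^n;Z^n)^{12}_{0,T}\approx\tfrac12 r_n^2\omega_n T,
\]
while $r_n^2\omega_n/(r_n\omega_n^{\alpha})\to\infty$. So neither your Step 2 bound nor the displayed rate holds with an $n$-independent constant.

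\textbf{The repair.} Use the full good-rough-path structure. View $\tilde X^{t,x,\pi,n}$ as the solution of an RSDE driven by the joint rough path $(\mathbf Z;Z^n)$ on $\mathbb R^{2m_W}$. Its vector field is $(z_1,z_2)\mapsto\gamma_s(\cdot,Z_{\cdot\wedge s})z_2$, with Gubinelli derivative $\gamma'$ acting in the first slot.
\begin{itemize}
\item For fixed $n$ the rough integral equals the Riemann--Stieltjes one, because $Z^n$ is Lipschitz.
\item The same equation driven by $(\mathbf Z;\mathbf Z)$ is exactly the $\mathbf Z$-RSDE, and the two controlled vector fields are at distance zero.
\item \cite[Theorem 4.9]{fhl21} applies uniformly in $(\pi,t,x)$, since all bounds on $\tilde b,\sigma$ are uniform in $u$.
\end{itemize}
Combined with your Step 3 this gives
\[
\sup_{t,x}\big|\tilde{\mathcal V}^n_t(x)-\tilde{\mathcal V}_t(x;\mathbf Z)\big|\lesssim\rho_\alpha\big((\mathbf Z;Z^n),(\mathbf Z;\mathbf Z)\big)\longrightarrow0,
\]
and the limit is $0$ precisely by the good-rough-path hypothesis. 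So the convergence claim survives. The rate, however, must be stated through the full joint lift, including the entries $\Pi(Z^n;Z^n)$ and $\Pi(Z^n;Z)$, not through $\Pi(Z;Z^n)$ alone.
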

\begin{proof}
    Since ${Z}^{n}$ is piece-wise linear we can choose a partition $0=t_{0}< t_{1} < \dots <t_{N}=T$ such that $\dot{Z}^{n, +}$ is constant on $[t_{i}, t_{i+1})$ for any $i=0, \dots, N-1$\footnote{We consider here the \emph{one-sided derivative} $\dot{Z}^{n, +}_{t}\coloneqq \lim_{h \downarrow 0}\frac{Z^{n}_{t+h}-Z^{n}_{t}}{h}$. On $(t_{i}, t_{i+1})$ this aligns of course with the usual derivative $\dot{Z}^{n}_{t}$.}. Setting $\tilde{\mathcal{V}}^{N, n}_{T}\equiv g$ we see that for any $i=0, \dots, N-1$ again by \cite[Chapter 4, Theorem 5.2 and Theorem 6.1]{YongZhou1999} the PDE
    \begin{equation*}
    \begin{aligned}
        -\partial_{t} \tilde{\mathcal{V}}^{i, n}_{t}(x)&= \mathcal{H}_t( x ,D \tilde{\mathcal{V}}^{i, n}, D^2 \tilde{\mathcal{V}}^{i,n}, Z_{\cdot \wedge t}) +(\gamma_{t}(x, Z_{\cdot \wedge t}) \cdot D \tilde{\mathcal{V}}^{i, n}_{t}(x)) \dot{Z}^{n}_{t}  \; \; \text{on} \; (t_{i},t_{i+1}) \times \mathbb{R}^{d},\\
\tilde{\mathcal{V}}^{i, n}_{t_{i+1}}&= \tilde{\mathcal{V}}^{i+1, n}_{t_{i+1}}\quad \text{on} \; \mathbb{R}^{d}
\end{aligned}
    \end{equation*}
    has a unique viscosity solution given for any $t \in [t_{i}, t_{i+1})$ by
    \begin{equation*}
        \tilde{\mathcal{V}}^{i,n}_{t}(x)\coloneqq \inf_{\pi \in \mathcal{A}} \mathbb{E}'\left[\int_{t}^{t_{i+1}} \hat{l}^{h}(\tilde{X}^{t,x,\pi, n}_s, \pi_s, Z_{\cdot \wedge s}) ds + \tilde{\mathcal{V}}^{i+1, n}_{t_{i+1}}(\tilde{X}_{t_{i+1}}^{t,x,\pi, n})\right].
    \end{equation*}
By the DPP we see that $\tilde{\mathcal{V}}^{0, n}\equiv \tilde{\mathcal{V}}^{n}$ and by the local nature of viscosity solutions we see that thus $\tilde{\mathcal{V}}^{n}$ is the unique viscosity solution to \eqref{eq:partial_smooth_Rough_HJB} on $[0,T]\times \mathbb{R}^{d}$. The convergence of $\tilde{\mathcal{V}}^{n}$ to $\tilde{\mathcal{V}}$ is due to  \cref{ass:rpde_viscos_1} as following the argument given in the proof of \cref{thm:rough_viscosity_classic} and then \cref{lemma:good_rp_RSDE}, we have
\begin{equation}
\label{eq:good_rp_value_function}    |\tilde{\mathcal{V}}^{n}_{t}(x)- \tilde{\mathcal{V}}_{t}(x; \mathbf{Z})|\lesssim \sup_{\pi \in \mathcal{A}} \left \Vert \sup_{s \in [t,T]} \big|\tilde{X}^{t, x,\pi, n }_{s}- \tilde{X}^{t,x, \pi; \mathbf{Z}}_{s} \big| \right \Vert_{L^{1}}\lesssim \rho_{\alpha}( \mathbf{Z}; (Z^{n}, \Pi(Z; Z^{n}))) \stackrel{n \to \infty}{\longrightarrow} 0,
\end{equation}
where all implicit constants are independent of $Z^{n}$. 
\end{proof}
\begin{remark}
    Note that by \cref{appendix:ex:good_rp_Brownian}, \cref{thm:good_rough_paths_viscosity} is applicable in the context of \cref{thm:randomization_value_function} given \cref{ass:rpde_viscos_1} holds. 
\end{remark}
A natural numerical scheme for solving \eqref{eq:Rough_HJB} would be to approximate either $\tilde{\mathcal{V}}^{n}$ or $\tilde{\mathcal{V}}(\cdot, \cdot; \mathbf{Z}^{n})$ by classical numerical schemes for suitably large $n \in \mathbb{N}$. This methodology was considered in \cite[Section 4]{bank_rough_2025} for linear RPDEs. We emphasize that approximating $\tilde{\mathcal{V}}^{n}$ is clearly advantageous compared to approximating $\tilde{\mathcal{V}}(\cdot, \cdot; \mathbf{Z}^{n})$ as seen in the following example. 
\begin{example}
\label{example:benefit_good_approximation}
    Let $\varphi \in C(\mathbb{R}^{d}\times \mathbb{R}^{m_{W}}; \mathcal{L}(\mathbb{R}^{m_{W}}; \mathbb{R}^{d}))$ such that $\varphi (\cdot, z)\in \operatorname{Lip}^{3}(\mathbb{R}^{d}; \mathcal{L}(\mathbb{R}^{m_{W}}; \mathbb{R}^{d}))$ uniformly over $z \in \mathbb{R}^{m_{W}}$ and $\varphi(x, \cdot) \in \operatorname{Lip}^{2}(\mathbb{R}^{m_{W}}; \mathcal{L}(\mathbb{R}^{m_{W}}; \mathbb{R}^{d}))$ uniformly over $x \in \mathbb{R}^{d}$. Then we define $\gamma_{t}(x, Z)\coloneqq \varphi(x, Z_{t})$ for any $Z\in \mathcal{C}^{\alpha}([0,T]; \mathbb{R}^{m_{W}})$ $(t, x)\in [0,T]\times \mathbb{R}^{d}$. Then for $\gamma'_{t}(x, Z)\coloneqq D_{z} \varphi(x, Z_{t})$ we immediately see by classical results on the composition of controlled rough paths with regular functions \cite[Chapter 7]{friz_2020}, that $(\gamma, \gamma')$ satisfy \cref{assumption_RSDE_existence_uniqueness} and\cref{ass:rpde_viscos_1} and thus \cref{thm:good_rough_paths_viscosity} is applicable under good rough paths assumptions and suitable assumptions on the other involved coefficients.  In order to also satisfy \cref{ass:rpde_viscos_2}, however, we would need to impose additional regularity of $\varphi$ for instance $\varphi(x, \cdot) \in \operatorname{Lip}^{2+\epsilon}(\mathbb{R}^{m_{W}}; \mathcal{L}(\mathbb{R}^{m_{W}}; \mathbb{R}^{d}))$ uniformly over $x$ for some $\epsilon> 0$. This would yield, following the proof of \cite[Theorem 7.6]{friz_2020}, a modulus of continuity 
    \begin{equation*}
        [ \gamma(\cdot, \cdot, Z^{1}), \gamma'(\cdot, \cdot, Z^{1}); \gamma(\cdot, \cdot, Z^{2}), \gamma'(\cdot, \cdot, Z^{2})]_{Z^{1}, Z^{2}; 2\alpha} \lesssim \Vert Z^{1}- Z^{2}\Vert_{\alpha}^{\epsilon}.
    \end{equation*}
    Under Brownian randomization this would in the context of \cref{thm:rough_viscosity_classic} and \cref{thm:good_rough_paths_viscosity} yield convergence rates of the form 
    \begin{equation*}
    \begin{aligned}
        \left \Vert  \sup_{t\in [0,T], x \in \mathbb{R}^{d}}\big| \tilde{\mathcal{V}}^{n}_{t}(x)- \tilde{\mathcal{V}}_{t}(x; \mathbf{W}^{\text{Strat}})\big| \right \Vert_{L^{2}}  &
    \lesssim R(n)^{-1}; \\
        \left \Vert  \sup_{t\in [0,T], x \in \mathbb{R}^{d}}\big| \tilde{\mathcal{V}}_{t}(x; (\mathbf{W}^{\text{Strat}})^{n})- \tilde{\mathcal{V}}_{t}(x; \mathbf{W}^{\text{Strat}})\big| \right \Vert_{L^{2}}   &\lesssim R(n)^{-\epsilon},
        \end{aligned}
        \end{equation*}
        where $(\mathbf{W}^{\text{Strat}})^{n}$, the piece-wise linear approximation of (enhanced) Brownian motion over a sequence of partitions $(\mathcal{P}^{n})$ of $[0,T]$, is the good rough path sequence considered here. Here $R(n)$ is the usual error term of such approximations see \cite[Lemma 4]{coutin_2007}. Therefore leveraging the good rough paths structure yields \emph{better rates of convergence} while imposing \emph{less regularity assumptions} on $\gamma$. 
\end{example}
To close off this section let us clarify, why  \cref{ass:rpde_viscos_1} is reasonable in the setting of \cref{th:duality_smooth_semimartingale} in an approximative sense: 
\begin{lemma}
    Suppose the assumptions of \cref{th:duality_smooth_semimartingale} are satisfied. Then for any $h \in \mathcal{H}$ with components $(\mathfrak{d}_{t}h, \mathfrak{d}_{\omega}h)$ there are sequences $(a^{n})_{n}, (b^{n})_{n}, (c^{n})_{n},(d^{n})_{n} \in \mathcal{S}^{2}(\Lip(\mathbb{R}^{d}))$  such that it holds almost surely 
    \begin{equation}
    \label{lemma:approximation_convergence}
        \mathbb{E}\left [\hat{V}_{t}(x; Dh, D^{2}h, \mathfrak{d}_{t} h , D(\mathfrak{d}_{\omega} h))|\mathfrak{F}^{W}_{t} \right]= \lim_{n \to \infty} \mathbb{E}\left [\hat{V}_{t}(x; a^{n}, b^{n}, c^{n}, d^{n})|\mathfrak{F}^{W}_{t} \right], 
    \end{equation}
    where 
    \begin{equation*}
    \hat V_t(x;a,b,c,d):= \essinf_{\pi \in {\mathcal U}}\mathbb E\left[ \int_t^T \hat l_s(X_s^{t,x,\pi},\pi_s; a,b,c,d) ds   \bigg| \mathfrak{F}_T^W\right], 
  \end{equation*}
    where $\hat l_s(x,\pi; a,b,c,d)\coloneqq c_{s}(x) +H_s^{\text{cv}}(x,d_s(x), a_s(x) , b_s(x),\pi)$
    In particular note that assuming further that \cref{assumption_RSDE_existence_uniqueness} holds, the coefficients $\{\tilde{b}, \sigma, \gamma, \hat{l}(\cdot; a^{n}, b^{n}, c^{n}, d^{n})\}$ satisfy \cref{ass:rpde_viscos_1} for any $n \in \mathbb{N}$. 
\end{lemma}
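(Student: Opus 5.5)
The plan is to approximate the four random fields $Dh$, $D^2h$, $\mathfrak{d}_t h$, $D(\mathfrak{d}_\omega h)$ by Lipschitz-in-$x$ fields, uniformly in probability on compacts. Then I will show that the cost functional is stable under such approximation, uniformly over controls. Since $h\in\mathcal{C}^2_\mathfrak{F}$ (\cref{def:smooth-semimartingales}), each of these fields is continuous in $(t,x)$ almost surely and predictable.

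\textbf{Step 1 (construction).} I will mollify in $x$ with a standard mollifier $\rho_\varepsilon$, truncate with a smooth cutoff $\chi_R$, and then clip the values at level $n$, for instance $a^n:=\Psi_n\big(\chi_{R_n}\,(Dh*\rho_{1/n})\big)$ with $\Psi_n$ a $1$-Lipschitz projection onto the ball of radius $n$. The same recipe gives $b^n$, $c^n$, $d^n$. These are predictable, bounded by $n$, and Lipschitz in $x$ with a constant depending on $n$ and $\omega$. Hence they lie in $\mathcal{S}^2(\Lip(\mathbb{R}^d))$ after a localisation by stopping times if needed. By continuity of the fields, $a^n\to Dh$ locally uniformly in $(t,x)$ almost surely, and likewise for $b^n,c^n,d^n$.

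\textbf{Step 2 (stability).} For fixed $\pi$, let $\hat l^n_s$ denote the running cost built from $(a^n,b^n,c^n,d^n)$. The difference $|\hat l_s-\hat l^n_s|$ at $X^{t,x,\pi}_s$ is controlled by $|\gamma|\,|D\mathfrak{d}_\omega h-d^n|+|b|\,|Dh-a^n|+|c-c^n|+|a|\,|D^2h-b^n|$, using \cref{hp:regularity_coeff}. I split on the event $\{\sup_s|X^{t,x,\pi}_s|\le K\}$ and its complement. On the first event the locally uniform convergence gives a bound that is uniform in $\pi$. On the complement I use the moment bound $\sup_\pi\mathbb{E}\sup_s|X_s^{t,x,\pi}|^2\le C(1+|x|^2)$, which holds because the coefficients are bounded, together with polynomial growth of $h$ and its derivatives. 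Since the essential infimum is $1$-Lipschitz under perturbations that are uniform in $\pi$, I obtain
\[|\hat V_t(x;Dh,\dots)-\hat V_t(x;a^n,\dots)|\le \esssup_{\pi}\mathbb{E}\Big[\int_t^T|\hat l_s-\hat l^n_s|(X_s^{t,x,\pi},\pi_s)\,ds\,\Big|\,\mathfrak{F}^W_T\Big].\]
Taking $\mathbb{E}[\cdot|\mathfrak{F}^W_t]$ and applying dominated convergence then yields \eqref{lemma:approximation_convergence}.

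\textbf{Step 3 (Lipschitz property).} I will check that for each fixed $n$ the cost $\hat l(\cdot;a^n,b^n,c^n,d^n)$ satisfies \cref{ass:rpde_viscos_1}. It is a finite sum of products of bounded Lipschitz fields: $b,l,\gamma,\Sigma$ by \cref{hp:regularity_coeff}, and the truncated $a^n,\dots,d^n$. Uniform continuity in $(t,x,u)$ follows from the continuity of the mollified fields and the compact support of $\chi_{R_n}$. The other coefficients are covered by \cref{assumption_RSDE_existence_uniqueness}.

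\textbf{Main obstacle.} I expect the hardest step to be the uniformity over $\pi$ in Step 2 under only conditional expectation given $\mathfrak{F}_T^W$. Here the locally uniform convergence is only almost sure in $\omega$, not uniform. Conditioning on $\mathfrak{F}_T^W$ freezes $\omega''$ and integrates out only $B$, so the argument should run $\omega''$-wise. The tail estimate then needs growth bounds on $h$ that hold for almost every $\omega''$; I will obtain these from the continuity of $h$, possibly after a localisation in $\omega$. A second point is that the Lipschitz constants in Step 3 are random, so I may have to interpret \cref{ass:rpde_viscos_1} pathwise in $Z=W(\omega'')$, which is consistent with how that assumption is used in \cref{thm:good_rough_paths_viscosity}.
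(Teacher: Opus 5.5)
Step 1 rests on a false premise. For a general $h\in\mathcal{H}$, \cref{def:smooth-semimartingales} puts only $h$ itself in $\mathcal{S}^2(C_b(\mathbb{R}^d))$. The fields $Dh$, $D^2h$, $\mathfrak{d}_t h$, $D(\mathfrak{d}_\omega h)$ lie only in $\mathcal{L}^2(C_b(\mathbb{R}^d))$: they are continuous in $x$ for a.e.\ $(t,\omega)$ but merely square integrable in $t$. The continuity in \cref{ass:peng} concerns $V$, not $h$, and does not cover $D^2V$ or $D(\mathfrak{d}_\omega V)$ in any case.

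Mollifying in $x$, cutting off and clipping create no regularity in time. So your $a^n,\dots,d^n$ are not continuous $\Lip(\mathbb{R}^d)$-valued processes, and hence not elements of $\mathcal{S}^2(\Lip(\mathbb{R}^d))$ as the statement requires. Their Lipschitz constants are also of order $\|Dh_t\|_\infty\|\nabla\rho_{1/n}\|_{L^1}$, and $\sup_t\|Dh_t\|_\infty$ need not be finite. Localising with stopping times repairs neither defect, and the uniform continuity in $t$ claimed in Step 3 fails for the same reason.

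The missing ingredient is a regularisation in time. This is exactly what the paper takes from \cref{approx_coroll_smooth_semimart}, built on \cref{appendix_smooth_time,appendix_smooth_time_space}:
approximate in $\mathcal{L}^2(C(K))$ by simple predictable processes with values in a dense subspace; replace the time indicators by continuous functions; and extend from $K$ to $\mathbb{R}^d$ by a McShane-type Lipschitz extension followed by truncation.
A causal average $n\int_{(t-1/n)\vee 0}^{t}(\cdot)_r\,dr$ taken before your spatial mollification would serve equally well. You then only get $\int_t^T\sup_{|y|\le K}|Dh_s(y)-a^n_s(y)|\,ds\to0$ (along a subsequence, a.s.) instead of locally uniform convergence in $(t,x)$. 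That still suffices for the on-event part of Step 2.

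Step 2 has a second gap, which you flag as the main obstacle but do not resolve. The bound $\sup_\pi\mathbb{E}\sup_s|X^{t,x,\pi}_s|^2\le C(1+|x|^2)$ is unconditional. What you need is $\mathbb{E}\big[\esssup_\pi\mathbb{E}[\int_t^T|\hat l_s-\hat l^n_s|(X^{t,x,\pi}_s,\pi_s)\,ds\mid\mathfrak{F}^W_T]\big]\to0$. The esssup sits between the two expectations, and controls cannot be pasted along $\mathfrak{F}^W_T$-measurable sets, so an unconditional bound, even one uniform in $\pi$, does not control this quantity.

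What is needed is conditional tightness uniform in the control: $\esssup_\pi\mathbb{P}(\sup_s|X^{t,x,\pi}_s|>K\mid\mathfrak{F}^W_T)\to0$ a.s.\ as $K\to\infty$. Growth of $h$ plays no role. All four fields are bounded in $x$ for a.e.\ $(s,\omega)$, and being $\mathfrak{F}^W$-predictable, their sup-norms factor out of the conditional expectation.

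For the $dB$-part this tightness follows from BDG, because $B$ remains a Brownian motion for $(\mathfrak{F}^B_s\vee\mathfrak{F}^W_T)_s$. For the $dW$-part it requires $\gamma$ to be control-free. Take $\gamma(x,u)=u$, $U=[-1,1]$, $b=\sigma=0$: for $\mathfrak{F}^W$-predictable $\pi$ the conditional probability is $\mathbf 1\{|X^{t,x,\pi}_T|>K\}$, and a Girsanov argument shows that the esssup of these indicators is $1$ a.s.\ for every $K$.

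Under \cref{assumption_RSDE_existence_uniqueness} you can close the step $\omega''$-wise. Identify the conditional dynamics with the RSDE driven by $\mathbf{W}^{\text{It\^{o}}}(\omega'')$, as in the randomisation behind \cref{thm:randomization_value_function}, and use the a priori estimates of \cite{fhl21}. These are uniform in $\pi$ because only $b,\sigma$ depend on it.

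Your per-control splitting is the right structure here. Do not trade it for a single event like the paper's $A_n=\{\esssup_\pi\sup_s|X^{t,x,\pi}_s|\in B_n\}$: the same Girsanov argument with $\sigma(x,u)=u$ gives $\esssup_\pi\sup_s|X^{t,x,\pi}_s|=+\infty$ a.s.\ for $t<T$.
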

\begin{proof}Let $n \in \mathbb{N}$ and let $B_{n}\subset \mathbb{R}$ be the centered ball of radius $n$. Then by \cref{approx_coroll_smooth_semimart} \eqref{eq:appendix_convergence} we can choose $(a^{n})_{n} \subset \mathcal{S}^{2}(\Lip(\mathbb{R}^{d}))$ such that $\Vert Dh-a^{n}\Vert_{C(B_{n})}\leq \frac{1}{n}$ and $\Vert a^{n}\Vert_{C_{b}(\mathbb{R}^{d})}\leq \Vert h \Vert_{\mathcal{S}^{2}(C_{b}^{2}(\mathbb{R}^{d}))}+ \nicefrac{1}{n}$. Choosing analogously $b^{n},c^{n},d^{n} \in \mathcal{S}^{2}(\Lip(\mathbb{R}^{d}))$ we follow that on $A_{n}\coloneqq \{ \esssup_{\pi} \sup_{s \in [0,T]} |X^{t, x, \pi}_{s}|\in B_{n} \}$ it holds
\begin{equation*}
    \mathbb{E} \left [ \bigg| \hat{V}_{t}(x; Dh, D^{2}h, \mathfrak{d}_{t} h , D(\mathfrak{d}_{\omega} h))- \hat{V}_{t}(x; a^{n}, b^{n}, c^{n}, d^{n})\bigg|\mathbbm{1}_{A_{n}}\right ]\leq \frac{1}{n}
\end{equation*}
and on the complement it holds 
\begin{equation*}
   \mathbb{E} \left [ \bigg| \hat{V}_{t}(x; Dh, D^{2}h, \mathfrak{d}_{t} h , D(\mathfrak{d}_{\omega} h))- \hat{V}_{t}(x; a^{n}, b^{n}, c^{n}, d^{n})\bigg|\mathbbm{1}_{A_{n}^{c}}\right ]\lesssim \frac{1}{n} + \mathbb{P}(A_{n}^{c}),
\end{equation*}
where the implicit constant depends only on global bounds on $h$ and it's components i.e. $\Vert h \Vert_{\mathcal{S}^{2}(C^{2}_{b}(\mathbb{R}^{d}))}$ etc. Taking $n \to \infty$ and using $\mathbb{P}(A_{n}^{c})\to 0$ yields \eqref{lemma:approximation_convergence} in $L^{1}$-sense and thus convergence almost surely upon taking a suitable subsequence. The coefficients depending on $(a^{n}, \dots, d^{n})$ being sufficiently regular to satisfy \cref{ass:rpde_viscos_1} in the case of given \cref{assumption_RSDE_existence_uniqueness} is a direct consequence of boundedness and random bounded Lipschitzness.
\end{proof}
An analogous claim in the context of \cref{th:duality_stoch_viscosity} can be shown by additionally approximating the penalized terminal cost $g-h_{T}$.
 \appendix
 \addtocontents{toc}{\protect\setcounter{tocdepth}{1}}
\section{Notation}\label{notations}
\subsection{Random variables.} For any topological space $V$ we denote by $\mathfrak{B}(V)$ the corresponding Borel-$\sigma$-algebra. Let $(\Omega, \mathfrak{F}), (A, \mathfrak{A})$ be two measurable spaces. We say a map $X: \Omega \to A$ is $\mathfrak{F}/\mathfrak{A}$ measurable if for any $B \in 
\mathfrak{A}$, $X^{-1}(B)\in \mathfrak{F}$. Let $(\Omega, \mathfrak{F},(\mathfrak{F}_t)_{t \in [0,T]}, \mathbb{P})$ be a complete filtered probability space carrying an $m$-dimensional Wiener process $W=\left\{W_t: t \in[0, T]\right\}$  such that $(\mathfrak{F}_t)_{t \in [0,T]}$ is the natural filtration generated by $W$ and augmented by all the $\mathbb{P}$-null sets in $\overline{\mathfrak{F}}$. We denote by $\mathfrak{P}$ the $\sigma$-algebra of the predictable sets on $[0, T] \times \Omega$ associated with $(\mathfrak{F}_t)_{t \geq 0}$. 
 \subsection{Function spaces}
Let $(B,\|\cdot\|_B)$ be a Banach space. For a sub-$\sigma$-algebra
$\mathfrak{G}\subseteq\mathfrak{F}$, we say $X: \Omega \to B$ is strongly $\mathfrak{G}/\mathfrak{B}(B)$-measurable, if it is $\mathfrak{G}/\mathfrak{B}(B)$-measurable and separably valued.  For $p \in[1, \infty]$, $\mathcal{S}^p(\mathbb{B})$ is the set of all the $\mathbb{B}$-valued, $\mathfrak{P}$-measurable continuous processes $\left\{\mathcal{X}_t\right\}_{t \in[0, T]}$ such that
$
\|\mathcal{X}\|_{\mathcal{S}^p(\mathbb{B})}:=\left\|\sup _{t \in[0, T]}\left\| \mathcal{X}_t\right\|_{\mathbb{B}}\right\|_{L^p(\Omega, \mathfrak{F}, \mathbb{P})}<\infty
$. Denote by $\mathcal{L}^p(\mathbb{B})$ the space of all the $\mathbb{B}$-valued, $\mathfrak{P}$-measurable processes $\left\{\mathcal{X}_t\right\}_{t \in[0, T]}$ such that
\begin{equation*}
\|\mathcal{X}\|_{\mathcal{L}^p(\mathbb{B})}:=\left\|\left(\int_0^T\left\|\mathcal{X}_t\right\|_{\mathbb{B}}^p d t\right)^{1 / p}\right\|_{L^p(\Omega, \mathfrak{F}, \mathbb{P})}<\infty.
\end{equation*}
For $k\in\mathbb N_0, l \in \mathbb{N}$, let $C^k(\mathbb R^d; \mathbb{R}^{l}),C_b^k(\mathbb R^d; \mathbb{R}^{l})$ denote the space of $\mathbb{R}^{l}$-valued 
$k$-times continuously differentiable functions and $k$-times continuously differentiable functions whose derivatives up
to order $k$ are bounded. When $l=1$ we simply write $C^k(\mathbb R^d),C_b^k(\mathbb R^d)$. spatial derivatives are denoted by $D$.
We further set
$C_0(\mathbb R^d)
 :=
 \{f\in C_b(\mathbb R^d): f(x)\to0
       \text{ as }|x|\to\infty\}.$
 For each $(k, q) \in \mathbb{N}_0 \times[1, \infty]$ we denote by $H^{k, q}=H^{k, q}(\mathbb R^d)$ the $k$ the Sobolev space. $\operatorname{BUC}(\mathbb{R}^{d})$ denotes the space of $\mathbb{R}$-valued bounded uniformly continuous functions on $\mathbb{R}^{d}$. The space $\Lip(\mathbb{R}^{d}; \mathbb{R}^{l})$ denotes the $\mathbb{R}^{l}$-valued bounded and Lipschitz-continuous functions, with analogous convention for $l=1$ as above. We set $\Vert \varphi \Vert_{\Lip}\coloneqq \Vert \varphi \Vert_{\infty}+ \sup_{x \neq y}\frac{|\varphi(x)-\varphi(y)|}{|x-y|}$. We say $\varphi \in \Lip^{\gamma}$ for $\gamma= n+ \beta$ with $n\in \mathbb{N}, \beta \in (0,1]$ if $\varphi \in C^{n}$ and $D^{n}\varphi \in \mathcal{C}^{\beta}$ $\beta$-Hölder continuous. 
\subsection{Path spaces:}
Consider a fixed time-horizon $T>0$. Let
$
\Delta_T \coloneqq \{(s,t): 0 \le s < t \le T \}.
$
Let $\gamma \in (0, \infty)$. We say that $X \in \mathcal{C}^{\gamma}([0,T];V)$ if it
is $\lfloor \gamma \rfloor$ times continuously differentiable with $\lfloor
\gamma \rfloor$-th derivative H\"older continuous of exponent $\{ \gamma \} =
\gamma - \lfloor \gamma \rfloor \in (0, 1]$ (In particular, $X \in
\mathcal{C}^1$ means Lipschitz rather than continuously differentiable). For $\gamma \in (0,1]$ we denote by $\Vert X \Vert_{\gamma; [0,T]}$ the $\gamma$-Hölder semi-norm on $[0,T]$. For $A: \Delta_{T}\to V$ and $\gamma \in (0, \infty)$ we say that $A\in \mathcal{C}^{\gamma}_{2}([0,T]; V)$ if $\sup_{(s,t) \in \Delta_{T}}\frac{|A_{s,t}|}{|t-s|^{\gamma}}< \infty$. For a path $X: [0,T]\to V$ we denote by $\delta X_{s,t}\coloneqq X_{t}-X_{s}$ and for $A:\Delta_{T}\to V$, $\delta A_{s,u,t}\coloneqq A_{s,t}-A_{s,u}-A_{u,t}$.
\

\subsection{Rough paths} 
Here we we revisit for the convenience of the reader notation from classical (deterministic) rough path theory following \cite{friz_2020}. For an exposition on \textit{rough stochastic analysis} we refer to \cref{appendix_section_rsde_control}. 
Let $V$ be a finite-dim. Banach space. $\otimes$ denotes the tensor product. We denote the space of $V$-valued rough paths $\mathbf{X}\coloneqq (X, \mathbb{X})$ on $[0,T]$ by $\mathscr{C}^{ \alpha}([0,T]; V)$. $$\mathbf{X}=(X, \mathbb{X})\in \mathscr{C}^{\alpha}([0,T]; V), \qquad 
[\mathbf{X}] := (\delta X) \otimes (\delta X) - 2 \operatorname{Sym}(\mathbb{X}).$$
We denote by $\mathscr{C}_g^{\alpha}= \{ \mathbf{X} \in \mathscr{C}^{\alpha}| [\mathbf{X}]\equiv 0 \}$, the space of weakly geometric rough paths, and by $\mathscr{C}^{0, \alpha}_{g}$ we denote the space of geometric rough paths. We denote by $\mathscr{C}^{0, \alpha}$ the Polish space of $\alpha$-Hölder rough paths obtained as $\rho_{\alpha}$-closure (see below) of smooth rough paths.
 We denote by $\mathscr{C}^{0, \alpha,1}\coloneqq \{\mathbf{X}\in \mathscr{C}^{0, \alpha}: \Vert[\mathbf{X}]\Vert_{C^{1}}< \infty \}$ the (Polish) space of rough paths with continuously differentiable bracket and by $\mathfrak{C}^{\alpha}_{T}$ its Borel-$\sigma$-algebra.
The space of   
$X$-controlled $W$-valued rough paths is denoted by $\mathscr{D}^{2\alpha}_{X}([0,T]; W)$. We define the \textit{step-2 tensor space} $T^{2}\coloneqq V \oplus V^{\otimes 2}$. For two $T^{2}$-valued pairs of processes $(X, \mathbb{X}), (Y, \mathbb{Y})$ where $X, Y$ are $V$-valued paths and $\mathbb{X}, \mathbb{Y}: \Delta_{T}\to V^{\otimes 2}$ we define
\begin{equation*}
\begin{aligned}
    \rho_{\alpha}((X, \mathbb{X}), (Y, \mathbb{Y}))&\coloneqq \sup_{(s,t)\in \Delta_{T}} \frac{|\delta X_{s,t}- \delta Y_{s,t}|}{|t-s|^{\alpha}}+ \sup_{(s,t)\in \Delta_{T}} \frac{|\mathbb{X}_{s,t}- \mathbb{Y}_{s,t}|}{|t-s|^{2\alpha}}.
\end{aligned}
\end{equation*}
For a $d$-dim. Brownian Motion $W$ we denote by $\mathbf{W}^{\text{It\^{o}}}=(W, \mathbb{W}^{\text{It\^{o}}})$, $\mathbf{W}^{\text{Strat}}=(W, \mathbb{W}^{\text{Strat}})$ the \textit{It\^{o}}- resp. \textit{Stratonovich-lift} of $W$, where $\mathbb{W}^{\text{It\^{o}}}_{s,t}\coloneqq \int_{s}^{t} \delta W_{s,r} \otimes dW_{r}, \mathbb{W}^{\text{Strat}}_{s,t}\coloneqq \int_{s}^{t} \delta W_{s,r} \otimes \circ dW_{r}$.
\section{Duality for the Markovian case}\label{sec: duality_markvian_appendix}
In this section, we  simplify the setting of the paper to doubly controlled \emph{Markovian} SDEs and provide a generalization of the approach in \cite{DiehlFrizGassiat2017,bank_duality_2026} to this setting. {This allows us to apply the arguments of the present paper in the standard case, without the restrictions due to the presence of  random coefficients (i.e. \cref{hp:regularity_coeff}).}

Consider a controlled Markovian SDE 
$$dX_s^{t, x, \pi}=b_{s}(X_s^{t, x, \pi},\pi_s)ds+\sigma_{s} (X_s^{t, x, \pi},\pi_s)dW_s+\gamma_{s} (X_s^{t, x, \pi},\pi_s)dB_s , \quad X_t^{t, x, \pi}=x\in \mathbb R^d,$$
where $W,B$ are independent Brownian motions, $b, \sigma, \gamma$ are deterministic but otherwise as in \cref{sec:duality} and $\pi\in \mathcal U=\{\pi \colon [0, T]\times \Omega \to U \textit{ is }(\mathfrak{F}_s) \textit{-predictable} \}$ are admissible controls. Define the value function
$$V_{t}(x):=\inf_{\pi \in \mathcal U}\mathbb E\left[\int_t^T l_{s}(X_s^{t,x,\pi},\pi_s) ds+g(X_T^{t,x,\pi}) \right].$$
Setting $\Sigma \coloneqq [\sigma,\nu]$,
$H_{t}^{\text{cv}}(x,p,Z,u)\coloneqq b_{t}(x,u)  p+ \frac 1 2 \operatorname{tr}( (\Sigma \Sigma^{\top})_{t}(x,u) Z) + l_{t}(x,u)$ and $H_{t}(x,p,Z)\coloneqq\inf _{\pi \in U}H^{\text{cv}}_{t}(x,p,Z,\pi),$ 
the HJB equation is
\begin{equation}\label{eq:HJB_determinsitic}
    \partial_t v+H_{t}(x,D_{x} v,D^2_{x} v)=0, \quad  \forall(t, x) \in(0, T) \times \mathbb R^d, \quad v(T, x) =g(x), \quad  \forall x \in \mathbb R^d .
\end{equation}
We will assume that the value function $V \in C^{1,2}((0, T) \times \mathbb{R}^d) \cap C((0, T] \times \mathbb{R}^d)$ so that it is a classical solution of \eqref{eq:HJB_determinsitic}.  For  $h \in C^{1,2}( (0,T)\times \mathbb R^d)$, we will denote
\begin{small}
\begin{equation*}
\begin{aligned}
M_{t,T}^{t,x,\pi,h} &:=h_{T}(X_T^{t,x,\pi})-h_{t}(x)-\int_t^T \partial_{s} h_{s}(X^{t, x, \pi}_{s})+D h_{s}(X_s^{t,x,\pi})b_{s}(X_s^{t,x,\pi},\pi_s) \\
&\quad + \frac 1 2 \operatorname{tr} \left( (\Sigma\Sigma^{\top})_{s}(X_s^{t,x,\pi},\pi_s) D^{2} h_{s}(X_s^{t,x,\pi}) \right) ds\\
&=\int_t^T D h_{s}(X_s^{t,x,\pi}) \sigma_{s}(X_s^{t,x,\pi},\pi_s) dB_s+\int_t^T D h_{s}(X_s^{t,x,\pi})  \gamma_{s}(X_s^{t,x,\pi},\pi_s) dW_s,
\end{aligned}
\end{equation*}
\end{small}
where we have used  It\^{o}'s formula. In this setting, we have the following result, whose proof is the same as the one of \cref{th:duality_stoch_viscosity}. The theorem can also be stated in the non-smooth setting as in  \cite{bank_duality_2026}.
\begin{theorem}\label{th:duality_C12}Let $V \in C^{1,2}((0, T) \times \mathbb{R}^d) \cap C((0, T] \times \mathbb{R}^d)$, then almost surely
\begin{small}
\begin{align*}
V_{t}(x)&=\max_{h \in \mathcal H} V^{1, h}_{t}(x)=\max_{h \in \mathcal H}  V^{2, h}_{t}(x),
\end{align*}
\end{small}
where $\mathcal H:=\{h \in C^{1,2}( (0,T)\times \mathbb R^d): h_{T}(x)=g(x)  \}$ and
\begin{align*}
& V^{1, h}_{t}(x):=\mathbb E\Big[\essinf_{\pi \in {\mathcal U}} \mathbb E\left[ \int_t^T l_{s}(X_s^{t,x,\pi},\pi_s) ds+g(X_T^{t,x,\pi})  - M_{t,T}^{t,x,\pi,h} \bigg| \mathfrak F_T^W\right]  \Big]
,\\&\quad \quad \quad \quad \equiv  h_{t}(x) + \mathbb E\Bigg[\essinf_{\pi \in {\mathcal U}}\mathbb E\bigg[ \int_t^T\partial_s h_{s}(X_s^{t,x,\pi})+H^{\text{cv}}_{s}(X_s^{t,x,\pi},D h_{s}(X_s^{t,x,\pi}) ,D^{2} h_{s}(X_s^{t,x,\pi}),\pi_s)    ds \bigg| \mathfrak F_T^W\bigg] \Bigg],\\
&V^{2, h}_{t}(x):=h_{t}(x) + \int_t^T \inf_{y}  \left[  \partial_s h_{s}(y) +  H_{s}(y,D h_{s}(y),D^{2}h_{s}(y)) \right] ds.
\end{align*}
\end{theorem}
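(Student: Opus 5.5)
The plan follows the proof of \cref{th:duality_smooth_semimartingale}, with the It\^{o}--Kunita--Wentzell formula replaced by the classical It\^{o} formula. Everything is now deterministic, so there is no $\mathfrak{d}_\omega$-term. Fix $(t,x)$ and $h\in\mathcal H$. Apply It\^{o}'s formula to $h_s(X^{t,x,\pi}_s)$ on $[t,T]$ to get the decomposition of $M^{t,x,\pi,h}_{t,T}$ written above. It expresses $h_T(X_T)-h_t(x)$ as $\int_t^T(\partial_s h_s+\text{generator})\,ds$ plus stochastic integrals against $B$ and $W$.

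\textbf{Step 1: the upper bound.} Because $h_T=g$, for every admissible $\pi$ we can write
\begin{equation*}
\mathbb E\Big[\int_t^T l_s\,ds+g(X_T)\Big]=h_t(x)+\mathbb E\Big[\int_t^T\big(\partial_s h_s+H^{\text{cv}}_s(X_s,Dh_s,D^2h_s,\pi_s)\big)ds\Big].
\end{equation*}
This uses $\mathbb E[M^{t,x,\pi,h}_{t,T}]=0$. Insert the tower property through $\mathfrak F^W_T$. Then use $\mathbb E[\cdot\,|\,\mathfrak F^W_T]\ge\essinf_{\pi}\mathbb E[\cdot\,|\,\mathfrak F^W_T]$ and take the infimum over $\pi$; this gives $V_t(x)\ge V^{1,h}_t(x)$. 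As in the main theorem, \cref{cor:conditional_essinf} justifies exchanging the outer expectation with the essinf. Next, bound the integrand pointwise: $\partial_sh_s(X_s)+H^{\text{cv}}_s(\dots,\pi_s)\ge\inf_y[\partial_sh_s(y)+H_s(y,Dh_s(y),D^2h_s(y))]$. This gives $V^{1,h}_t(x)\ge V^{2,h}_t(x)$, and here $V^{2,h}$ is deterministic.

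\textbf{Step 2: attainment.} Take $h=V$, which lies in $\mathcal H$ by the standing regularity assumption and $V_T=g$. Since $V$ is a classical solution of \eqref{eq:HJB_determinsitic}, we have $\partial_sV+H_s(y,DV,D^2V)=0$ for every $y$. Hence $V^{2,V}_t(x)=V_t(x)$. Combined with Step 1, this gives $V_t(x)=\max_h V^{1,h}_t(x)=\max_h V^{2,h}_t(x)$, with the maximum attained at $h=V$.

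\textbf{Main obstacle.} The difficulty is technical: the integrability needed for $M^{t,x,\pi,h}$ to be a true martingale with zero mean, for a general $h\in C^{1,2}((0,T)\times\mathbb R^d)$ that has no growth bounds and no continuity up to time $T$. I would handle this by localization. Stop at $\tau_n=\inf\{s:|X_s|\ge n\}\wedge(T-1/n)$ and apply It\^{o} on $[t,\tau_n]$, where the stochastic integrals are true martingales. Then pass to the limit using dominated convergence, together with $h\in C((0,T]\times\mathbb R^d)$ and the polynomial moment bounds on $X$ that the Lipschitz and bounded coefficients provide. If the limit passage fails for some $h$, I would restrict $\mathcal H$ implicitly to those $h$ for which the expressions are well defined (for example $h$ with bounded derivatives), as the paper does in \cref{sec:duality}. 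This costs nothing for the conclusion, provided $V$ itself belongs to that class.
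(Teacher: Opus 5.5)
Your proposal is correct and follows the paper's own argument, which just refers back to the proof of \cref{th:duality_stoch_viscosity}: It\^{o}'s formula replaces It\^{o}--Kunita--Wentzell, $\mathbb E[M^{t,x,\pi,h}_{t,T}]=0$ is combined with the tower property through $\mathfrak F^W_T$, the integrand is bounded pointwise by $\inf_y$, and the maximum is attained at $h=V$ via the HJB equation. The integrability restriction on $\mathcal H$ that you hedge about, and that the paper does not address, is in fact necessary rather than a precaution, and it is best imposed on $Dh$ only (e.g.\ $Dh$ bounded and $h$ continuous up to $T$): for $d=1$, $b=l=g=0$ and unit diffusion, a Tychonoff-type nonzero solution of $\partial_s h+\frac12 h''=0$ with $h_T=0$ lies in $\mathcal H$ and gives $V^{2,h}_t(0)=h_t(0)>0=V_t(0)$, while bounded $Dh$ already makes $M^{t,x,\pi,h}$ a true martingale and still admits $V$, whose second derivatives may blow up as $t\to T$.
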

We note that the inner optimization problem of $V^{1,h}$ can under suitable assumptions be considered as a rough stochastic control problem  following \cref{section_rough_HJB}, which we spell out for the convenience of the reader.
\begin{coroll}
\label{cor:Markovian_rough_val}
Suppose that 
    $(b, \sigma, \gamma, l): [0,T]\times \mathbb{R}^{d}\times U \to \mathbb{R}^{d} \times \mathcal{L}(\mathbb{R}^{m_{B}}; \mathbb{R}^{d})\times \mathcal{L}(\mathbb{R}^{m_{W}}; \mathbb{R}^{d})$
are continuous such that 
\begin{enumerate}
    \item $b,\sigma, l$ are globally bounded and Lipschitz in $\mathbb{R}^{d}$, uniformly over $[0,T]\times U$.
    \item $\gamma$ is independent of $\pi$. 
    \item There is $\beta> \nicefrac{1}{\alpha}$ and  $\gamma'$ such that $(\gamma, \gamma')\in \mathscr{D}^{2\alpha}_{Z}C_{b}^{\beta}, (D\gamma, D\gamma')\in \mathscr{D}^{2\alpha}_{Z}C_{b}^{\beta-1}$ for any $Z \in \mathcal{C}^{\alpha}([0,T]; \mathbb{R}^{m_{W}})$. 
\end{enumerate}
Then for any $\mathbf{Z}\in \mathscr{C}^{\alpha}([0,T]; \mathbb{R}^{m_{W}}), q \in[2, \infty)$ and control $\pi \in \mathcal{U}$ the controlled RSDE 
\begin{equation*}
\begin{aligned}
    dX_{s}^{t, x, \pi; \mathbf{Z}}= b_{s} (X_{s}^{t, x, \pi; \mathbf{Z}}, \pi_{s}) ds + \sigma_{s} (X_{s}^{t, x, \pi; \mathbf{Z}}, \pi_{s}) dB_{s}+ \gamma_{s}(X_{s}^{t, x, \pi; \mathbf{Z}}) d\mathbf{Z}_{s}; \quad X^{t, x, \pi; \mathbf{Z}}_{t}=x 
\end{aligned}
\end{equation*}
has a unique $L_{q, \infty}$-solution $X^{t,x,\pi, \mathbf{Z}}$ and further defining the rough stochastic control problem for any $h \in \mathcal{H}$
\begin{equation}
\label{eq:Markov_rough_stochastic_val}
        \mathcal{V}_{t}(x; \mathbf{Z})\coloneqq \inf_{\pi \in \mathcal{A}} \mathbb{E}'\left[ \int_{t}^{T} \partial_s h_{s}(X_s^{t,x,\pi; \mathbf{Z}})+H^{\text{cv}}_{s}(X_s^{t,x,\pi; \mathbf{Z}},D h_{s}(X_s^{t,x,\pi; \mathbf{Z}}) ,D^2 h_{s}(X_s^{t,x,\pi; \mathbf{Z}}),\pi_s)  ds \right]
\end{equation}
it holds for any $t\in [0,T]$ and $x \in \mathbb{R}^{d}$ almost surely 
\begin{equation*}
    \mathcal{V}_{t}(x; \mathbf{W}^{\text{It\^{o}}})=\esssup_{\pi \in {\mathcal U}}\mathbb E\bigg[ \int_t^T\partial_s h_{s}(X_s^{t,x,\pi})+H^{\text{cv}}_{s}(X_s^{t,x,\pi},D h_{s}(X_s^{t,x,\pi}) ,D^{2} h_{s}(X_s^{t,x,\pi}),\pi_s)    ds \bigg| \mathfrak F_T^W\bigg] 
\end{equation*}
\end{coroll}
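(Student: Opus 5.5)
The plan is to obtain \cref{cor:Markovian_rough_val} as a direct specialization of \cref{rsde_existence_uniqueness_progressiveversion} and \cref{thm:randomization_value_function} to coefficients that do not depend on the $W$-path. Throughout, I read the right-hand side as an $\essinf$ over $\pi\in\mathcal U$, i.e.\ as the inner problem of $V^{1,h}$ in \cref{th:duality_C12}; the $\esssup$ in the displayed statement appears to be a typo. The data are
\[
\hat l_s(x,u):=\partial_s h_s(x)+H^{\text{cv}}_s\bigl(x,Dh_s(x),D^2h_s(x),u\bigr),\qquad \hat g\equiv 0 .
\]
All coefficients are regarded as functionals of a path $Z$ that are constant in $Z$. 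The Gubinelli derivative is $\gamma'\equiv 0$, and $(\gamma,\gamma')$ is exactly the pair in hypothesis (3).

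\textbf{Step 1: checking the hypotheses.} \cref{progressive_assumption} is immediate, since the coefficients are deterministic and $\omega$-independent. The strong measurability into $C_b(\mathbb R^d\times U;V)$ follows from Borel measurability in $t$. Separability of the range is needed; I would obtain it from the continuity in $t$ assumed in the corollary, using $U$ compact together with uniform continuity. In \cref{assumption_RSDE_existence_uniqueness}:
\begin{itemize}
\item item (i) is hypothesis (2);
\item item (iv) is trivial because $\hat g=0$;
\item item (iii) is hypothesis (3), with path-dependence $f_t(\cdot,Z)=f_t(\cdot,Z_{\cdot\wedge t})$ holding trivially;
\item item (ii) asks that $b,\sigma,\hat l$ composed with a predictable control be random bounded Lipschitz. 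For $b$ and $\sigma$ this is hypothesis (1).
\end{itemize}
\cref{ass:weaker_stability} holds trivially, because $A^1\equiv A^2\equiv 0$ when nothing depends on $Z$.

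\textbf{Step 2: existence, uniqueness and identification.} \cref{rsde_existence_uniqueness_progressiveversion} then yields, for every $q\in[2,\infty)$, a unique $L_{q,\infty}$-solution $X^{t,x,\pi;\mathbf Z}$ together with its $\mathfrak C^\alpha_T$-predictable version. Here $\pi\in\mathcal A$ is regarded as an element of $\mathcal A^1$. For a general $\pi\in\mathcal U$, as stated, the same fixed-point argument of \cref{thm.fixpoint} applies for each fixed $\mathbf Z$. Next, \cref{thm:randomization_value_function} gives
\[
\overline{\mathcal V}_t(x)=\mathcal V_t(x;\mathbf W^{\text{It\^o}})=\hat V_t(x)\quad\text{a.s.},
\]
where $\hat V$ is precisely the conditional $\essinf$ on the right-hand side. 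It is the It\^o lift that appears, because the Markovian dynamics are written in It\^o form: we use $\mathcal V$ with the uncorrected drift $b$, not $\tilde{\mathcal V}$. One should also check that $\mathcal V$ in \eqref{eq:Markov_rough_stochastic_val} coincides with $\mathcal V$ of \cref{thm:randomization_value_function} for the above data. This is a matter of definitions, since the cost $\hat l$ does not depend on $Z$.

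\textbf{Main obstacle.} The hard part is the regularity of $\hat l$ required in \cref{assumption_RSDE_existence_uniqueness}(ii). Membership $h\in C^{1,2}$ alone does not make $\partial_s h+H^{\text{cv}}(\cdot,Dh,D^2h,u)$ bounded and Lipschitz in $x$ uniformly in $(s,u)$. Only $b,\sigma,\gamma,l$ are controlled by hypotheses (1) and (3); $Dh$, $D^2h$ and $\partial_s h$ are not. My first attempt is to restrict to those $h\in\mathcal H$ for which $\partial_s h$, $Dh$ and $D^2h$ are bounded and Lipschitz in $x$. For these, $\hat l$ is bounded Lipschitz because products and sums of bounded Lipschitz functions are bounded Lipschitz, and $\Sigma\Sigma^\top$ is bounded Lipschitz since $\gamma\in C^\beta_b$ with $\beta>1$. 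For general $h$, I would fall back on the truncation and approximation argument from the last lemma of \cref{section_rough_HJB}. That argument approximates $Dh,D^2h,\partial_sh$ on balls $B_n$ by bounded Lipschitz fields, applies the above to each approximation, and passes to the limit. Passing to the limit requires the a.s.\ confinement of $X$ to $B_n$ with probability tending to one, both for the SDE and for the RSDE evaluated at $\mathbf W^{\text{It\^o}}$.
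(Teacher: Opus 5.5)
Your proposal follows the paper's route. The paper's entire proof is the remark that \cref{cor:Markovian_rough_val} is the deterministic-coefficient special case of \cref{thm:randomization_value_function}, which is exactly your specialization with $\hat l=\partial_s h+H^{\text{cv}}(\cdot,Dh,D^2h,\cdot)$ and $\hat g\equiv 0$, and you are right to read the $\esssup$ as an $\essinf$.

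You are also right that \cref{assumption_RSDE_existence_uniqueness}(ii) needs $\partial_s h$, $Dh$, $D^2h$ bounded and Lipschitz in $x$, a hypothesis the paper leaves implicit. However, your fallback for arbitrary $h\in\mathcal H$ does not close as sketched: the paper's approximation lemma bounds the contribution of $A_n^c$ only through global $C^2_b$-type bounds on $h$, and for a general $h\in C^{1,2}((0,T)\times\mathbb R^d)$ the cost need not even be integrable. The corollary should therefore be read for $h$ with such bounds.
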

\begin{proof}
    This is a special case of \cref{section_rough_HJB} and in particular \cref{thm:randomization_value_function} for deterministic coefficients.
\end{proof}
\begin{remark}
We emphasize, that \cref{cor:Markovian_rough_val} generalizes \cite[Theorem 16]{DiehlFrizGassiat2017} in two ways:
\begin{enumerate}
    \item It disentangles the use of rough analysis from classical stochastic analysis. Notably \cref{th:duality_C12}, is a purely probabilistic claim, making no use of rough paths at all. However as 
    shown throughout \cref{section_rough_HJB} rough paths provide a suitable toolbox to tackle the inner control problem. 
    \item It allows to consider (partially) controlled diffusion coefficients, which is not possible in the setting of \cite{DiehlFrizGassiat2017} due to the reasons mentioned in \cref{remark:controlled_diffusion}.
\end{enumerate}
The latter however comes at the cost of considering a 2nd-order nonlinearity in the rough HJB equation corresponding to \eqref{eq:Markov_rough_stochastic_val} see \cref{subsection_Viscosity Solutions to the Rough Stochastic Control Problems}. 
\end{remark}
\section{A compendium on measure theory}\label{measure_theory}
Here we collect results from measure theory used throughout the work. 
\subsection{Regular conditional distributions and essential infima}
Suppose that $\Omega$ is Polish and denote by  $(\Omega, \mathfrak{F}, (\mathfrak{F}_{t})_{t \in [0,T]}, \mathbb{P})$ a filtered probability space on $\Omega$ with $\mathfrak{F}=\mathfrak{B}(\Omega)$.
\begin{lemma}
\label{lemma:cond_distr_nullsets}
Let $t \in [0,T]$ and $\mathbb{P}_{t}$ be a version of the regular conditional distribution $\mathbb{P}(\cdot| 
\mathfrak{F}_{t})$\footnote{which exists as $\Omega$ is Polish and equipped with $\mathfrak{B}(\Omega)$}. Then for any $A \in \mathfrak{F}$, it holds $\mathbb{P}_{t}(A)=0$ $\mathbb{P}$-a.s. iff $\mathbb{P}(A)=0$. 
\end{lemma}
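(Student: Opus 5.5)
The plan is to use the defining property of regular conditional distributions, namely that $\omega\mapsto\mathbb{P}_{t}(A)(\omega)$ is an $\mathfrak{F}_t$-measurable version of $\mathbb{E}[\mathbbm{1}_A\mid\mathfrak{F}_t]$, together with the tower property. Fix $A\in\mathfrak{F}$. Since $\mathbb{P}_t(\cdot)(\omega)$ is a probability measure for every $\omega$ and $\mathbb{P}_t(A)=\mathbb{P}(A\mid\mathfrak{F}_t)$ $\mathbb{P}$-a.s., integrating over $\Omega$ gives
\begin{equation*}
\mathbb{E}\left[\mathbb{P}_t(A)\right]=\mathbb{E}\left[\mathbb{E}[\mathbbm{1}_A\mid\mathfrak{F}_t]\right]=\mathbb{P}(A).
\end{equation*}
This identity carries both directions of the equivalence.

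For the direction ``$\Leftarrow$'', assume $\mathbb{P}(A)=0$. Then the nonnegative random variable $\mathbb{P}_t(A)$ has expectation zero, and hence $\mathbb{P}_t(A)=0$ $\mathbb{P}$-a.s. For the direction ``$\Rightarrow$'', assume $\mathbb{P}_t(A)=0$ $\mathbb{P}$-a.s. Then $\mathbb{P}(A)=\mathbb{E}[\mathbb{P}_t(A)]=0$.

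There is no real obstacle here. The only point needing care is that the version $\mathbb{P}_t$ is fixed once and for all, independently of $A$. This is exactly what makes the statement meaningful for every $A$ simultaneously, with an $A$-dependent null set outside of which the equality holds. The measurability of $\omega\mapsto\mathbb{P}_t(A)(\omega)$, needed to take the expectation, is part of the definition of a regular conditional distribution. Existence of $\mathbb{P}_t$ follows from $\Omega$ being Polish with $\mathfrak{F}=\mathfrak{B}(\Omega)$, as noted in the footnote.
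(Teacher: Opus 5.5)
Your proof is correct and is essentially the paper's argument: both directions follow from the tower-property identity $\mathbb{E}[\mathbb{P}_t(A)]=\mathbb{P}(A)$ together with the nonnegativity of $\mathbb{P}_t(A)$. Nothing further is needed.
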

\begin{proof}
    Suppose $\mathbb{P}_{t}(A)=0$ $\mathbb{P}$-a.s. Then it holds $\mathbb{P}(A)=\mathbb{E}[\mathbb{P}_{t}(A)]=0$. Conversely if $0=\mathbb{P}(A)=\mathbb{E}[\mathbb{P}_{t}(A)]$, then using that $\mathbb{P}_{t}(A)\geq 0$ $\mathbb{P}$-a.s. it follows $\mathbb{P}_{t}(A)=0$ $\mathbb{P}$-a.s.
\end{proof}
Importantly, we conclude that essential suprema/infimia w.r.t. $\mathbb{P}$ and $\mathbb{P}_{t}$ align.
\begin{coroll}
\label{cor:conditional_essinf}
    Let $A$ be a set of parameters and consider $X: \Omega \times A \to \mathbb{R}$ such that for any $a\in A$ $X(\cdot, a)$ is $\mathfrak{F}/\mathfrak{B}$-measurable. Let $\mathbb{P}_{t}$ be 
    as in \cref{lemma:cond_distr_nullsets}. Then the essential infimum w.r.t. the conditional distribution $\mathbb{P}_{t}$ is defined as a random variable $Y\in L^{0}(\Omega,\mathfrak{F}; \mathbb{R})$ such that
    \begin{enumerate}
        \item \label{item1_essinf} $\mathbb{P}_{t}(Y \leq X(a)) =1$ $\mathbb{P}$-a.s., for any $a \in A$
        \item \label{item2_essinf} $\mathbb{P}_{t}({Y \geq \tilde Y})=1$ $\mathbb{P}$-a.s., for any random variable $\tilde{Y}\in L^{0}(\Omega,\mathfrak{F}; \mathbb{R})$ satisfying \eqref{item1_essinf}. 
    \end{enumerate}
     Such a $Y$ exists and it holds $\mathbb{P}$-almost-surely
    \begin{equation*}
        \essinf_{t; a \in A}X(\cdot, a) \coloneqq Y=  \essinf_{a \in A}X(\cdot, a).
    \end{equation*}
\end{coroll}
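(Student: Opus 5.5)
The claim has two parts: existence of $Y$, and the almost-sure identity $Y=\essinf_{a\in A}X(\cdot,a)$ with the unconditional essential infimum taken w.r.t.\ $\mathbb{P}$. I will not build $Y$ from scratch. Instead I will show that the classical $\mathbb{P}$-essential infimum $Y^\ast:=\essinf_{a\in A}X(\cdot,a)$, which exists by the standard theorem (e.g.\ via a countable subfamily $(a_k)$ with $Y^\ast=\inf_k X(\cdot,a_k)$ a.s.), satisfies properties \eqref{item1_essinf} and \eqref{item2_essinf}. I will then show that any $Y$ satisfying both properties coincides with $Y^\ast$ $\mathbb{P}$-a.s. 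This gives existence and the identity simultaneously.

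The single tool is \cref{lemma:cond_distr_nullsets}. It says that for every $A'\in\mathfrak{F}$ we have $\mathbb{P}_t(A')=0$ $\mathbb{P}$-a.s.\ iff $\mathbb{P}(A')=0$. Equivalently, passing to complements, $\mathbb{P}_t(A')=1$ $\mathbb{P}$-a.s.\ iff $\mathbb{P}(A')=1$. Consequently, for any random variables $Y_1,Y_2$, the statement ``$\mathbb{P}_t(Y_1\le Y_2)=1$ $\mathbb{P}$-a.s.'' is equivalent to ``$Y_1\le Y_2$ $\mathbb{P}$-a.s.'' The event $\{Y_1\le Y_2\}$ lies in $\mathfrak{F}$ because all variables are $\mathfrak{F}$-measurable. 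This holds for one fixed version $\mathbb{P}_t$; I will fix that version throughout, so the statements do not depend on it.

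With this translation the argument is short. Property \eqref{item1_essinf} for $Y$ becomes $Y\le X(a)$ $\mathbb{P}$-a.s.\ for all $a$, i.e.\ $Y$ is a $\mathbb{P}$-a.s.\ lower bound of the family. Property \eqref{item2_essinf} becomes: $Y\ge\tilde Y$ $\mathbb{P}$-a.s.\ for every $\tilde Y$ that is a $\mathbb{P}$-a.s.\ lower bound. These are exactly the defining properties of the classical essential infimum. Hence $Y^\ast$ satisfies \eqref{item1_essinf}--\eqref{item2_essinf}, which gives existence. Conversely, if $Y$ satisfies both properties, then $Y\le Y^\ast$ and $Y^\ast\le Y$ $\mathbb{P}$-a.s.\ by applying \eqref{item2_essinf} to each in turn. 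So $Y=Y^\ast$ $\mathbb{P}$-a.s., which is also $\mathbb{P}_t$-a.s.\ uniqueness by the lemma.

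The only delicate point is the quantifier order. Property \eqref{item1_essinf} is required for each $a$ separately, with an $a$-dependent exceptional $\mathbb{P}$-null set. The lemma is applied event by event, so no uncountable union of null sets ever arises. I will also state explicitly that $\pm\infty$ values are allowed, or note that the relevant infima are finite in the intended applications, so that $Y\in L^0$ is meaningful. In \cref{th:duality_smooth_semimartingale} the bounds on $l,g$ from \cref{hp:regularity_coeff} guarantee this finiteness.
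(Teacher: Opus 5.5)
Your proof is correct and follows the same route as the paper. In both, \cref{lemma:cond_distr_nullsets} (together with $\mathbb{P}_t(A^c)=1-\mathbb{P}_t(A)$) turns the $\mathbb{P}_t$-conditions into $\mathbb{P}$-a.s.\ statements, and one then checks that the classical $\mathbb{P}$-essential infimum satisfies both defining properties; your explicit uniqueness step (any $Y$ with both properties equals $\essinf_{a\in A}X(\cdot,a)$ a.s.) and your remark on possibly infinite values are small additions that the paper leaves implicit.
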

\begin{proof}
    We show the claim by verifying that $Y \coloneqq \essinf_{a \in A}X(\cdot, a)$ (with the $\essinf$ taken w.r.t. $\mathbb{P}$)  satisfies \eqref{item1_essinf} and \eqref{item2_essinf}. First note that by definition of the essential infimum w.r.t. $\mathbb{P}$ it holds $\mathbb{P}(Y \leq X(a)) =1$ and thus by  \cref{lemma:cond_distr_nullsets}, \eqref{item1_essinf} follows. Now suppose we have another $\tilde{Y}\in L^{0}(\Omega)$ such that \eqref{item1_essinf} is satisfied. Then  also by \cref{lemma:cond_distr_nullsets} it holds $\mathbb{P}(\tilde{Y} \leq X(a))=1$ for any $a \in A$ and thus again by definition of the essential infimum it holds $\mathbb{P}(Y \geq \tilde{Y})=1$ and so again by \cref{lemma:cond_distr_nullsets}, \eqref{item2_essinf} follows. 
\end{proof} 
\subsection{Predictability, optionality and non-anticipative functionals on path space}
\begin{definition}[\cite{stricker_calcul_1978}]
Consider a filtered probability space $(\Omega, \mathfrak{F}, (\mathfrak{F}_{t})_{t \in [0,T]}, \mathbb{P})$ and Polish spaces $A, V$ with Borel-algebras $\mathfrak{B}_{A}, \mathfrak{B}_{V}$. A parametrized, $V$-valued, measurable stochastic process $X: [0,T]\times \Omega \times A\to V$ is said to be $\mathfrak{B}_{A}$\textit{- predictable} if $X$ is $\mathfrak{P}\otimes \mathfrak{B}_{A}/ \mathfrak{B}_{V}$-measurable, where $\mathfrak{P}$ denotes the predictable $\sigma$-algebra on $[0,T]\times \Omega$ w.r.t. $(\mathfrak{F}_{t})$.
\end{definition}
In the following let $(\Omega, \mathfrak{F}, (\mathfrak{F}_{t})_{t \in [0,T]}, \mathbb{P})$ the Wiener space with augmented Brownian filtration and $V$ some Polish space. 
\begin{lemma}[Lemma 5.5 in \cite{flz26}]
\label{appendix:lemma:optional_nonantic}
It holds
\begin{enumerate}
\item $(\mathfrak{F}_{t})$-predictable processes are indistinguishable from $(\mathfrak{F}_{t})$-optional ones. 
\item Let $X: [0,T]\times \Omega \to V$ measurable. Then $X$ is $(\mathfrak{F}_{t})$-optional w.r.t.  iff  $X_{t}(\omega) = X_{t}(\omega_{\cdot \wedge t})$ up to indistinguishability.
\end{enumerate}
\end{lemma}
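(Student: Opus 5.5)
The plan is to prove both items by reducing to real-valued processes and to the raw (non-augmented) canonical filtration, and then to invoke the classical structure theory of the optional and predictable $\sigma$-algebras on Wiener space (Dellacherie--Meyer, Ch.~IV). Since $V$ is Polish, we fix a Borel isomorphism $\iota\colon V\to B\subset[0,1]$ onto a Borel subset. A process $X$ is $\mathfrak{P}$- resp.\ optional-measurable iff $\iota\circ X$ is. Indistinguishability and the identity $X_t(\omega)=X_t(\omega_{\cdot\wedge t})$ are also preserved under $\iota$ and $\iota^{-1}$. So it suffices to treat $V=\mathbb{R}$, and indeed bounded processes.

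For item (1) we use that, on the Wiener space with the augmented Brownian filtration, every martingale has a continuous version by the martingale representation theorem. Hence the filtration is quasi-left-continuous and admits no totally inaccessible stopping times, so every stopping time $\tau$ is predictable. The optional $\sigma$-algebra is generated by the stochastic intervals $[\![\tau,T]\!]$ with $\tau$ a stopping time. For predictable $\tau$ these intervals are predictable sets, so $\mathfrak{O}=\mathfrak{P}$, and a monotone class argument extends this from indicators to all bounded optional processes. Taken literally this gives equality of the $\sigma$-algebras, which in particular yields ``indistinguishable from''. Up to the usual evanescent-set caveat, this is the standard statement $\mathfrak{O}=\mathfrak{P}$ for Brownian filtrations.

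For item (2) we let $\mathfrak{F}^0_t=\sigma(\omega_s:s\le t)$ be the raw canonical filtration and $\mathfrak{O}^0$ its optional $\sigma$-algebra. We consider the stopping map $\kappa\colon[0,T]\times\Omega\to[0,T]\times\Omega$, $(t,\omega)\mapsto(t,\omega_{\cdot\wedge t})$. It is continuous and adapted, with $\mathfrak{F}^0_t=\sigma(\omega_{\cdot\wedge t})$, so it is $\mathfrak{O}^0$-measurable.

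\emph{The ``if'' direction.} Suppose $X$ is jointly measurable and $X=X\circ\kappa$ up to indistinguishability. Then $X\circ\kappa$ is a measurable function of an $\mathfrak{O}^0$-measurable map, hence $\mathfrak{O}^0$-measurable and in particular $(\mathfrak{F}_t)$-optional. Optionality passes to $X$ because the filtration is augmented, so evanescent sets are optional.

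\emph{The ``only if'' direction.} First take $X$ to be $\mathfrak{O}^0$-optional. The class of bounded processes with $X=X\circ\kappa$ identically is a monotone vector space. It contains all continuous $\mathfrak{F}^0$-adapted processes, since by Doob's functional representation $Y_t=f_t(\omega_{\cdot\wedge t})$, so $Y_t(\omega_{\cdot\wedge t})=Y_t(\omega)$. These processes generate $\mathfrak{O}^0$, so the monotone class theorem gives the identity for all $\mathfrak{O}^0$-measurable $X$. For a general $(\mathfrak{F}_t)$-optional $X$, where $\mathfrak{F}_t$ is the augmentation, we use the result that such $X$ is indistinguishable from an $\mathfrak{O}^0$-optional process $X^0$ (Dellacherie--Meyer IV.59/IV.78 type). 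Then $X=X^0=X^0\circ\kappa$ up to evanescence.

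The main obstacle is this last step, namely transferring the null set correctly. We must also check that $X\circ\kappa$ is indistinguishable from $X^0\circ\kappa$. This holds because $\{X\ne X^0\}$ is evanescent, and its image under $\kappa$ is controlled since the law of $\omega_{\cdot\wedge t}$ is absolutely continuous with respect to $\mathbb{P}$ on $\mathfrak{F}^0_t$-sets. If this route is awkward, we would instead prove the optional-section form directly, comparing $X_\tau$ and $X_\tau\circ\kappa$ at stopping times, and conclude via the optional section theorem.
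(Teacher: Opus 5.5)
The paper gives no proof of this lemma; it only cites \cite{flz26}. So I am comparing your proposal with the statement itself.

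Your argument for item (1) is the standard one and is fine: on the augmented Brownian filtration every stopping time is predictable, so $\mathfrak{O}=\mathfrak{P}$.

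\textbf{The gap is the last step of item (2).} You claim that $X\circ\kappa$ is indistinguishable from $X^0\circ\kappa$. This is false, and the justification does not work. The image of $\mathbb{P}$ under $\omega\mapsto\omega_{\cdot\wedge t}$ agrees with $\mathbb{P}$ on $\mathfrak{F}^0_t$. However, the exceptional set $\{X\neq X^0\}$ is only evanescent for the augmentation, so its sections are arbitrary $\mathbb{P}$-null sets, not $\mathfrak{F}^0_t$-sets. On $\mathfrak{B}(\Omega)$ that image law is singular to $\mathbb{P}$ for $t<T$, since it is carried by the null set of paths that are constant on $[t,T]$.

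Concretely, let $N:=\bigcup_{s\in\mathbb{Q}\cap[0,T)}\{\omega:\ \omega_r=\omega_s \text{ for all } r\in[s,T]\}$, which is a Borel $\mathbb{P}$-null set, and put $X:=\mathbf{1}_{[0,T]\times N}$. Since $N\in\mathfrak{F}_0$, $X$ is predictable and indistinguishable from $X^0\equiv 0$. Yet $X_t(\omega_{\cdot\wedge t})=1$ for every $t<T$ and every $\omega$.

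So the ``only if'' direction, read literally as ``$X$ and $X\circ\kappa$ are indistinguishable'', is false. Even $X_t=X_t\circ\kappa$ a.s.\ for a fixed $t$ fails. What is true is that $X$ is indistinguishable from a $\mathfrak{B}([0,T])\otimes\mathfrak{B}(\Omega)$-measurable $X^0$ with $X^0_t(\omega)=X^0_t(\omega_{\cdot\wedge t})$ for all $(t,\omega)$. This is also all the paper uses, in \cref{lemma: deterministic_version} and for $\pi^{\text{det}}$ in the proof of \cref{thm:randomization_value_function}. You had exactly this one line before the failing step, so stop there and state (2) in that form.

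For the same reason, in the ``if'' direction you need $X$ to be Borel in $\omega$, not just measurable for the completion. Otherwise $X\circ\kappa$ need not even be measurable, because $\kappa$ maps into a null set.

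A smaller point concerns the passage to the raw filtration. The Dellacherie--Meyer augmentation result gives an $(\mathfrak{F}_t)$-optional process only an $(\mathfrak{F}^0_{t+})$-optional version, and for such versions $Y=Y\circ\kappa$ can fail because of germ events at time $t$. Also, continuous $\mathfrak{F}^0$-adapted processes generate $\mathfrak{P}^0$; that they also generate $\mathfrak{O}^0$ is a separate theorem on path space. Both issues disappear if you route through item (1):
\begin{itemize}
\item An optional $X$ is predictable.
\item Each generator $(s,u]\times F$ with $F\in\mathfrak{F}_s=\mathfrak{F}^0_s\vee\mathcal{N}$ differs by an evanescent set from some $(s,u]\times F^0$ with $F^0\in\mathfrak{F}^0_s$, and $\mathfrak{F}_0$ is trivial modulo null sets.
\item Hence every $\mathfrak{P}$-measurable process is indistinguishable from a $\mathfrak{P}^0$-measurable one.
\item Your monotone class argument with Doob's functional representation then applies directly.
\end{itemize}
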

\subsection{Products of probability spaces}
\label{appendix:subsection:product_prob_spaces}
    Let $(\Omega' , \mathfrak{F}', \mathbb{P}')$ and $(\Omega'' , \mathfrak{F}'', \mathbb{P}''), $ be two probability spaces. Then we define the product of probability spaces by 
    \begin{equation*}
        (\Omega', \mathfrak{F}', \mathbb{P}')\otimes (\Omega'', \mathfrak{F}'', \mathbb{P}'')\coloneqq (\Omega' \times \Omega '', \mathfrak{F}' \otimes \mathfrak{F}'', 
        \mathbb{P}' \otimes \mathbb{P}''),
    \end{equation*}
    where $\mathfrak{F}' \otimes \mathfrak{F}''$ is the product-$\sigma$-algebra and $\mathbb{P}'\otimes \mathbb{P}''$ is the product-measure. 
In \cref{section_rough_HJB} we work on the product space defined like this: 
For brevity denote by $C^{d}\coloneqq C([0,T]; \mathbb{R}^{d})$. We consider the canonical path-space $(\Omega', \mathfrak{F}', \mathbb{P}')\coloneqq (C^{m_{B}}, \mathfrak{B}(C^{m_{B}}), \mathbb{P}')$, where $\mathbb{P}'$ is the Wiener measure w.r.t. the canonical process $B: [0,T]\times \Omega' \to \mathbb{R}^{m_{B}}; (t, \omega') \mapsto \omega'_{t}$ i.e. such that $B$ is a $m_{B}$-dim. Brownian Motion under $\mathbb{P}^{1}$. Further $(\Omega'', \mathfrak{F}'', \mathbb{P}'')\coloneqq (C^{m_{W}}, \mathfrak{B}(C^{m_{W}}), \mathbb{P}^{2})$ where $\mathbb{P}^{2}$ is the Wiener measure w.r.t. the canonical process $W$ on $\Omega''$. 
\section{A compendium on RSDEs and good rough paths}\label{appendix_section_rsde_and_control}
\subsection{Rough stochastic analysis and differential equations}
\label{appendix_section_rsde_control}
In the following we give a minimal exposition to \textit{Rough stochastic analysis} and \textit{rough stochastic differential equations}(RSDEs) based on \cite{fhl21} for the convenience of the reader. 
\newline
Throughout the following let $2 \leq q \leq r \leq \infty$, $q < \infty$, $\alpha \in (\nicefrac{1}{3}, \nicefrac{1}{2}]$ and $\gamma \in (1, \infty)$ be fixed. Further let $V, W$ be finite-dim. Banach spaces and $\mathbf{X}\in \mathscr{C}^{\alpha}([0,T]; V)$ fixed.\\
For any probability space $(\Omega, \mathfrak{F}, \mathbb{P})$ with sub-$\sigma$-algebra $\mathfrak{G}\subset \mathfrak{F}$ we define
\begin{equation*}
    \Vert \Vert \cdot |\mathfrak{G} \Vert_{p} \Vert_{q}\coloneqq \left \Vert \mathbb{E}[|\cdot |^{p}|\mathfrak{G}]^{\frac{1}{p}} \right \Vert_{q}. 
\end{equation*}
Given a filtered probability space $(\Omega, \mathfrak{F}, (\mathfrak{F}_{t})_{t\in [0,T]}, \mathbb{P})$, for any strongly $\mathfrak{B}(\Delta_{T})\otimes \mathfrak{F}/\mathfrak{B}(V)$-measurable $A: \Delta_{T} \times \Omega \to V$ we say $A \in C_{2}^{\kappa}L_{q, r}([0,T]; V)$ for $\kappa >0$ if 
\begin{equation*}
    \Vert A \Vert_{\kappa; q, p}\coloneqq \sup_{(s,t) \in \Delta_{T}} \frac{\Vert \Vert A_{s,t} |\mathfrak{F}_{s}\Vert_{p}\Vert_{q}}{|t-s|^{\kappa}}< \infty.
\end{equation*}
\begin{definition}[\cite{fhl21} Definition 4.1]
\label{def:random_bounded_Lipschitz}
Let $g: [0,T]\times \Omega \to C_{b}(\mathbb{R}^{d}; \mathbb{R}^{m})$ strongly $\mathfrak{B}([0,T])\otimes \mathfrak{F}/ \mathfrak{C}_{b}(\mathbb{R}^{d}; \mathbb{R}^{m})$-measurable. We say that $g$ is \textit{random bounded Lipschitz} if both 
\begin{equation*}
    \begin{aligned}
        \sup_{t \in [0,T]} \esssup_{\omega \in \Omega} \sup_{x \in \mathbb{R}^{d}} |g_{t}(\omega, x)|< \infty, \quad \sup_{t \in [0,T]} \esssup_{\omega \in \Omega} \sup_{x,y \in \mathbb{R}^{d}} \frac{|g_{t}(\omega, x)-g_{t}(\omega, y)|}{|x-y|}< \infty.
    \end{aligned}
\end{equation*}
    
\end{definition}
	\begin{definition}[\cite{fhl21} Definition 3.1]
	\label{def:SCRP}
		We say that $(Z,Z')$ is a stochastic $X$-controlled rough path of $(q, r)$-integrability and $\alpha$-H\"older regularity with values in $W$ with respect to $(\mathfrak{F}_{t})$
		if the following are satisfied
		\begin{enumerate}
		 	\item $Z\colon \Omega\times  [0,T]\to W$ and $Z'\colon[0,T]\times \Omega\to \mathcal L(V;W)$
		 	are $(\mathfrak{F}_{t})$-progressively measurable;
            \item 
			$\delta Z$ belongs to $C^{\alpha}_2 L_{q,r}( [0,T],W;\mathcal{L}(V;W))$, $\delta Z'$ belongs to $C^{\alpha} L_{q,r}([0,T]; \mathcal{L}(V; W))$
			\\and \(\sup_{t\in [0,T]}\|Z'_t\|_{r}<\infty\)
		 	\item Defining
		 	$R^Z_{s,t}
		 	\coloneqq \delta Z_{s,t}-Z'_s \delta X_{s,t}
		 	,\quad\text{for}\enskip  (s,t)\in \Delta_{T}$
		 	we have that $\E_{\bullet} R^Z \coloneqq (\mathbb{E}_{s}[R^{Z}_{s,t}])_{(s,t)\in \Delta_{T}}$ belongs to $C^{2\alpha}_2L_{r}([0,T];W) $;
            
		\end{enumerate}
		The class of such processes is denoted by $\mathbf{D}_X^{2\alpha}L_{q,r}([0,T
        ];W)$, or simply
	 $\mathbf{D}_X^{2\alpha}L_{q,r}$ whenever clear from the context.
	\end{definition}
	\begin{definition}[\cite{fhl21} Definition 3.7]\label{def:scvec}
		We call a tuple $(f,f'): [0,T]  \to C^\gamma_{b}(W, \bar W) \times C^{\gamma-1}_{b} (W,\mathcal L(V,\bar W))$ a {\em controlled vector field on \( W \)}
		of $\alpha$-regularity with respect to $X$
		if the following conditions are satisfied.
		\begin{enumerate}

			\item
			Letting $\bk{Z}_{\kappa}:=\sup_{(s,t)\in \Delta_{T}}\frac{|\sup_{x \in W} |Z_{s,t}(x)|}{|t-s|^{\kappa}}$, 
			the quantities
			$\bk{\delta f}_{\alpha}$, $\bk{\delta f'}_{\alpha}$, \( \bk{\delta Df}_{\alpha} \) are finite.%

			\item Defining  
            $R^{f}_{s,t}(x)\coloneqq f_{t}(x)-f_{s}(x)- f'_{s}(x) \delta X_{s,t}$ for any $(s, t) \in \Delta_{T}$ and $x \in W$ it holds $\bk{R^{f}}_{2\alpha}< \infty$
		\end{enumerate}
		The class of such vector fields is denoted by $\mathscr{D}^{2\alpha}_X\mathcal{C}^\gamma_b( [0,T];W; \bar{W})$, or simply $\mathscr{D}_X^{2\alpha}\mathcal{C}^\gamma_b$ whenever the domains $( [0,T];W; \bar{W})$ are clear from context. For two paths $X, \tilde{X}\in \mathcal{C}^{\alpha}([0,T];V)$ and $(f, f')\in \mathscr{D}^{2\alpha}_X\mathcal{C}^\gamma_b( [0,T];W; \bar{W}), (\tilde{f}, \tilde{f}') \in \mathscr{D}^{2\alpha}_{\tilde{X}}\mathcal{C}^\gamma_b( [0,T];W; \bar{W})$ we define 
        \begin{equation*}
            [ f, f'; \tilde{f}, \tilde{f}']_{X, \tilde{X}; 2\alpha}\coloneqq \llbracket \delta f- \delta \tilde{f} \rrbracket_{\alpha}+ \llbracket \delta f'- \delta \tilde{f}' \rrbracket_{\alpha}+\llbracket \delta Df- \delta D\tilde{f} \rrbracket_{\alpha}+ \llbracket \delta R^f- \delta R^{\tilde{f}}\rrbracket_{2\alpha}
        \end{equation*}
	\end{definition}
    We are interested in giving well-posed meaning to the following \textit{rough stochastic differential equation} (RSDE)
    \begin{equation}\label{eqn.srde}
		dY_t(\omega)=b_t(\omega,Y_t(\omega))dt+\sigma_t(\omega,Y_t(\omega))dB_t(\omega)+(f_t,f'_t)(Y_t(\omega)) d\mathbf{X}_t,\quad t\in[0,T].
	\end{equation}
    \begin{definition}[\cite{fhl21} Definition 4.2]
		\label{def.soln}
        Let $\alpha \in (\nicefrac{1}{3}, \nicefrac{1}{2}]$, $2\leq q \leq r \leq \infty, q< \infty$.
		An $L_{q,r}$-integrable solution of \eqref{eqn.srde}  over $[0,T]$ is a continuous $(\mathfrak{F}_{t})$-adapted process $Y$ such that the following conditions are satisfied
		\begin{enumerate}
        \item $\int_0^T |b_r(Y_r)|dr$ and $\int_0^T |(\sigma \sigma^\top)_r(Y_r)|dr$ are finite a.s.;
			\item $(f(Y),Df(Y)f(Y)+f'(Y))$ belongs to $\mathbf{D}_X^{2\alpha}L_{q,r}([0,T];\mathcal{L}(V;W))$
			\item\label{def.Davie.expansion} $Y$ satisfies the following stochastic Davie-type expansion
			\begin{equation*} \|\|Y^{\sharp}_{s,t}|\mathfrak{F}_{s}\|_q\|_r=o(t-s)^{1/2}; \quad 
				\|\mathbb{E}_s(Y^{\sharp}_{s,t})\|_r=o(t-s)
			\end{equation*}
			for every $(s,t)\in \Delta_{T}$, where
			\begin{equation*}
            \begin{aligned}
				Y^{\sharp}_{s,t}
				\coloneqq&\delta Y_{s,t}-\int_s^tb_r(Y_r)dr-\int_s^t \sigma_r(Y_r)dB_r -f_s(Y_s)\delta X_{s,t}-\left(Df_s(Y_s)f_s(Y_s)+f'_s(Y_s)\right)\mathbb{X}_{s,t}.
        \end{aligned}
        \end{equation*}
		\end{enumerate}
		When the initial datum $Y_0=\xi$ is specified, we say that $Y$ is a solution starting from $\xi$.
	\end{definition}
    The following is a simplified case included in the main well-posedness theorem of such solutions \cite[Theorem 4.6]{fhl21}, which is sufficient for this work.
	\begin{theorem}
    \label{thm.fixpoint}
			 Let	$b,\sigma$ be random bounded Lipschitz functions,
		 assume that $(f,f')$ belongs to  $\mathscr{D}^{2\alpha}_{X}\mathcal{C}^\gamma_b$ while $(Df,Df')$ belongs to  $\mathscr{D}^{2\alpha}_{X}\mathcal{C}^{\gamma-1}_b$.
			Assume moreover that $\alpha\in (\nicefrac{1}{3}, \nicefrac{1}{2}]$ and $\gamma> \nicefrac{1}{\alpha}$. Then for every $\xi\in L^{0}(\mathfrak{F}_0;W)$ and $q \in [2, \infty)$, there exists a unique $L_{q,\infty}$-integrable  solution on $[0,T]$ to \eqref{eqn.srde} starting from $\xi$.
		\end{theorem}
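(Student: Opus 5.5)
This theorem is the simplified version of \cite[Theorem 4.6]{fhl21}, so the plan is to check its hypotheses and then follow the proof there: a contraction argument in a space of stochastic controlled rough paths over short intervals, followed by patching. Fix $\xi\in L^0(\mathfrak{F}_0;W)$, $q\in[2,\infty)$, $r=\infty$. As a first reduction, we assume $\xi\in L^\infty$. For general $\xi$ we then localize on the events $\{|\xi|\le k\}\in\mathfrak{F}_0$; this is consistent because the equation is pathwise in $\mathbf{X}$ and the Davie expansion conditions are conditional on $\mathfrak{F}_s$, so they commute with multiplication by $\mathfrak{F}_0$-indicators.

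\emph{Step 1 (solution map).} For $(Y,Y')$ in $\mathbf{D}^{2\alpha}_XL_{q,\infty}([s_0,s_0+h];W)$ with $Y'=f(Y)$ and $Y_{s_0}$ given, we compose with the controlled vector field. This gives $(f(Y),Df(Y)f(Y)+f'(Y))\in\mathbf{D}^{2\alpha}_XL_{q,\infty}$, using $\gamma>1/\alpha>2$ and the bounds on $\bk{\delta f}_\alpha$, $\bk{\delta f'}_\alpha$, $\bk{R^f}_{2\alpha}$, $\bk{\delta Df}_\alpha$. We then define
\[
\Phi(Y)_t=Y_{s_0}+\int_{s_0}^tb_r(Y_r)\,dr+\int_{s_0}^t\sigma_r(Y_r)\,dB_r+\int_{s_0}^t f_r(Y_r)\,d\mathbf{X}_r .
\]
The last term is the rough stochastic integral. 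It is built via the stochastic sewing lemma from the germ $f_s(Y_s)\delta X_{s,t}+(Df_s(Y_s)f_s(Y_s)+f'_s(Y_s))\mathbb{X}_{s,t}$, which needs only the conditional bounds $\|\mathbb{E}_s R\|_\infty\lesssim|t-s|^{2\alpha}$ and $\|\|\delta Z|\mathfrak{F}_s\|_q\|_\infty\lesssim|t-s|^\alpha$. The drift and It\^o terms are handled by boundedness of $b,\sigma$ and BDG. Once $\Phi$ has a fixed point, the sewing remainder estimates give the Davie expansion in \cref{def.soln}.

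\emph{Step 2 (invariance and contraction).} We show that for $h$ small, depending only on the bounds on $b,\sigma,f,f'$ and $\|\mathbf{X}\|_\alpha$, the map $\Phi$ leaves a ball invariant and is a contraction in the $\mathbf{D}^{2\alpha}_XL_{q,\infty}$ metric. The key point is that differences of compositions satisfy, e.g., $f(Y)-f(\bar Y)=\int_0^1Df(\theta Y+(1-\theta)\bar Y)\,d\theta\,(Y-\bar Y)$. This requires one more derivative of $f$ than the a priori bound uses, and that extra derivative is supplied by the assumption $(Df,Df')\in\mathscr{D}^{2\alpha}_X\mathcal{C}^{\gamma-1}_b$. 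Lipschitz continuity of $b,\sigma$ handles the classical parts, with a factor $h^{1/2}$ or $h^{1-\alpha}$ from BDG and H\"older.

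\emph{Step 3 (global solution and uniqueness).} Since $h$ does not depend on the initial condition (the $r=\infty$ bounds are uniform), we iterate over $[kh,(k+1)h]$ to get a global solution. Uniqueness follows because any $L_{q,\infty}$ solution is a fixed point of $\Phi$ on each subinterval: its Davie expansion identifies the rough term with the sewn integral, by the uniqueness part of stochastic sewing.

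\emph{Main obstacle.} The hardest step is the contraction estimate for the remainder $\mathbb{E}_sR$ of the composed process $f(Y)$. The $q$-moment remainder of $\delta Y$ is only controlled at order $\alpha$, not $2\alpha$, so the $2\alpha$-order must come from conditional expectations alone. Doing this mixed $(q,\infty)$ bookkeeping carefully is the technical core. Here I would follow the estimates of \cite[Sections 3--4]{fhl21} directly rather than reprove them.
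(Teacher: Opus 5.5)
Your proposal takes essentially the paper's route: the paper gives no independent argument and simply quotes the statement as a special case of \cite[Theorem 4.6]{fhl21}, and your contraction-on-short-intervals sketch outlines that reference. One fix if you write the sketch out: the set $\{Y:(Y,f(Y))\in\mathbf{D}^{2\alpha}_XL_{q,\infty}\}$ is in general not invariant under $\Phi$, since the Gubinelli derivative of $\Phi(Y)$ is $f(Y)$ and not $f(\Phi(Y))$; so the fixed point must be run on pairs, $(Y,Y')\mapsto(\Phi(Y,Y'),f(Y))$, with integrand derivative $Df(Y)Y'+f'(Y)$ and $Y'_{s_0}=f(Y_{s_0})$.
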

\subsection{Good rough paths}
\label{subsection:good_rough_paths}
We adapt the notion of 'good rough paths' from \cite{coutin_2007} to our setting.
\begin{definition}
\label{def:good_rough_path}
Let $\mathbf{X}=(X, \mathbb{X})\in \mathscr{C}^{0, \alpha}_{g}([0,T]; V)$ and $(X^{n})_{n \in \mathbb{N}}$ a sequence of continuous $V$-valued paths of bounded variation converging to $X$ in $\Vert \cdot \Vert_{\alpha}$. Then $(X^{n})_{n \in \mathbb{N}}$ is a \textit{good rough path} sequence (associated to $\mathbf{X}$) if for 
\begin{equation*}
\begin{aligned}
    (\mathbf{X}; X^{n})&\coloneqq \left(\left(
    \begin{array}{c}
    X\\
    X^{n}
    \end{array} \right),\left( \begin{array}{cc}
    \mathbb{X} & \int \delta X \otimes dX^{n}\\
    \int \delta X^{n} \otimes dX & \int \delta X^{n} \otimes dX^{n}
    \end{array}\right)\right)\eqqcolon \left(\left(
    \begin{array}{c}
    X\\
    X^{n}
    \end{array} \right),\left( \begin{array}{cc}
    \mathbb{X} & \Pi(X; X^{n})\\
    \Pi(X^{n}; X) & \Pi(X^{n}; X^{n})
    \end{array}\right)\right);\\ 
    (\mathbf{X}; \mathbf{X})&\coloneqq \left( \left( \begin{array}{c} X\\ X\end{array} \right), \left(\begin{array}{cc} \mathbb{X} & \mathbb{X} \\ \mathbb{X} & \mathbb{X} \end{array} \right) \right)
\end{aligned}
\end{equation*}
it holds $
    \rho_{\alpha}((\mathbf{X}; X^{n}), (\mathbf{X}; \mathbf{X}))\stackrel{n \to \infty}{\longrightarrow}0$.
\end{definition}
Note, that not all geometric rough paths are 'good' (in particular no 'pure area rough path' can be good; see \cite[Example 3]{coutin_2007}). However importantly for our application piece-wise linear approximations of Brownian Motion are good.
\begin{example}[Theorem 9 in \cite{coutin_2007}]
\label{appendix:ex:good_rp_Brownian}
Let $W$ be a $m_{W}$-dim. Brownian Motion. Then there is $\mathcal{P}^{n}=(t_{i}^{n})_{i=0}^{n}$ a sequence of subdivisions (partitions where $\mathcal{P}^{n+1}\subset \mathcal{P}^{n}$ for any $n \in \mathbb{N}$) of $[0,T]$ of locally vanishing mesh size such that the linear interpolation
\begin{equation*}
    W^{n}_{t}\coloneqq W_{t_{i}^{n}}+ \frac{t-t_{i}^{n}}{t_{i+1}^{n}-t_{i}^{n}} \delta W_{t_{i}, t_{t_{i+1}}}; \quad t\in [t_{i}^{n}, t_{i+1}^{n})
\end{equation*}
is a.s. a good rough path sequence w.r.t. $\mathbf{W}^{\text{Strat}}$.
\end{example}
The following is an extension of \cite[Corollary 3.5]{fhl21}.
\begin{lemma}
\label{lemma:good_rp_approx}
    Let $\alpha \in (\nicefrac{1}{3}, \nicefrac{1}{2}]$, $\nicefrac{1}{\alpha}< q \leq r \leq \infty$ and $q< \infty$. Let $\mathbf{X}\in \mathscr{C}^{0, 
    \alpha}_{g}([0,T]; V)$, $(Y, Y'), (\bar{Y}, \bar{Y}')\in \mathbf{D}^{2\alpha}_{X}L_{q,r}([0,T]; \mathcal{L}(V; W))$ and $(X^{n})_{n \in \mathbb{N}}\subset \mathcal{C}^{1}([0,T]; V)$. Then for the integrals\footnote{By $\Vert \cdot \Vert_{q}\leq \Vert \cdot \Vert_{q, r}$ and $\nicefrac{1}{\alpha}< q$ one infers by the Kolmogorov continuity criterion that $\bar{Y}$ is continuous almost surely. Thus $\bar{Z}^{n}$ is the classical Riemann-Stieltjes integral. Recall that 
   \cref{def:SCRP} makes a priori no claims on the regularity of the sample paths itself.} and remainders
    \begin{equation*}
        \begin{aligned}
            Z_{t}&\coloneqq \int_{0}^{T} (Y, Y')_{r} d\mathbf{X}_{r}; \quad \bar{Z}^{n}_{t} \coloneqq \int_{0}^{T} \bar{Y}_{r} dX^{n}_{r}; \\
            R^{Z}_{s,t}&\coloneqq \delta Z_{s,t}- Y_{s} \delta X_{s,t} - Y'_{s} \mathbb{X}_{s,t}; \quad R^{Z^{n}}_{s,t} \coloneqq \delta Z^{n}_{s,t} - \bar{Y}_{s} \delta X^{n}_{s,t} - \bar{Y}'_{s} \Pi(X; X^{n})_{s,t}\\
            R^{Y}_{s,t}&\coloneqq \delta Y_{s,t}- Y'_{s}\delta X_{s,t}; \quad R^{\bar{Y}}_{s,t}\coloneqq \delta \bar{Y}_{s,t}- \bar{Y}'_{s} \delta X_{s,t}
        \end{aligned}
    \end{equation*}
    it holds
    \begin{equation*}
    \begin{aligned}
    \left \Vert Z- \bar{Z}^{n} \right \Vert_{\alpha; q, r}&+ \left \Vert \mathbb{E}_{\bullet}(R^{Z})-\mathbb{E}_{\bullet}(R^{\bar{Z}^{n}})\right \Vert_{3\alpha; r}+\left \Vert \left \Vert \sup_{t \in [0,T]}|Z_{t}-\bar{Z}^{n}_{t}|\bigg| \mathfrak{F}_{0}\right \Vert_{q} \right \Vert_{r}\\
    &\lesssim \rho_{\alpha}(\mathbf{X}; (X^{n}, \Pi(X; X^{n})))+ \Vert Y, Y'; \bar{Y}, \bar{Y}' \Vert_{X, X; 2\alpha;  q, r}, 
    \end{aligned}
    \end{equation*}
where 
\begin{equation*}
    \Vert Y, Y'; \bar{Y}, \bar{Y}' \Vert_{X, X; 2\alpha;  q, r}\coloneqq \sup_{t \in [0,T]} \Vert Y'_{t}- \bar{Y}'_{t}\Vert_{r}+ \Vert \delta Y- \delta Y'\Vert_{\alpha; q, r}+\Vert \delta Y'- \delta \bar{Y}'\Vert_{\alpha; q, r}+ \Vert \mathbb{E}_{\bullet}[R^{Y}]- \mathbb{E}_{\bullet}[R^{Y'}]\Vert_{2\alpha; r}
\end{equation*} and the implicit constant only depends on $\Vert \bar{Y}, \bar{Y}' \Vert_{X; \alpha; q, r}$ and $\Vert \mathbf{X}\Vert_{\alpha}$.
\end{lemma}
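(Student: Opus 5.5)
We follow the strategy of \cite[Corollary 3.5]{fhl21}, which compares two rough stochastic integrals against the same rough path. The new feature is that $\bar Z^{n}$ is a Riemann--Stieltjes integral against the smooth path $X^{n}$. We therefore view it as a rough stochastic integral against the ``mixed'' rough path $(X^{n},\Pi(X;X^{n}))$. This object is well defined, since $\bar Y$ is $X$-controlled and $X^{n}\in\mathcal C^{1}$, so the cross integral $\Pi(X;X^{n})$ exists classically. The estimate then splits into two parts: a change of the driving (mixed) rough path, controlled by $\rho_\alpha(\mathbf X;(X^{n},\Pi(X;X^{n})))$, and a change of integrand, controlled by $\Vert Y,Y';\bar Y,\bar Y'\Vert_{X,X;2\alpha;q,r}$.

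\textbf{Step 1 (local germs).} Write the difference of germs
\begin{equation*}
\Xi_{s,t}\coloneqq Y_s\delta X_{s,t}+Y'_s\mathbb X_{s,t}-\bar Y_s\delta X^{n}_{s,t}-\bar Y'_s\Pi(X;X^{n})_{s,t}.
\end{equation*}
We first identify $\bar Z^{n}$ as the stochastic sewing limit of $\bar A_{s,t}\coloneqq\bar Y_s\delta X^{n}_{s,t}+\bar Y'_s\Pi(X;X^{n})_{s,t}$. To see this, expand $\bar Y_r=\bar Y_s+\bar Y'_s\delta X_{s,r}+R^{\bar Y}_{s,r}$ inside $\int_s^t\bar Y_r\,dX^{n}_r$. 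The leftover term $\int_s^t R^{\bar Y}_{s,r}dX^{n}_r$ is then small, using only the $\mathcal C^1$ regularity of $X^n$ for this identification. Consequently $Z-\bar Z^{n}$ is the stochastic sewing of $\Xi$.

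\textbf{Step 2 (sewing bounds).} We apply the stochastic sewing lemma of Lê (as in \cite{fhl21}) to $\Xi$. The required inputs are two bounds. The first is $\Vert\Vert\Xi_{s,t}|\mathfrak F_s\Vert_q\Vert_r\lesssim|t-s|^\alpha$, with constant proportional to the right-hand side. The second concerns $\delta\Xi_{s,u,t}$. Chen's relation for $\mathbf X$, together with the analogous identity $\delta\Pi(X;X^{n})_{s,u,t}=\delta X_{s,u}\otimes\delta X^{n}_{u,t}$, gives
\begin{equation*}
\delta\Xi_{s,u,t}=-(R^{Y}_{s,u}\delta X_{u,t}-R^{\bar Y}_{s,u}\delta X^{n}_{u,t})-(\delta Y'_{s,u}\mathbb X_{u,t}-\delta\bar Y'_{s,u}\Pi(X;X^{n})_{u,t}).
\end{equation*}
Each bracket is handled by adding and subtracting a mixed term, for instance
\begin{equation*}
R^{Y}\delta X-R^{\bar Y}\delta X^{n}=(R^{Y}-R^{\bar Y})\delta X+R^{\bar Y}(\delta X-\delta X^{n}).
\end{equation*}
This yields $\Vert\mathbb E_s\delta\Xi_{s,u,t}\Vert_r\lesssim|t-s|^{3\alpha}$ and $\Vert\Vert\delta\Xi_{s,u,t}|\mathfrak F_s\Vert_q\Vert_r\lesssim|t-s|^{2\alpha}$. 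In both bounds the constant is the right-hand side of the lemma times norms of $(\bar Y,\bar Y')$ and $\Vert\mathbf X\Vert_\alpha$. Since $3\alpha>1$ and $2\alpha>1/2$, sewing applies and gives the first two terms of the estimate. The supremum bound then follows from the $\alpha$-Hölder bound, since $q>1/\alpha$, via a Kolmogorov/Garsia--Rodemich--Rumsey type argument in the conditional mixed norm, exactly as in \cite{fhl21}.

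\textbf{Main obstacle.} The delicate point is the conditional expectation term $R^{\bar Y}_{s,u}(\delta X_{u,t}-\delta X^{n}_{u,t})$. Only $\mathbb E_s R^{\bar Y}$ is $2\alpha$-regular, while $R^{\bar Y}$ itself is merely $\alpha$-regular in $L_{q,r}$. Since $\delta X_{u,t}-\delta X^{n}_{u,t}$ is deterministic, we get $\mathbb E_s[R^{\bar Y}_{s,u}(\delta X-\delta X^{n})_{u,t}]=\mathbb E_s[R^{\bar Y}_{s,u}](\delta X-\delta X^{n})_{u,t}$, which has order $|t-s|^{3\alpha}\rho_\alpha$. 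The same deterministic factoring handles $\mathbb X-\Pi(X;X^{n})$. This is precisely why the mixed object $\Pi(X;X^{n})$, rather than $\Pi(X^{n};X^{n})$, must appear. Note also that the constants must not depend on $\Vert X^{n}\Vert_{\mathcal C^1}$, only on $\Vert\mathbf X\Vert_\alpha$ and the distance $\rho_\alpha$; these are uniformly bounded along the approximating sequence.
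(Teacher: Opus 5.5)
Your argument is the paper's own. It uses the same difference germ $\Xi^n_{s,t}=Y_s\delta X_{s,t}+Y'_s\mathbb{X}_{s,t}-\bar Y_s\delta X^n_{s,t}-\bar Y'_s\Pi(X;X^n)_{s,t}$ and the same identity $\delta\Pi(X;X^n)_{s,u,t}=\delta X_{s,u}\otimes\delta X^n_{u,t}$ to compute $\delta\Xi^n_{s,u,t}$. It then applies the same add-and-subtract splitting, with the deterministic factors $\delta(X-X^n)_{u,t}$ and $(\Pi(X;X^n)-\mathbb{X})_{u,t}$ pulled out of $\mathbb{E}_s$, followed by the mixed-moment stochastic sewing lemma of \cite{fhl21}. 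Your formula for $\delta\Xi_{s,u,t}$ is the correct one. The paper's display has the wrong sign on its last two terms and $\mathbb{X}_{s,u}$ instead of $\mathbb{X}_{u,t}$, which is harmless for the bounds. Your Step 1 identifies $\bar Z^n$ as the sewing of $\bar Y_s\delta X^n_{s,t}+\bar Y'_s\Pi(X;X^n)_{s,t}$, since $\int_s^t R^{\bar Y}_{s,r}\,dX^n_r=O(|t-s|^{1+\alpha})$; this makes explicit a point the paper takes for granted.

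One bound you claim is false as stated, and the paper's proof passes over the same point silently. You assert $\Vert\Vert\Xi_{s,t}|\mathfrak{F}_s\Vert_q\Vert_r\lesssim|t-s|^\alpha$ with constant proportional to the right-hand side of the lemma. But the term $(Y_s-\bar Y_s)\delta X_{s,t}$ of $\Xi_{s,t}$ has size $\Vert Y_s-\bar Y_s\Vert_r\,|\delta X_{s,t}|$. The quantity $\Vert Y,Y';\bar Y,\bar Y'\Vert_{X,X;2\alpha;q,r}$ only involves increments of $Y-\bar Y$ and values of $Y'-\bar Y'$, so it cannot control this term.

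For a concrete counterexample, take $\mathbf{X}$ the canonical lift of a non-constant smooth path, $X^n=X$, $Y\equiv c\neq 0$, $\bar Y\equiv 0$ and $Y'=\bar Y'=0$. The right-hand side vanishes, but $Z-\bar Z^n=c\,\delta X_{0,\cdot}\neq 0$.

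So the $\alpha$-Hölder and supremum parts of the estimate, which are exactly the parts needing a bound on $\Xi$ itself, hold only once a term such as $\sup_{s}\Vert Y_s-\bar Y_s\Vert_r$ is added to the right-hand side. The $3\alpha$ remainder estimate depends only on $\delta\Xi$ and is unaffected. This is a defect of the statement rather than of your method. Your write-up should nevertheless carry the extra term explicitly rather than assert proportionality to the stated right-hand side.
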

\begin{proof}
We define 
\begin{equation*}
\begin{aligned}
\Xi^{n}_{s,t} &\coloneqq Y_{s} \delta X_{s,t} + Y'_{s} \mathbb{X}_{s,t} - \bar{Y}_{s} \delta X^{n}_{s,t} - \bar{Y}'_{s} \Pi(X; X^{n}_{s,t})\\
&=(Y_{s}- \bar{Y}_{s}) \delta X_{s,t}+ (Y'_{s}- \bar{Y}_{s}')\mathbb{X}_{s,t}+ \bar{Y}_{s}(\delta X_{s,t}- \delta X^{n}_{s,t}) + \bar{Y}_{s}'(\mathbb{X}_{s,t}- \Pi(X; X^n)_{s,t})
\end{aligned}
\end{equation*}
    For any $(s,u,t) \in \Delta_{T}^{(3)}$ it holds $\delta \Pi (X; X^{n})_{s,u,t}= \delta X_{s,u}\otimes \delta X^{n}_{u,t}$
    and thus
    \begin{equation*}
    \begin{aligned}
        \delta \Xi^{n}_{s,u,t}&= - (\delta(Y-\bar{Y})_{s,u}- (Y'-\bar{Y}')_{s}\delta X_{s,u}) \delta X_{u,t}- \delta (Y'- \bar{Y}')_{s,u}\mathbb{X}_{s,u}\\
        &-(\delta \bar{Y}_{s,u}- \bar{Y}_{s}' \delta X_{s,u} ) (\delta X_{u,t}^{n}- \delta X_{u,t})- \delta \bar{Y}'_{s,u} (\Pi(X; X^{n})_{u,t}- \mathbb{X}_{u,t}).
    \end{aligned}
    \end{equation*}
    One easily checks that 
    \begin{eqnarray*}
        \frac{\left \Vert \mathbb{E}_{s} (\delta \Xi_{s,u,t}^{n}) \right \Vert_{r}}{|t-s|^{3\alpha}}&\leq& \Vert \mathbb{E}_{\bullet} (R^{Y})- \mathbb{E}_{\bullet}(R^{\bar{Y}}) \Vert_{2\alpha; r} \Vert X \Vert_{\alpha} + \Vert Y'- \bar{Y}' \Vert_{\alpha; q, r}\Vert \mathbb{X} \Vert_{2\alpha}\\
        &&+\Vert \mathbb{E}_{\bullet}(R^{\bar{Y}})\Vert_{2\alpha; r} \Vert X - X^{n}\Vert_{\alpha}+\Vert \bar{Y}'\Vert_{\alpha; q,r} \Vert \Pi(X; X^{n})-\mathbb{X}\Vert_{2\alpha}; \\
        \sup_{u \in [0,T]}\frac{\left \Vert \delta \Xi^{n}_{s,u,t} \big| \mathfrak{F}_{s} \right \Vert_{q,r}}{|t-s|^{2\alpha}}&\leq & \frac{ \left \Vert \left \Vert  \sup_{r \in [\frac{s+t}{2}, t]} |\delta \Xi^{n}_{s,\frac{s+t}{2}, r}| \mathfrak{F}_{s} \right \Vert_{q} \right \Vert_{r}}{|t-s|^{2\alpha}} \\
        &\leq &\left( \Vert Y- \bar{Y} \Vert_{\alpha; q,r} + \sup_{r \in [0,T]} \Vert Y'_{r} - \bar{Y}'_{r} \Vert_{r} \Vert X \Vert_{\alpha}\right) \Vert X \Vert_{\alpha} \\
        &&+\left( \Vert \bar{Y} \Vert_{\alpha; q,r}+ \sup_{r \in [0,T]} \Vert \bar{Y}_{r}'\Vert_{r} \Vert X\Vert_{\alpha}\right)\Vert X^{n}- X \Vert_{\alpha}\\
        &&+\sup_{r \in [0,T]} \Vert Y'_{r}- \bar{Y}'_{r} \Vert_{r} \Vert \mathbb{X} \Vert_{2\alpha}+ \sup_{r \in [0,T]} \Vert \bar{Y}'_{r} \Vert_{r} \Vert \Pi(X; X^{n}) - \mathbb{X} \Vert_{2\alpha}.
    \end{eqnarray*}
    Thus by applying the mixed-moment stochastic sewing lemma \cite[Theorem 2.9]{fhl21} to $\Xi^{n}$ the claim follows.
\end{proof}
\begin{remark}
    We note that for $(Y, Y')\equiv (\bar{Y}, \bar{Y}')$, \cref{lemma:good_rp_approx} asserts that rough stochastic integration for stochastic controlled rough paths is stable under good rough path approximation. This is an extension of \cite[Lemma 7]{coutin_2007} to (stochastic) controlled integrands and seems to be novel in the setting of controlled rough paths even for the deterministic case.
\end{remark}
Using \cref{lemma:good_rp_approx} we investigate the stability of RSDEs driven by good rough path sequences.
\begin{lemma}
\label{lemma:good_rp_RSDE}
    Let $\mathbf{X}\in \mathscr{C}^{0, \alpha}_{g}([0,T]; V)$, $\alpha \in (\nicefrac{1}{3}, \nicefrac{1}{2}], q \in (\nicefrac{1}{\alpha}, \infty)$ and $(X^{n})_{n \in \mathbb{N}}\subset \mathcal{C}^{1}
    ([0,T]; V)$. Further denote by $Y$ the $L_{q,\infty}$-solution to
    \begin{equation*}
     dY_{t} = b_{t}(Y_{t}) dt + \sigma_{t}(Y_{t}) dB_{t} + (f, f')_{t}(Y_{t}) d\mathbf{X}_{t}; \quad Y_{0}= \xi
    \end{equation*}
    and by $Y^{n}$ the solution to 
    \begin{equation*}
        dY^{n}_{t}= b_{t}(Y^{n}_{t})dt + \sigma_{t}(Y^{n}_{t}) dB_{t} + f_{t}(Y^{n}_{t}) dX^{n}_{t}; \quad Y^{n}_{0}= \xi 
    \end{equation*}
    for $\xi \in L^{0}(\mathfrak{F}_{0}, W)$, coefficients $b, \sigma$ random bounded Lipschitz, $(f, f')\in \mathscr{D}^{2\alpha}_{X}C_{b}^{\gamma}$ and $(Df, Df')\in \mathscr{D}^{2\alpha}_{X}C^{\gamma-1}_{X}$ for $\gamma > \nicefrac{1}{\alpha}$. Then 
    \begin{equation}
    \label{eq:good_approxi_RSDE}
    \Vert Y- Y^{n}\Vert_{\alpha; q, \infty}\lesssim \rho_{\alpha}(\mathbf{X}; (X^{n}, \Pi(X; X^{n}))),
    \end{equation}
    where the implicit coefficient only depends on $(f, f')$. Further if $(X^{n})_{n \in \mathbb{N}}$ is a good rough path sequence w.r.t. $\mathbf{X}$, then 
    \begin{equation*}
        \lim_{n \to \infty} \Vert Y - Y^{n} \Vert_{\alpha; q, \infty}=0
    \end{equation*}
\end{lemma}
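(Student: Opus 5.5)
The plan is to compare $Y$ and $Y^{n}$ by viewing $Y^{n}$ as an RSDE driven by the (non-rough) Riemann--Stieltjes integral against $X^{n}$. We then estimate the difference with the stochastic sewing machinery, with \cref{lemma:good_rp_approx} supplying the integral comparison. The second assertion is immediate from \eqref{eq:good_approxi_RSDE}. Indeed, if $(X^{n})$ is a good rough path sequence, then the component $\Pi(X;X^{n})$ of $(\mathbf{X};X^{n})$ converges to $\mathbb{X}$ in $2\alpha$-Hölder norm, and $X^{n}\to X$ in $\alpha$-Hölder norm. Hence $\rho_{\alpha}(\mathbf{X};(X^{n},\Pi(X;X^{n})))\le\rho_{\alpha}((\mathbf{X};X^{n}),(\mathbf{X};\mathbf{X}))\to0$. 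So the work is entirely in \eqref{eq:good_approxi_RSDE}.

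\textbf{Step 1: $Y^{n}$ as a stochastic controlled rough path with respect to $X$.} Set $Y^{n\prime}_{t}:=f_{t}(Y^{n}_{t})$. I would show that $(f(Y^{n}),Df(Y^{n})f(Y^{n})+f'(Y^{n}))\in\mathbf{D}^{2\alpha}_{X}L_{q,\infty}$, with norms bounded uniformly in $n$. This should follow by running the a priori estimates of \cite[Section 4]{fhl21} for the equation $dY^{n}=b\,dt+\sigma\,dB+f(Y^{n})dX^{n}$. That equation has the same form as the original RSDE, with the lift $\mathbf{X}$ replaced by the pair $(X^{n},\Pi(X;X^{n}))$. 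Viewed this way, the Davie expansion of $Y^{n}$ has the form $f_{s}(Y^{n}_{s})\delta X^{n}_{s,t}+(Df f+f')_{s}(Y^{n}_{s})\Pi(X;X^{n})_{s,t}$ plus a remainder of order $o(|t-s|)$. This identification holds because $f(Y^{n})$ is controlled by $X$, not by $X^{n}$, through $f'$ and $Y^{n}$.

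Since $\rho_{\alpha}(\mathbf{X};(X^{n},\Pi(X;X^{n})))$ may be assumed bounded (otherwise the estimate is trivial by the a priori bounds), the $\alpha$-norm of $X^{n}$ and the $2\alpha$-norm of $\Pi(X;X^{n})$ are uniformly bounded. The constants therefore depend only on $(f,f')$ and $\Vert\mathbf{X}\Vert_{\alpha}$.

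\textbf{Step 2: comparison of the rough terms.} Write
\[
\delta(Y-Y^{n})_{s,t}=\int_{s}^{t}(b(Y)-b(Y^{n}))dr+\int_{s}^{t}(\sigma(Y)-\sigma(Y^{n}))dB_{r}+\Big(\int f(Y)d\mathbf{X}-\int f(Y^{n})dX^{n}\Big)_{s,t}.
\]
For the last term, apply \cref{lemma:good_rp_approx} with $(Y,Y')\to(f(Y),(Dff+f')(Y))$ and $(\bar{Y},\bar{Y}')\to(f(Y^{n}),(Dff+f')(Y^{n}))$. This bounds it by $\rho_{\alpha}(\mathbf{X};(X^{n},\Pi(X;X^{n})))$ plus the controlled-path distance $\Vert f(Y),\dots;f(Y^{n}),\dots\Vert_{X,X;2\alpha;q,\infty}$. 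The standard composition estimate for controlled vector fields (\cite[Prop.~3.13-type]{fhl21}, using $\gamma>1/\alpha$ and the assumption $(Df,Df')\in\mathscr{D}^{2\alpha}_{X}C^{\gamma-1}_{b}$) bounds that distance by $\Vert Y-Y^{n}\Vert_{\alpha;q,\infty}$, $\sup_{t}\Vert Y_{t}-Y^{n}_{t}\Vert$, and $\Vert\mathbb{E}_{\bullet}R^{Y}-\mathbb{E}_{\bullet}R^{Y^{n}}\Vert_{2\alpha}$. The drift and Itô terms are Lipschitz in the difference by BDG.

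\textbf{Step 3: closing the estimate (main obstacle).} The difficulty is that Step 2 produces the unknown difference on the right-hand side with a constant that is not small. Following the uniqueness/stability proof of \cite[Theorem 4.9]{fhl21}, I would first work on a short interval $[s,s+h]$. There, after scaling, the Hölder-type norms of $\mathbf{X}$ restricted to the interval are small, so the difference terms can be absorbed into the left-hand side. I would then concatenate over finitely many intervals, whose number depends only on $\Vert\mathbf{X}\Vert_{\alpha}$ and the uniform bounds from Step 1, propagating the initial differences through the $\Vert\cdot|\mathfrak{F}_{s}\Vert_{q}$ conditional norms.

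The delicate point is that the time-localized version of \cref{lemma:good_rp_approx} must carry the small-interval factor $h^{\alpha}$ in front of the difference terms. That factor comes from the sewing constant on $[s,s+h]$, so I would check that the mixed-moment sewing lemma \cite[Theorem 2.9]{fhl21} yields it. Iterating over the intervals then gives $\Vert Y-Y^{n}\Vert_{\alpha;q,\infty}\lesssim\rho_{\alpha}(\mathbf{X};(X^{n},\Pi(X;X^{n})))$, which is \eqref{eq:good_approxi_RSDE}.
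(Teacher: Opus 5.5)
Your route is essentially the paper's: its proof reruns \cite[Theorem 4.9]{fhl21} with \cref{lemma:good_rp_approx} in place of \cite[Corollary 3.5]{fhl21}. It fails at Step~1, and the failure cannot be patched, because \eqref{eq:good_approxi_RSDE} is false as stated.

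The problem is that $f(Y^{n})$ is not $X$-controlled with Gubinelli derivative $(Df\,f+f')(Y^{n})$. Since $Y^{n}$ is driven by $X^{n}$, you have $\delta Y^{n}_{s,t}\approx f_{s}(Y^{n}_{s})\delta X^{n}_{s,t}$, and hence
\[
\delta f(Y^{n})_{s,t}-(Df\,f+f')_{s}(Y^{n}_{s})\,\delta X_{s,t}\approx (Df\,f)_{s}(Y^{n}_{s})\,\delta(X^{n}-X)_{s,t}.
\]
This is only $O(|t-s|^{\alpha})$, already for fixed $n$ whenever $X$ is genuinely $\alpha$-rough. So \cref{lemma:good_rp_approx} does not apply to $(\bar Y,\bar Y')=(f(Y^{n}),(Df\,f+f')(Y^{n}))$. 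The correct local expansion is
\[
\int_{s}^{t}f_{r}(Y^{n}_{r})\,dX^{n}_{r}\approx f_{s}(Y^{n}_{s})\,\delta X^{n}_{s,t}+f'_{s}(Y^{n}_{s})\,\Pi(X;X^{n})_{s,t}+(Df\,f)_{s}(Y^{n}_{s})\,\Pi(X^{n};X^{n})_{s,t}.
\]
Thus the $Df\,f$-term pairs with $\Pi(X^{n};X^{n})$, not with $\Pi(X;X^{n})$ as you assert. Moreover $(X^{n},\Pi(X;X^{n}))$ violates Chen's relation, so it cannot serve as a driver in the first place.

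This is not cosmetic. Take
\begin{itemize}
\item $b=\sigma=0$, $f'=0$, $\xi=0$, $V=W=\mathbb{R}^{2}$;
\item $f_{1}=\partial_{y_{1}}$, $f_{2}=y_{1}\partial_{y_{2}}$, cut off far away;
\item $\mathbf{X}=(0,0)$ and $X^{n}_{t}=n^{-1}(\cos(n^{2}t)-1,\sin(n^{2}t))$.
\end{itemize}
Then $\Pi(X;X^{n})\equiv0$ and $\Vert X^{n}\Vert_{\alpha}\lesssim n^{2\alpha-1}\to0$ for $\alpha<\nicefrac12$, so the right-hand side of \eqref{eq:good_approxi_RSDE} tends to $0$. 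Yet $Y\equiv0$, while $Y^{n}_{t}=(X^{n,1}_{t},\int_{0}^{t}X^{n,1}_{r}\,dX^{n,2}_{r})\to(0,\nicefrac{t}{2})$, which is the L\'evy-area correction along $[f_{1},f_{2}]$. The paper's one-line proof tacitly uses the same false membership, so it does not supply the missing step either.

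The second assertion is nevertheless true, and the way to get it is through the joint lift. Let $\mathbf{Z}^{n}:=(\mathbf{X};X^{n})$ on $V\oplus V$; the cross integrals are Young integrals since $X^{n}\in\mathcal{C}^{1}$. Let $F_{t}(y)(v_{1},v_{2}):=f_{t}(y)v_{2}$, with Gubinelli derivative given by $f'$ in the first slot and $0$ in the second. Then:
\begin{itemize}
\item $Y^{n}$ is the $L_{q,\infty}$-solution of the RSDE driven by $\mathbf{Z}^{n}$ with $(F,F')$, since for fixed $n$ the extra second-order terms are $o(|t-s|)$;
\item $Y$ is the solution driven by $(\mathbf{X};\mathbf{X})$ with the same $(F,F')$, since its second-order term is $(f'+Df\,f)\mathbb{X}$;
\item the remainders of $F$ with respect to $(X,X^{n})$ and to $(X,X)$ coincide, so the vector-field distance vanishes.
\end{itemize}
The stability estimate of \cite[Theorem 4.9]{fhl21} then gives $\Vert Y-Y^{n}\Vert_{\alpha;q,\infty}\lesssim\rho_{\alpha}((\mathbf{X};X^{n}),(\mathbf{X};\mathbf{X}))$, with constants uniform on bounded sets of rough-path norms. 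This, not \eqref{eq:good_approxi_RSDE}, is the correct estimate. Its right-hand side tends to $0$ exactly by the definition of a good rough path sequence, which proves the second assertion.
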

\begin{proof}
Existence and uniqueness of $Y^{n}$ is clear as the $dX^{n}$-term is the usual Riemann-Stieltjes integral. The proof of \eqref{eq:good_approxi_RSDE} is mutatis mutandis the proof of \cite[Theorem 4.9]{fhl21}, where at every instance we replace the usage of \cite[Corollary 3.5]{fhl21} by \cref{lemma:good_rp_approx}.
\end{proof}
\section{Smooth semimartingales and the It\^{o}-Kunita-Wentzell formula}\label{kunita}
We introduce the set of smooth semimartingale fields.
\begin{definition}[Definition 4.1 in \cite{qiu2018}]\label{def:smooth-semimartingales}
For $v \in \mathcal{S}^2\left(C_b\left(\mathbb{R}^d\right)\right) \cap \mathcal{L}^2\left(C^2_b\left(\mathbb{R}^d\right)\right)$, we say $v \in \mathcal{C}_{\mathfrak{F}}^2$ if there exists $\left(\mathfrak{d}_t v, \mathfrak{d}_\omega v\right) \in \mathcal{L}^2\left(C_b\left(\mathbb{R}^d\right)\right) \times \mathcal{L}^2\left(C^1_b\left(\mathbb{R}^d; \mathbb{R}^{1\times m_{W}}\right)\right)$ such that, a.s., 
$$
v_r( x)=v_T(x)-\int_r^T \mathfrak{d}_s v_s(x) d s-\int_r^T \mathfrak{d}_\omega v_s( x) d W_s \quad \forall(r, x) \in[0, T] \times \mathbb{R}^d
$$
\end{definition}

The Doob-Meyer decomposition theorem implies the uniqueness of the pair ($\mathfrak{d}_t v, \mathfrak{d}_\omega v$) and thus makes sense of the linear operators $\mathfrak{d}_t, \mathfrak{d}_{\omega}$.
\begin{lemma}[It\^{o}-Kunita-Wentzell formula \cite{Kunita1997}]\label{lemma:It\^{o}-kunita}
 Let $v \in \mathcal{C}_{\mathfrak{F}}^2$ and let $X_s^{t,x,\pi}$ be a solution of \eqref{eq:controlled_doubly_sde}, for  $x \in \mathbb{R}^d$, $\pi \in \mathcal{U}$. Then,  a.s., for all $0 \leq t \leq \tau \leq T$, we have
\begin{small}
\begin{align*}
v_\tau(X_\tau^{t,x,\pi})&=v_t(x)+\int_t^\tau \mathfrak{d}_{s}v_{s}(X_s^{t,x,\pi})+ D v_s(X_s^{t,x,\pi})b_s(X_s^{t,x,\pi},\pi_s) + \operatorname{tr} \left( a_s(X_s^{t,x,\pi},\pi_s) D^2 v_s(X_s^{t,x,\pi}) \right)\\
&\quad + \gamma_s(X_s^{t,x,\pi},\pi_s) \cdot D(\mathfrak{d}_{\omega}v)_{s}(X_s^{t,x,\pi},\pi_s) ds+\int_t^\tau Dv_s(X_s^{t,x,\pi}) \sigma_s(X_s^{t,x,\pi},\pi_s) dB_s\\
&\quad+ \int_t^\tau\left[ \mathfrak{d}_{\omega}v_{s
}(X_s^{t,x,\pi}) + Dv_s(X_s^{t,x,\pi})\gamma_s(X_s^{t,x,\pi},\pi_s) \right]dW_s.
\end{align*}
\end{small}
\end{lemma}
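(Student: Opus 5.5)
The statement to prove is the Itô–Kunita–Wentzell formula (\cref{lemma:It\^{o}-kunita}) for $v \in \mathcal{C}^2_{\mathfrak{F}}$ composed with the controlled state $X^{t,x,\pi}$. I would not prove it from scratch. The plan is to reduce it to the classical generalized Itô formula of Kunita \cite{Kunita1997} for a semimartingale random field evaluated along a continuous semimartingale, and then to read off the coefficients.

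\emph{Step 1: recast $v$ as a forward semimartingale field.} By \cref{def:smooth-semimartingales},
\[
v_s(y)=v_t(y)+\int_t^s \mathfrak{d}_r v_r(y)\,dr+\int_t^s \mathfrak{d}_\omega v_r(y)\,dW_r ,
\]
with $\mathfrak{d}_t v \in \mathcal{L}^2(C_b)$ and $\mathfrak{d}_\omega v \in \mathcal{L}^2(C^1_b)$. Hence $y\mapsto v_s(y)$ is a continuous $C^2$-valued semimartingale field with local characteristics $(\mathfrak{d}_s v_s, \mathfrak{d}_\omega v_s)$. Kunita's theorem requires $C^2$-regularity of the field and $C^1$-regularity of the martingale part in $y$, together with integrability of the characteristics. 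These hold a.s. on compacts because $v\in\mathcal{L}^2(C^2_b)$ and $\mathfrak{d}_\omega v\in \mathcal{L}^2(C^1_b)$.

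\emph{Step 2: apply Kunita's formula.} The state $X=X^{t,x,\pi}$ is a continuous semimartingale with drift $b$, $dB$-coefficient $\sigma$ and $dW$-coefficient $\gamma$; this is well posed under \cref{hp:regularity_coeff}. Kunita's formula gives
\[
v_\tau(X_\tau)=v_t(x)+\int_t^\tau \mathfrak{d}_s v_s(X_s)\,ds+\int_t^\tau \mathfrak{d}_\omega v_s(X_s)\,dW_s+\int_t^\tau Dv_s(X_s)\,dX_s+\tfrac12\int_t^\tau \operatorname{tr}\bigl(D^2v_s(X_s)\,d\langle X\rangle_s\bigr)+\Bigl\langle \int_\cdot D\mathfrak{d}_\omega v(X)\,dW, X\Bigr\rangle_{t,\tau}.
\]
I then substitute $dX$ and use $d\langle X\rangle_s=\Sigma\Sigma^\top\,ds=2a_s\,ds$, which gives the trace term $\operatorname{tr}(a_s D^2 v_s)$. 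Because $B$ and $W$ are independent, the cross-variation term involves only the $\gamma\,dW$ part of $X$. It therefore equals $\int_t^\tau \gamma_s(X_s,\pi_s)\cdot D(\mathfrak{d}_\omega v)_s(X_s)\,ds$, with the componentwise contraction over the $m_W$ Brownian coordinates. Collecting the $dB$ terms and the $dW$ terms yields exactly the displayed identity.

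\emph{Step 3: localization and the quantifier ``for all $\tau$''.} Both sides are continuous in $\tau$ and the identity holds a.s. for each $\tau$, so it holds a.s. simultaneously for all $\tau$. The step I expect to be the main obstacle is verifying Kunita's hypotheses, since his theorem is usually stated for fields with continuous characteristics and suitable local boundedness. Here we only have $\mathcal{L}^2$-in-time control. I would handle this by stopping at $\tau_N=\inf\{s:|X_s|\ge N \text{ or } \int_t^s(\|\mathfrak{d}_r v_r\|_{C_b}+\|\mathfrak{d}_\omega v_r\|_{C^1_b}^2)\,dr\ge N\}$. On $[t,\tau_N]$ only compact sets matter and the characteristics are integrable, so the formula applies. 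I would then let $N\to\infty$.

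If a direct citation were insufficient, my fallback is to mollify $v$ in $y$. For the mollified field $v^\varepsilon(\cdot)=v*\rho_\varepsilon$ the formula follows from the product-type computation $d\bigl(v^\varepsilon_s(X_s)\bigr)$ via Itô's formula on a spatial Taylor expansion over a partition. I would then pass $\varepsilon\to0$ using the stated regularity and dominated convergence for both the $ds$ and the stochastic integrals.
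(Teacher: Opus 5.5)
Your proposal follows essentially the paper's route. The paper gives no separate argument and simply invokes Kunita's generalized It\^o formula. Your identification of the terms is correct: the trace term $\operatorname{tr}(a_sD^2v_s)$ comes from $d\langle X\rangle_s=2a_s\,ds$, and the correction $\gamma_s\cdot D(\mathfrak{d}_\omega v)_s$ arises only from the $\gamma\,dW$ part of $X$ by independence of $B$ and $W$.

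One point should be sharpened. The condition $v\in\mathcal{L}^2(C^2_b)$ gives $v_s\in C^2_b$ only for a.e.\ $s$, so $v$ need not be the continuous $C^2$-process that Kunita's theorem assumes, and your stopping-time localization does not address this. Your spatial-mollification fallback is what actually handles it: $v^\varepsilon_s=v_s*\rho_\varepsilon$ is a continuous $C^2$-process because $v\in\mathcal{S}^2(C_b)$, and you then pass to the limit by dominated convergence in the $ds$- and stochastic integrals. The paper leaves this step implicit in the citation.
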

In \cref{subsection_Viscosity Solutions to the Rough Stochastic Control Problems} it is necessary for us to work with versions of functionals as in \cref{def:smooth-semimartingales} with higher sample path regularity. The following claim shows that in the limit we may always pick such a version on a compact domain. 
\begin{lemma}
\label{appendix_smooth_time}
    Let $\mathbb{B}$ be a separable Banach space. Then for any $p \in [1, \infty)$ and any $H \in \mathcal{L}^p(\mathbb{B})$, there is a sequence of processes $(H^n)_{n \in \mathbb{N}}\subset \mathcal{L}^p(\mathbb{B})$ with a.s. continuous sample paths, such that $\Vert H-H^n \Vert_{\mathcal{L}^p}\to 0$. In particular $\mathcal{S}^p(\mathbb{B})$ is dense in $\mathcal{L}^{p}(\mathbb{B})$. 
\end{lemma}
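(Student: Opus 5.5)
We prove \cref{appendix_smooth_time} with a standard time-mollification argument, applied pathwise and kept predictable by using only backward-looking averages. Fix $H\in\mathcal{L}^p(\mathbb{B})$ and extend it by $0$ to $t<0$. Set
\begin{equation*}
H^n_t(\omega)\coloneqq n\int_{t-1/n}^{t} H_s(\omega)\,ds ,\qquad t\in[0,T],
\end{equation*}
where the integral is a Bochner integral in $\mathbb{B}$. It is well defined for a.e.\ $\omega$, because $\int_0^T\|H_s(\omega)\|_{\mathbb{B}}^p\,ds<\infty$ almost surely, and on the null set where this fails we put $H^n\equiv 0$.

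First we check that $H^n$ is admissible. For almost every $\omega$ the map $t\mapsto H^n_t(\omega)$ is continuous, and in fact absolutely continuous, since it is the difference of two primitives of a locally Bochner-integrable function. Measurability follows from strong measurability of $H$ and separability of $\mathbb{B}$ (Pettis). The process is adapted because $H^n_t$ only involves $H_s$ for $s\le t$. A continuous adapted process is predictable, so $H^n$ is $\mathfrak{P}$-measurable. Jensen's inequality gives $\int_0^T\|H^n_t\|^p\,dt\le\int_0^T\|H_t\|^p\,dt$ pathwise, hence $H^n\in\mathcal{L}^p(\mathbb{B})$ with $\|H^n\|_{\mathcal{L}^p}\le\|H\|_{\mathcal{L}^p}$.

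The convergence step is the main point. For fixed $\omega$ with $H(\omega)\in L^p([0,T];\mathbb{B})$, the one-sided averages $H^n(\omega)$ converge to $H(\omega)$ in $L^p([0,T];\mathbb{B})$. One way to see this is to approximate $H(\omega)$ in $L^p$ by continuous $\mathbb{B}$-valued functions, for which the averages converge uniformly, and to use the uniform contraction bound above. This is the standard approximate-identity argument and works verbatim for Bochner spaces. So $\int_0^T\|H^n_t-H_t\|^p\,dt\to0$ almost surely. The same contraction bound gives the dominating function
\begin{equation*}
\int_0^T\|H^n_t-H_t\|^p\,dt\le 2^p\int_0^T\|H_t\|^p\,dt ,
\end{equation*}
which is integrable in $\omega$. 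Dominated convergence on $\Omega$ then yields $\|H-H^n\|_{\mathcal{L}^p}\to0$.

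For the density claim $\mathcal{S}^p(\mathbb{B})\subset\mathcal{L}^p(\mathbb{B})$ dense, we still need $\sup_t\|H^n_t\|\in L^p(\Omega)$, which may fail. We therefore truncate: replace $H$ by $H\mathbbm{1}_{\{\|H\|\le k\}}$, which is predictable and converges to $H$ in $\mathcal{L}^p$ by dominated convergence. For a bounded integrand the averages satisfy $\sup_t\|H^n_t\|\le k$, so $H^n\in\mathcal{S}^p(\mathbb{B})$. A diagonal choice in $(k,n)$ finishes the proof. The only delicate point is this measurability and predictability bookkeeping of the Bochner-valued averages; the analytic convergence is routine.
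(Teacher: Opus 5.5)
Your argument is correct, but it takes a different route from the paper. The paper views $\mathcal{L}^p(\mathbb{B})$ as the Bochner space over $([0,T]\times\Omega,\mathfrak{P},\mathrm{Leb}\otimes\mathbb{P})$ and approximates $H$ by simple processes. A monotone class argument then reduces these to predictable rectangles $B_j\times(t_j,t_{j+1}]$ with $B_j\in\mathfrak{F}_{t_j}$. Finally, the time indicators are replaced by continuous functions via Urysohn's lemma and inner regularity of Lebesgue measure.

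Your backward moving average gives an explicit linear operator that is adapted by construction and is a contraction on $\mathcal{L}^p(\mathbb{B})$. The convergence then reduces to the deterministic approximate-identity argument plus dominated convergence in $\omega$. This avoids approximating $\mathfrak{P}$-sets by finite unions of predictable rectangles. It also avoids a requirement the paper leaves implicit: to keep adaptedness, the continuous substitutes for $\mathbbm{1}_{(t_j,t_{j+1}]}$ must vanish before $t_j$.

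What the paper's route buys is the finitely-valued simple-process structure. That structure makes \cref{appendix_smooth_time_space} a one-line modification: approximate the finitely many coefficients $b^n_j$ in $\mathbb{A}$. A moving average of a general $\mathbb{B}$-valued process is not $\mathbb{A}$-valued, so your method would need an extra step there, such as first passing to finitely-valued $\mathbb{A}$-valued integrands.

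Two small remarks on your write-up:
\begin{enumerate}
\item Because you extend $H$ by $0$ on $t<0$, averages of a function that is merely continuous on $[0,T]$ do not converge uniformly near $t=0$ unless it vanishes there. Take the approximating functions in $C_c((0,T);\mathbb{B})$ instead; these are still dense in $L^p([0,T];\mathbb{B})$.
\item The truncation step is unnecessary. Jensen already gives $\Vert H^n_t\Vert^p\le n\int_{t-1/n}^t\Vert H_s\Vert^p\,ds\le n\int_0^T\Vert H_s\Vert^p\,ds$ for every $t$. Hence $\mathbb{E}[\sup_t\Vert H^n_t\Vert^p]\le n\Vert H\Vert^p_{\mathcal{L}^p}<\infty$, so $H^n\in\mathcal{S}^p(\mathbb{B})$ directly, contrary to your remark that this may fail.
\end{enumerate}
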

\begin{proof}
    First of all denote by $\mathbf{S}(\mathbb{B})$ the set of simple $\mathbb{B}$-valued processes. Recall that for any $H \in \mathcal{L}^p(\mathbb{B})$ it holds $H: [0,T] \times \Omega \to \mathbb{B}$ is $\mathfrak{P}/\mathfrak{B}(\mathbb{B})$-measurable. Denoting by $X=([0,T] \times \Omega, \mathfrak{P}, \operatorname{Leb} \otimes \mathbb{P})$ the underlying measure space we see that $L^p_{X}(\mathbb{B})=\mathcal{L}^p(\mathbb{B})$ in the sense of Bochner-integration (since $\mathbb{B}$ is separable, all elements of $\mathcal{L}^{p}(\mathbb{B})$ are Bochner-measurable). By the construction of the Bochner integral this implies the existence of a sequence of simple processes of the form 
   $H^{n}= \sum_{j=0}^{N(n)} b^n_{j} \mathbbm{1}_{A^n_{j}}$
    approximating $H$ in $L^P(\mathbb{B})$ with $(b_{j}^n)_{j=0}^{N(n)}\subset \mathbb{B}$ and disjoint $(A_{j}^n)_{j=0}^{N(n)}\in \mathfrak{P}$ for any $n \in \mathbb{N}$. By a Monotone Class argument these sets can be taken to be of the form $A_{j}^{n}= B_{j}^{n} \times (t_{j}^{n} , t_{j=1}^{n}]$ for some $B_{j}^{n} \in \mathfrak{F}_{t^n_{j}}$ and $(t_{j}^n)_{j=0}^{N(n)}$ being some disjoint  deterministic partition of $[0,T]$. Therefore $\mathbf{S}(\mathbb{B})\subset \mathcal{L}^P(\mathbb{B})$ is dense w.r.t. $\Vert \cdot \Vert_{\mathcal{L}^p(\mathbb{B})}$. Thus we may rewrite the sequence $(H^{n})_{n \in \mathbb{N}}$ as
    \begin{equation*}
        H^n(t, \omega)= \sum_{j=0}^{N(n)} b^n_{j} \mathbbm{1}_{t \in (t_{j}^{n}, t_{j+1}^{n}]} \mathbbm{1}_{\omega \in B_{j}^{n}}=: \sum_{j=0}^{N(n)} H^{n}_{j}\mathbbm{1}_{t \in (t_{j}^{n}, t_{j+1}^{n}]} 
    \end{equation*}
    with $H^{n}_{j}$ being $\mathfrak{F}_{t_{j}}$-measurable $\mathbb{B}$-valued random variables. By an application of Urysohn's Lemma and using that the Lebesgue-measure is inner regular on $([0,T] , \mathfrak{B}_{[0,T]})$ we can now approximate the indicator functions by continuous functions in Lebesgue-measure, which yields the claim.
\end{proof}
Additionally we can also restrict to dense subspaces of the Banach space $\mathbb{B}$. 
\begin{lemma}
\label{appendix_smooth_time_space}
    Let $\mathbb{B}$ be a separable Banach space and $\mathbb{A} \subset \mathbb{B}$ be a dense subspace. Then for any $H \in \mathcal{L}^p(\mathbb{B})$ there is a sequence of $(H^n)_{n \in \mathbb{N}}\subset \mathcal{L}^p(\mathbb{A})$ such that $\Vert H^n - H \Vert_{\mathcal{L}^p(\mathbb{B})}\to 0$. This sequence can also be taken to have continuous sample paths almost surely. In particular $\mathcal{S}^p(\mathbb{A})$ is dense in $\mathcal{L}^{p}(\mathbb{B})$.
\end{lemma}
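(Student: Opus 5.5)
The plan is to reduce to the scalar-process case of \cref{appendix_smooth_time}, composed with a pointwise approximation of $\mathbb{B}$-valued random variables by $\mathbb{A}$-valued ones. First, \cref{appendix_smooth_time} gives, for $H\in\mathcal{L}^p(\mathbb{B})$ and $\varepsilon>0$, a simple predictable process
\begin{equation*}
H^{\varepsilon}(t,\omega)=\sum_{j=0}^{N} b_j \mathbbm{1}_{(t_j,t_{j+1}]}(t)\mathbbm{1}_{B_j}(\omega),\qquad b_j\in\mathbb{B},\ B_j\in\mathfrak{F}_{t_j},
\end{equation*}
with $\Vert H-H^{\varepsilon}\Vert_{\mathcal{L}^p(\mathbb{B})}<\varepsilon$. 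I will revisit that proof to make sure the approximating family really is of this simple form, with deterministic coefficients $b_j$ and predictable rectangles $B_j\times(t_j,t_{j+1}]$.

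Second, since $\mathbb{A}$ is dense in $\mathbb{B}$, I choose $a_j\in\mathbb{A}$ with $\Vert a_j-b_j\Vert_{\mathbb{B}}<\varepsilon/(N+1)$. I then set
\begin{equation*}
\tilde H^{\varepsilon}=\sum_{j=0}^{N} a_j\mathbbm{1}_{(t_j,t_{j+1}]}\mathbbm{1}_{B_j}.
\end{equation*}
This process is predictable and $\mathbb{A}$-valued. The intervals $(t_j,t_{j+1}]$ are disjoint, so pointwise $\Vert \tilde H^{\varepsilon}-H^{\varepsilon}\Vert_{\mathbb{B}}\le\varepsilon$. Hence $\Vert \tilde H^\varepsilon-H\Vert_{\mathcal{L}^p(\mathbb{B})}\lesssim\varepsilon$, with a constant depending only on $T$. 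Note that $\mathcal{L}^p(\mathbb{A})$ is understood with the $\mathbb{B}$-norm restricted to $\mathbb{A}$. Measurability is unproblematic because $\tilde H^\varepsilon$ takes finitely many values.

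Third, for continuity in time I replace each indicator $\mathbbm{1}_{(t_j,t_{j+1}]}(t)$ by a continuous function $\phi_j^\delta(t)$ with values in $[0,1]$, vanishing on $[0,t_j]$ and equal to $1$ on $[t_j+\delta,t_{j+1}-\delta]$. Since $\mathbbm{1}_{B_j}$ is $\mathfrak{F}_{t_j}$-measurable and $\phi_j^\delta$ vanishes up to $t_j$, the process $\phi_j^\delta(t)\mathbbm{1}_{B_j}a_j$ is continuous and adapted, hence predictable. It is still $\mathbb{A}$-valued because $\mathbb{A}$ is a linear subspace and we only form finite linear combinations. The difference to $\tilde H^\varepsilon$ is supported on a time set of Lebesgue measure at most $2(N+1)\delta$ and is bounded by $\max_j\Vert a_j\Vert$, so it vanishes in $\mathcal{L}^p$ as $\delta\to0$. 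The resulting processes have continuous bounded paths, so they lie in $\mathcal{S}^p(\mathbb{A})$. A diagonal choice of $\varepsilon,\delta\to0$ then gives the sequence and the density claim.

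The main obstacle is the first step: making rigorous that simple processes of the stated form with \emph{deterministic} coefficients $b_j$ are dense. That requires the monotone class argument, which says predictable sets are approximable in $\operatorname{Leb}\otimes\mathbb{P}$-measure by finite unions of predictable rectangles. I would invoke this as in \cref{appendix_smooth_time}, with that proof's approximation of Bochner-simple functions, rather than redo it.
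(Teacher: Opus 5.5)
Your proposal is correct and follows essentially the same route as the paper, which proves this lemma by rerunning the proof of \cref{appendix_smooth_time} with the $\mathbb{B}$-valued coefficients of the simple predictable processes replaced by nearby elements of $\mathbb{A}$, exactly as in your second step. Your explicit cutoffs $\phi_j^\delta$ (which you should also take to vanish on $[t_{j+1},T]$) are a concrete version of the paper's Urysohn/inner-regularity step for obtaining continuous paths, and since the resulting processes take values in a finite-dimensional span of elements of $\mathbb{A}$, they indeed lie in $\mathcal{S}^p(\mathbb{A})$.
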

\begin{proof}
    The proof works essentially the same as the proof of \cref{appendix_smooth_time} only that we approximate the $\mathbb{B}$-valued coefficients of the simple processes by $\mathbb{A}$-valued coefficients and then proceed as in the above proof. 
\end{proof}
\begin{coroll}
\label{approx_coroll_smooth_semimart}
    For any $v \in \mathcal{C}^2_{\mathfrak{F}}$ and $K \subset \mathbb{R}^{d}$ open bounded set there are sequences \\ $(v^{n, j})_{n},(\mathfrak{d}_{t} v^n)_{n},(\mathfrak{d}_{w} v^{n,i})_{n} \subset \mathcal{S}^{2}(\Lip(\mathbb{R}^{d}))$ for $j=0,1,2$ and $i=0,1$ such that 
    \begin{equation}
    \label{eq:appendix_convergence}
        \Vert D^{j}v - v^{n,j} \Vert_{\mathcal{L}^{2}(C(K))} , \Vert \mathfrak{d}_{t} v - \mathfrak{d}_{t} v^n \Vert_{\mathcal{L}^{2}(C(K))}, \Vert D^{i}(\mathfrak{d}_{w} v) - \mathfrak{d}_{w} v^{i,n} \Vert_{\mathcal{L}^{2}(C(K))} \stackrel{ n \to \infty}{\longrightarrow }0
    \end{equation}
     and for any $n \in \mathbb{N}$ it holds
    \begin{equation}
    \label{eq:appendix_bounds_Tietze}
    \begin{aligned}
        &\Vert v^{n,j} \Vert_{\mathcal{L}^{2}(C_{b}(\mathbb{R}^{d}))}\leq  \Vert v \Vert_{\mathcal{L}^{2}(C^2_{b}(\mathbb{R}^{d}))}+ \max_{j=0,1,2} \Vert D^{j}v - v^{n,j}\Vert_{\mathcal{L}^{2}(C(K))},  \\
        &\Vert \mathfrak{d}_{t} v^n \Vert_{\mathcal{L}^{2}(C_{b}(\mathbb{R}^{d}))}\leq \Vert \mathfrak{d}_{t} v \Vert_{\mathcal{L}^{2}(C_{b}(\mathbb{R}^{d}))}+ \Vert \mathfrak{d}_{t} v - \mathfrak{d}_{t} v^n \Vert_{\mathcal{L}^{2}(C(K))},\\
        &\Vert \mathfrak{d}_{w} v^{n,i} \Vert_{\mathcal{L}^{2}(C_{b}(\mathbb{R}^{d}))}\leq \Vert \mathfrak{d}_{w} v \Vert_{\mathcal{L}^{2}(C^{1}_{b}(\mathbb{R}^{d}))} +  \Vert D^{i}(\mathfrak{d}_{w} v) - \mathfrak{d}_{w} v^{n, i
        }\Vert_{\mathcal{L}^{2}(C(K))}.
    \end{aligned}
    \end{equation}
\end{coroll}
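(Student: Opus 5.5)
The plan is to prove \cref{approx_coroll_smooth_semimart} by applying \cref{appendix_smooth_time_space} separately to each of the finitely many component processes. Then we use a cut-off/extension procedure to turn the resulting locally accurate approximants into globally bounded Lipschitz fields whose sup-norm over $\mathbb{R}^d$ is controlled as in \eqref{eq:appendix_bounds_Tietze}.

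Since $v \in \mathcal{C}^2_{\mathfrak{F}}$, we have $D^j v \in \mathcal{L}^2(C_b(\mathbb{R}^d))$ for $j=0,1,2$, $\mathfrak{d}_t v \in \mathcal{L}^2(C_b(\mathbb{R}^d))$ and $D^i(\mathfrak{d}_\omega v)\in \mathcal{L}^2(C_b(\mathbb{R}^d))$ for $i=0,1$. Because $C_b(\mathbb{R}^d)$ is not separable, I would not work there directly. Instead I would restrict to the compact closure $\bar K$, i.e. consider each component $H$ as an element of $\mathcal{L}^2(C(\bar K))$. The space $\mathbb{B}=C(\bar K)$ is separable, and the restriction map is a contraction. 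By Stone–Weierstrass, restrictions of functions in $\Lip(\mathbb{R}^d)$ (for instance of polynomials multiplied by a fixed smooth cut-off equal to $1$ on $\bar K$) form a dense subspace $\mathbb{A}$ of $C(\bar K)$. \Cref{appendix_smooth_time_space} then yields $\mathbb{A}$-valued processes with continuous paths, which are simple in time before smoothing, and converge to $H|_{\bar K}$ in $\mathcal{L}^2(C(\bar K))$. This gives the convergence statement \eqref{eq:appendix_convergence}.

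The main obstacle is the extension to $\mathbb{R}^d$ with the bound \eqref{eq:appendix_bounds_Tietze}. This must be done measurably in $(t,\omega)$ while staying Lipschitz and keeping continuity in time. I would exploit the simple structure of the approximants. In the proof of \cref{appendix_smooth_time_space}, each approximant has the form $\sum_j H_j^n \phi_j^n(t)$, with $\phi_j^n$ continuous and $[0,1]$-valued, and with finitely many values $a_{j,k}\in \mathbb{A}$ attained by $H^n_j$. For each such value I would fix a deterministic Lipschitz extension $\tilde a_{j,k}$ to $\mathbb{R}^d$. I would then compose with the truncation $y\mapsto (y\wedge M)\vee(-M)$, where $M$ is the relevant bound (the componentwise clipping is itself Lipschitz). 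This follows the Tietze/McShane idea, which keeps the sup norm over $\mathbb{R}^d$ equal to the sup norm over $\bar K$.

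Substituting these extensions produces processes in $\mathcal{S}^2(\Lip(\mathbb{R}^d))$, since there are only finitely many deterministic Lipschitz fields combined with predictable scalar coefficients. These processes agree with the approximants on $\bar K$. Their $C_b(\mathbb{R}^d)$-norm equals their $C(\bar K)$-norm, which the triangle inequality bounds by $\|H\|_{C_b}$ plus the approximation error on $K$. Taking $\mathcal{L}^2$ norms then gives \eqref{eq:appendix_bounds_Tietze}. A convex combination in time does not increase the bound, provided the partition functions $\phi_j^n$ are chosen to sum to at most $1$. Measurability and predictability are preserved because the extension is applied valuewise to a finite-valued map.
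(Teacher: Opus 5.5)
Your proposal is correct. It differs from the paper's proof mainly in how the extension step is done.

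\textbf{The paper's route.} The paper applies \cref{appendix_smooth_time_space} once to the triple $(v,\mathfrak{d}_t v,\mathfrak{d}_w v)$ in $\mathcal{L}^2(C^2(K))\times\mathcal{L}^2(C(K))\times\mathcal{L}^2(C^1(K))$. It sets $v^{n,j}:=D^jv^n$ and $\mathfrak{d}_wv^{n,i}:=D^i(\mathfrak{d}_wv^n)$. It then extends every resulting function pointwise in $(t,\omega)$ by $\tilde g(x)=\sup_{y\in K}(g(y)-|x-y|)$, clipped at $\pm\|g\|_{C(K)}$.

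\textbf{Your route.} You approximate each of the six fields separately in the separable space $C(\bar K)$, using polynomial restrictions as the dense subspace. You then extend only the finitely many deterministic values of the simple approximants and recombine them with the continuous time weights.

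\textbf{Comparison.} The paper's route keeps $v^{n,j}=D^jv^{n,0}$ on $K$. Neither the statement nor its later use needs this, since $a^n,b^n,c^n,d^n$ are chosen separately there. Your route is more robust. The slope-one formula reproduces $g$ on $K$ only if $g$ is $1$-Lipschitz there. With a $(t,\omega)$-dependent slope instead, neither measurability nor continuity of $t\mapsto\tilde g_t$ in the $\Lip$-norm (required for $\mathcal{S}^2(\Lip(\mathbb{R}^d))$) is automatic. Your extended processes are finite combinations of fixed $\Lip$ functions with continuous adapted scalar weights. So agreement on $\bar K$, predictability, boundedness and membership in $\mathcal{S}^2(\Lip(\mathbb{R}^d))$ are immediate. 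Working in $C(\bar K)$ rather than $C(K)$ with $K$ open also makes the separability hypothesis of the lemma actually hold.

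\textbf{Two details to tighten.}
\begin{enumerate}
\item Requiring the weights to ``sum to at most $1$'' does not give your claimed norm equality. If weights overlap, $\|\sum_j\phi_j(t)E(H^n_j)\|_{C_b}$ can exceed $\|\sum_j\phi_j(t)H^n_j\|_{C(\bar K)}$. For example, take values $a$ and $-a$ with weights $1/2$ and $E(-a)=-E(a)$. The fix is to choose the $\phi^n_j$ with pairwise disjoint supports in $[t_j,t_{j+1}]$, vanishing at $t_j$. Then only one term is active at each time, so $\|\tilde H^n_t\|_{C_b(\mathbb{R}^d)}=\|H^n_t\|_{C(\bar K)}$ holds exactly, and adaptedness is also secured.
\item For the vector- and matrix-valued fields ($Dv$, $D^2v$, $\mathfrak{d}_\omega v$, $D\mathfrak{d}_\omega v$), componentwise clipping bounds only the entries. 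It loses a dimensional factor in the sharp bound \eqref{eq:appendix_bounds_Tietze}. Use instead the radial retraction $y\mapsto y\min\{1,M/|y|\}$ onto the closed ball. It is $1$-Lipschitz and preserves the sup-norm exactly.
\end{enumerate}
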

\begin{proof}
The existence of $(v^n,\mathfrak{d}_{t} v^n, \mathfrak{d}_{w} v^n) \in \mathcal{S}^{2}(C^3(K)) \times \mathcal{S}^2(C^1(K)) \times \mathcal{S}^2(C^2(K))$ satisfying \eqref{eq:appendix_convergence} with $v^{n, j}\coloneqq D^{j}v^{n}$ and $\mathfrak{d}_{\omega}v^{n, i}\coloneqq D^{i} (\mathfrak{d}_{\omega} v^{n})$ follows directly by applying \cref{appendix_smooth_time_space} to the triplet $(v, \mathfrak{d}_{t} v, \mathfrak{d}_{w} v) \in \mathcal{L}^2(C^2(K)) \times \mathcal{L}^2\left(C(K)\right) \times \mathcal{L}^2\left(C^1\left(K\right)\right) $. We then extend $g \in \{v^{n,j}_{t}(\omega, \cdot),\mathfrak{d}_{t} v^n_{t}(\omega, \cdot), \mathfrak{d}_{w} v^{n,i}_{t}(\omega, \cdot) \}$ onto $\mathbb{R}^{d}$ by setting $\tilde{g}(x)\coloneqq \sup_{y \in K}(g(y)-  |x-y|)$  and $g(x)\coloneqq ( -\Vert g \Vert_{C(K)})\vee (\tilde{g}(x) \wedge \Vert g \Vert_{C(K)}) $ for any $x \in \mathbb{R}^{d}$. This also yields the bounds \eqref{eq:appendix_bounds_Tietze}
\end{proof}
\begin{small}
\printbibliography
\end{small} 
\end{document}